\newtheorem{thm}{Theorem}
\numberwithin{thm}{section}
\numberwithin{equation}{section}
\newtheorem{theorem}[thm]{Theorem}
\newtheorem*{theorem*}{Theorem}
\newtheorem{corollary}[thm]{Corollary}
\newtheorem*{corollary*}{Corollary}
\newtheorem{lemma}[thm]{Lemma}
\newtheorem{prop}[thm]{Proposition}
\newtheorem{proposition}[thm]{Proposition}
\newtheorem*{conjecture*}{Conjecture}
\newtheorem*{question*}{Question}
\newtheorem{definition}[thm]{Definition}
\newtheorem*{definition*}{Definition}
\newtheorem*{definitions*}{Definitions}
\newtheorem*{rem*}{Remark}
\theoremstyle{remark}
\newtheorem{remark}[thm]{Remark}
\newtheorem*{remark*}{Remark}
\newtheorem*{remarks*}{Remarks}
\newtheorem*{example*}{Example}
\newtheorem*{examples*}{Examples}
\newcommand{\R}{\mathbb{R}}
\newcommand{\Z}{\mathbb{Z}}
\newcommand{\Q}{\mathbb{Q}}
\newcommand{\C}{\mathbb{C}}
\newcommand{\N}{\mathbb{N}}
\newcommand{\T}{\mathbb{T}}
\def\CP{{\mathbb C}{\mathbb P}}
\def\RP{{\mathbb R}P}
\def\PP{{\mathbb P}}
\newcommand{\ep}{\epsilon}
\newcommand{\ga}{\gamma}
\newcommand{\Ga}{\Gamma}
\newcommand{\Sp}{\text{Sp}}
\def\lg{\langle}
\def\rg{\rangle}
\def\cz{{\mu_\text{CZ}}}
\def\rcz{{\tilde\mu_\text{CZ}}}
\def\rs{{\mu_\text{RS}}}
\def\maslov{{\mu_\text{Maslov}}}
\def\czl{{\mu^-_\text{CZ}}}
\def\czu{{\mu^+_\text{CZ}}}
\def\hwz{{\mu_\text{HWZ}}}
\def\morse{{\mu_\text{Morse}}}
\def\vr{\varphi}
\def\om{\omega}
\def\wtl{\widetilde}
\def\ba{{\widebar a}}
\def\balpha{{\bar\alpha}}
\def\bbeta{{\bar\beta}}
\def\bxi{{\bar\xi}}
\def\bpi{{\bar\pi}}
\def\bvr{{\bar\varphi}}
\def\bM{{\widebar M}}
\def\bW{{\widebar W}}
\def\pr{\prime}
\def\P{{\mathcal P}}
\def\M{{\mathcal M}}
\def\g{{\mathfrak g}}
\def\J{{\mathcal J}}
\def\Jreg{{\mathcal J_{\text{reg}}}}
\def\mi{{\text{min}}}
\def\ma{{\text{max}}}
\def\u{{\widetilde u}}
\def\S{{\mathcal S}}
\def\B{{\mathcal B}}
\def\Om{\Omega}
\newcommand{\fk}{{\mathfrak{k}}}
\newcommand{\Xx}{\mathcal{X}}
\newcommand{\Nn}{\mathcal{N}}
\newcommand{\Pp}{\mathcal{P}}
\newcommand{\tgamma}{\tilde{\gamma}}
\newcommand{\rank}{rank\,}
\begin{document}
\title[Dynamical convexity and elliptic periodic orbits for Reeb flows]{Dynamical convexity and elliptic\\ periodic orbits for Reeb flows}

\author[M.~Abreu]{Miguel Abreu}
\address{Centro de An\'{a}lise Matem\'{a}tica, Geometria e Sistemas
Din\^{a}micos, Departamento de Matem\'atica, Instituto Superior T\'ecnico, Av.
Rovisco Pais, 1049-001 Lisboa, Portugal}
\email{mabreu@math.tecnico.ulisboa.pt}
 
\author[L.~Macarini]{Leonardo Macarini}
\address{Universidade Federal do Rio de Janeiro, Instituto de Matem\'atica,
Cidade Universit\'aria, CEP 21941-909 - Rio de Janeiro - Brazil}
\email{leomacarini@gmail.com}

\thanks{MA was partially funded by FCT/Portugal through projects 
PEst-OE/EEI/LA0009/2013, EXCL/MAT-GEO/0222/2012 and PTDC/MAT/117762/2010.
LM was partially supported by CNPq, Brazil.
The present work is part of the authors activities within BREUDS, 
a research partnership between European and Brazilian research groups in dynamical systems,
supported by an FP7 International Research Staff Exchange Scheme (IRSES) 
grant of the European Union.}


\begin{abstract}
A long-standing conjecture in Hamiltonian Dynamics states that the Reeb flow of any convex hypersurface in $\R^{2n}$ carries an elliptic closed orbit. Two important contributions toward its proof were given by Ekeland in 1986 and Dell'Antonio-D'Onofrio-Ekeland in 1995 proving this for convex hypersurfaces satisfying suitable pinching conditions and for antipodal invariant convex hypersurfaces respectively. In this work we present a generalization of these results using contact homology and a notion of dynamical convexity first introduced by Hofer-Wysocki-Zehnder for tight contact forms on $S^3$. Applications include geodesic flows under pinching conditions, magnetic flows and toric contact manifolds.
\end{abstract}

\maketitle

\section{Introduction}
\label{sec:intro}

Let $(M^{2n+1},\xi)$ be a closed (i.e. compact without boundary) co-oriented contact manifold. Let $\alpha$ be a contact form supporting $\xi$ and denote by $R_\alpha$ the corresponding Reeb vector field uniquely characterized by the equations $\iota_{R_\alpha} d\alpha = 0$ and $\alpha (R_\alpha) = 1$. We say that a periodic orbit $\gamma$ of $R_\alpha$ is \emph{elliptic}\footnote{Some authors rather call this an {\it elliptic-parabolic} periodic orbit .} if every eigenvalue of its linearized Poincar\'e map has modulus one.

The existence of elliptic orbits has important consequences for the Reeb flow. As a matter of fact, fundamental contributions due to Arnold, Birkhoff, Kolmogorov, Moser, Newhouse, Smale and Zehnder establish that, under generic conditions, the presence of elliptic orbits implies a rich dynamics: it forces the existence of a subset of positive measure filled by invariant tori (this implies, in particular, that the flow is not ergodic with respect to the volume measure), existence of transversal homoclinic connections and positivity of the topological entropy; see \cite{AbMar} and references therein.

The search for elliptic closed orbits goes back to Poincar\'e \cite{Po} and a long-standing conjecture in Hamiltonian Dynamics states that the Reeb flow of every convex hypersurface in $\R^{2n}$ carries an elliptic periodic orbit \cite{Eke,Lon02}. By the dynamical consequences of the existence of elliptic orbits mentioned before, this conjecture leads to deep implications: if it holds then Boltzmann's ergodic hypothesis would fail for a large class of important and natural Hamiltonian systems.

Unfortunately this is still an open question, but there are some important partial results. The first one, proved by Ekeland \cite{Eke86}, asserts the existence of an elliptic closed orbit on a regular energy level of a convex Hamiltonian in $\R^{2n}$ satisfying suitable \emph{pinching conditions} on the Hessian. The second one, due to Dell'Antonio-D'Onofrio-Ekeland \cite{DDE}, establishes that if an hypersurface in $\R^{2n}$ is convex and \emph{invariant by the antipodal map} then its Reeb flow carries an elliptic closed orbit. The proofs are based on classical variational methods and use in a strong way the hypothesis of convexity.

The goal of this work is to generalize these results for contact manifolds using \emph{contact homology} and a neck-stretching argument. In order to do this, we introduce a generalization of the notion of dynamical convexity originally defined by Hofer, Wysocki and Zender \cite{HWZ} for tight contact forms on $S^3$. Several applications are given, like the existence of elliptic closed geodesics for pinched Finsler metrics and the existence of elliptic closed orbits on sufficiently small energy levels of magnetic flows given by a non-degenerate magnetic field on a closed orientable surface of genus different from one.

Finally, it should be mentioned that, although the general machinery of contact homology is yet to be fully put on a rigorous basis \cite{HWZ10, HWZ11, HN}, the results in this paper are rigorously established, with no transversality issues. Indeed, due to special properties of the periodic orbits involved, we have to deal only with somewhere injective holomorphic curves; see Remark \ref{rmk:transversality1}.

\subsection*{Acknowledgements} 
We thank IMPA and IST for the warm hospitality during the preparation of this work. We are grateful to Gabriele Benedetti, Viktor Ginzburg and Umberto Hryniewicz for useful conversations regarding this paper. We are also grateful to the anonymous referees for a careful reading of the manuscript and many useful suggestions. Part of these results were presented by the first author at the Workshop on Conservative Dynamics and Symplectic Geometry, IMPA, Rio de Janeiro, Brazil, September 2--6, 2013 and by the second author at the Mathematical Congress of Americas, Guanajuato, Mexico, August 5-9, 2013 and the VIII Workshop on Symplectic Geometry, Contact Geometry, and Interactions, IST, Lisbon, Portugal, January 30 - February 1, 2014. They thank the organizers for the opportunity to participate in such wonderful events.

\section{Statement of the results}
\label{sec:results}

\subsection{Dynamical convexity}

In order to understand the results of this paper, it is important to define a notion of dynamical convexity for general contact manifolds in terms of the underlying contact homology. We will use contact homology in the definition although this is yet to be fully put on a rigorous basis \cite{HWZ10, HWZ11, HN}. However, our results do not have transversality issues and in many situations can be stated in terms of other invariants, like equivariant symplectic homology. (As proven in \cite{BO2}, equivariant symplectic homology is isomorphic to linearized contact homology whenever the late is well defined.)

Let us recall the basic definitions. Cylindrical contact homology $HC_*(M,\alpha)$ is an invariant of the contact structure 
$\xi:=\ker \alpha$, 
whose chain complex is generated by good periodic orbits of $R_\alpha$. This is defined whenever the periodic orbits of $R_\alpha$ satisfy 
suitable assumptions; see Section \ref{sec:CH} for details. Throughout this paper, 
\emph{we will assume that the first Chern class $c_1 (\xi)$ vanishes on $H_2 (M,\Z)$,
take the grading given by the Conley-Zehnder index and use rational coefficients}.

The Conley-Zehnder index is defined for non-degenerate periodic orbits and there are different extensions for degenerate closed orbits in the literature; see, for instance, \cite{HWZ, Lon02, RS}. We will consider here the Robbin-Salamon index and lower and upper semicontinuous extensions of the Conley-Zehnder index. We will denote them by $\rs$, $\czl$ and $\czu$ respectively (see Section \ref{sec:CZ} for precise definitions; the lower semicontinuous index coincides with those defined in \cite{HWZ,Lon99,Lon00,Lon02,LZ}).

In their celebrated paper \cite{HWZ}, Hofer, Wysocki and Zehnder introduced the notion of \emph{dynamical convexity} for tight contact forms on the three-sphere: a contact form on $S^{3}$ is called dynamically convex if every periodic orbit $\gamma$ satisfies $\czl(\gamma) \geq 3$. They proved that a convex hypersurface in $\R^{4}$ (that is, a smooth hypersurface bounding a compact convex set with non-empty interior) with the induced contact form is dynamically convex. Notice that, in contrast to convexity, dynamical convexity \emph{is invariant by contactomorphisms}. Their main result establishes that the Reeb flow of a dynamically convex contact form has global sections given by pages of an open book decomposition. The existence of such sections has several dynamical consequences like, for instance, the dichotomy between two or infinitely many periodic orbits.

In this work, we propose a generalization of the notion of dynamical convexity for general contact manifolds and show how it can be used to obtain the existence of elliptic closed orbits. To motivate our definition, let us consider initially the case of higher dimensional spheres. As observed in \cite{HWZ}, the proof that a convex hypersurface in $\R^4$ is dynamically convex works in any dimension and shows that a periodic orbit $\gamma$ on a convex hypersurface in $\R^{2n+2}$ satisfies $\czl(\gamma) \geq n+2$ (see Section \ref{sec:index_estimates} for a proof). It turns out that the term $n+2$ has an important meaning: \emph{it corresponds to the lowest degree with non-trivial contact homology for the standard contact structure $\xi_{\rm st}$ on $S^{2n+1}$}. Indeed, an easy computation shows that
\[
HC_\ast (S^{2n+1}, \xi_{\rm st}) \cong
\begin{cases}
\Q & \text{if $\ast = n+2+2k$ and $k \in \{0,1,2,3,\dots\}$} \\
0 & \text{otherwise.}
\end{cases} 
\]
In Theorem \ref{thm:convex} we show that this is a particular instance of a much more general phenomenon for toric contact manifolds. Based on this variational property, we introduce the following definition of dynamical convexity. In order to deal with contact manifolds that are not simply connected, we will make use of the fact that cylindrical contact homology has a filtration given by the free homotopy class of the periodic orbits: $HC_*(M,\xi) = \oplus_{a \in [S^1,M]} HC^a_*(M,\xi)$.

\begin{definition*}
Let $k_- = \inf\{k \in \Z; HC_k(\xi) \neq 0\}$ and $k_+ = \sup\{k \in \Z; HC_k(\xi) \neq 0\}$. A contact form $\alpha$ is positively (resp. negatively) dynamically convex if $k_-$ is an integer and $\czl(\gamma) \geq k_-$ (resp. $k_+$ is an integer and $\czu(\gamma) \leq k_+$) for every periodic orbit $\gamma$ of $R_\alpha$. Similarly, let $a$ be a free homotopy class in $M$, $k^a_- = \inf\{k \in \Z; HC^a_k(\xi) \neq 0\}$ and $k^a_+ = \sup\{k \in \Z; HC^a_k(\xi) \neq 0\}$. A contact form $\alpha$ is positively (resp. negatively) $a$-dynamically convex if $k^a_-$ is an integer and $\czl(\gamma) \geq k^a_-$ (resp. $k^a_+$ is an integer and $\czu(\gamma) \leq k^a_+$) for every periodic orbit $\gamma$ of $R_\alpha$ with free homotopy class $a$.
\end{definition*}

\begin{remark}
The motivation to consider \emph{negative} dynamical convexity comes from applications to magnetic flows; see Section \ref{sec:applications}.
\end{remark}

\subsection{Statement of the main results}
\label{sec:main_results}

Our first main result establishes that, under some hypotheses, dynamically convex contact forms have elliptic orbits with extremal Conley-Zehnder indexes. In order to state it, we need some preliminary definitions. We say that a Morse function is \emph{even} if every critical point has even Morse index. A contact manifold $(M,\xi)$ is called \emph{Boothby-Wang} if it supports a contact form $\beta$ whose Reeb flow generates a free circle action. If $M$ is Boothby-Wang then the quotient $N:=M/S^1$ is endowed with a symplectic form $\om$ given by the curvature of $\beta$ and we say that $M$ is a \emph{prequantization} of $(N,\om)$. Given a finite subgroup $G \subset S^1$ and a contact form $\alpha$ supported by a Boothby-Wang contact structure $\xi$, we say that $\alpha$ is \emph{$G$-invariant} if $(\phi_\beta^{t_0})^*\alpha=\alpha$, where $\phi_\beta^t$ is the flow of $R_\beta$ and $t_0 \in S^1$ is the generator of $G$. A periodic orbit $\ga$ of a $G$-invariant contact form $\alpha$ is called \emph{$G$-symmetric} if $\phi_\beta^{t_0}(\ga(\R))=\ga(\R)$.

\vskip .2cm
\noindent
{\bf Theorem A.}
{\it Let $(M^{2n+1},\xi)$ be a Boothby-Wang contact manifold over a closed symplectic manifold $(N,\om)$ such that $c_1(TN) = \lambda\omega$ for some constant $\lambda \in \R$. Let $\vr$ be a simple orbit of $R_\beta$ and denote by $a$ the free homotopy class of $\vr$. Assume that one of the following two conditions holds:
\begin{itemize}
\item[(H1)] $N$ admits an even Morse function, $a=0$ and $\lambda>1$.
\item[(H2)] $\om|_{\pi_2(N)}=0$ and $\pi_1(N)$ is torsion free.
\end{itemize}
Let $G$ be a finite subgroup of $S^1$ and $\alpha$ a $G$-invariant contact form supporting $\xi$. Then $\alpha$ carries a $G$-symmetric closed orbit $\tilde\gamma$ with free homotopy class $a$. Moreover, if $G$ is non-trivial and every periodic orbit $\ga$ of $\alpha$ with free homotopy class $a$ satisfies $\czl(\ga)\geq \rs(\vr)-n$ in case (H1) or $\czu(\ga) \leq \rs(\vr)+n$ in case (H2) then $\tilde\ga$ is elliptic and its Conley-Zehnder index satisfies $\czl(\tilde\ga) = \rs(\vr)-n$ in case (H1) or $\czu(\tilde\ga) = \rs(\vr)+n$ in case (H2).}
\vskip .2cm

The statement above can be nicely rephrased using dynamical convexity in the following way. Under the assumptions of Theorem A, one can construct a non-degenerate contact form $\beta^\pr$ (given by a perturbation of $\beta$) supporting $\xi$ for which cylindrical contact homology is well defined (that is, its differential $\partial$ is well defined and satisfies $\partial^2=0$) and $k^a_- = \rs(\vr)-n$ in case (H1) and $k^a_+ = \rs(\vr)+n$ in case (H2) (see Section \ref{sec:computationCH}). Thus, if $G$ is non-trivial and $\alpha$ is positively $a$-dynamically convex in case (H1) or negatively $a$-dynamically convex in case (H2) then $\alpha$ carries an elliptic $G$-symmetric closed orbit. We did not state Theorem A in this way because, as was already noticed, contact homology is yet to be fully put on a rigorous basis.

\begin{remark}
\label{rmk:strictly ellipticA}
It can be shown that $\tilde\ga$ is actually strictly elliptic, that is, $\tilde\ga$ is elliptic and every eigenvalue of its linearized Poincar\'e map different from one is Krein-definite; see Remark \ref{rmk:strictly elliptic - proof}.
\end{remark}

\begin{remark}
\label{rmk:monotone}
The hypothesis that $c_1(TN) = \lambda\omega$ for some constant $\lambda \in \R$ is equivalent to the condition that $c_1(\xi)|_{H_2(M,\Z)}=0$; see Section \ref{sec:proof main theorem}. The hypothesis that $\pi_1(N)$ is torsion free is used only to ensure that, under the hypothesis (H2), the free homotopy class of the fiber is primitive; see Section \ref{sec:Proof Thm 1}. If $\pi_r(N)=0$ for every $r\geq 2$ then (H2) is automatically satisfied; see \cite{GGM}.
\end{remark}

\begin{remark}
\label{rmk:trivialization Thm A}
As usual, the indexes of $\ga$ and $\vr$ are computed using a trivialization of $\xi$ over a closed curve representing the free homotopy class $a$ (see Section \ref{sec:CZ}). It is easy to see that the assumptions on $\alpha$ do not depend on the choice of this trivialization: the difference between $\rs(\vr)$ and $\mu_{\text{CZ}}^\pm(\tilde\ga)$ does not depend on this, see \eqref{eq:change of trivialization}.
\end{remark}

\begin{remark}
Theorem A implies, in particular, that the Weinstein conjecture holds for contact manifolds satisfying the hypotheses (H1) or (H2) using techniques of contact homology without transversality issues. Notice that $M$ does not need to admit an aspherical symplectic filling. In fact, in the proof we do not need a filling of $M$ at all. This is important because in the proof we consider the contact homology of the quotient $M/G$ which, in general, does not admit an aspherical symplectic filling.
\end{remark}

\begin{remark}
\label{rmk:transversality1}
In (H1), the hypotheses on $\lambda$ can be replaced by the weaker assumption that $\lambda>0$ if we assume that the relevant moduli spaces of pseudo-holomorphic curves used to define and prove the invariance of the contact homology can be cut out transversally. (This condition is used only to assure that in case (H1) a positively $a$-dynamically convex contact form has the property that every contractible periodic orbit has non-negative reduced Conley-Zehnder index; see Remark \ref{rmk:transversality3}.) Moreover, the argument is easier in this case, as will be explained along the proof; see Remarks \ref{rmk:transversality2} and \ref{rmk:transversality3}. However, with the hypotheses of the main theorem the proof is rigorously established, with no transversality issues. A crucial ingredient for this is the fact that the orbit used in the neck-stretching argument developed in Section \ref{sec:neck-stretching} has minimal index and action in case (H1) and maximal index and action in case (H2). This ensures that we have to deal only with somewhere injective holomorphic curves.
\end{remark}

Now, notice that $S^{2n+1}$ with the standard contact structure is Boothby-Wang and its first Chern class vanishes. Moreover, the quotient is $\CP^n$ which clearly admits an even Morse function (as any toric symplectic manifold) and the Robbin-Salamon index of the (simple) orbits of the circle action is equal to $2n+2$ (for a trivialization given by a capping disk). The action of $\Z_2 \subset S^1$ is generated by the antipodal map. Consequently we obtain the following generalization of the result proved by Dell'Antonio, D'Onofrio and Ekeland \cite{DDE} mentioned in the introduction.

\begin{corollary}
\label{cor1}
If a contact form on $S^{2n+1}$ is dynamically convex (that is, every periodic orbit $\ga$ satisfies $\czl(\ga)\geq n+2$) and invariant by the antipodal map then its Reeb flow carries an elliptic $\Z_2$-symmetric closed orbit $\ga$. Moreover, $\czl(\ga)=n+2$.
\end{corollary}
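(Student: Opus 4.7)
The plan is to derive the corollary as a direct application of Theorem~A in case (H1). First, I would recall that $(S^{2n+1}, \xi_{\rm st})$ is a Boothby--Wang manifold over $(\CP^n, \om_{FS})$: the Hopf fibration is generated by the standard $S^1$-action $e^{it} \cdot z = e^{it} z$ on $\C^{n+1}$, whose simple Reeb orbit $\vr$ is a Hopf fibre. The subgroup $\Z_2 \subset S^1$ acts by $z \mapsto -z$, which is precisely the antipodal map on $S^{2n+1}$, so a contact form is $\Z_2$-invariant in the sense of the paper if and only if it is invariant under the antipodal map.

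Next, I would verify that hypothesis (H1) of Theorem~A holds in this setting. The condition $c_1(\xi_{\rm st})|_{H_2(S^{2n+1},\Z)} = 0$ is automatic since $H_2(S^{2n+1},\Z) = 0$. The base $\CP^n$ is toric, hence admits an even Morse function (for instance, the standard moment map, whose critical points are the torus fixed points and carry even Morse indices); it is simply connected, so $a = 0$; and it is monotone, $\om_{FS}$ being a positive multiple of $c_1(T\CP^n)$, so the condition relating $\om$ and $c_1(TN)$ is automatic. Computing the Robbin--Salamon index of the Hopf fibre in the trivialization of $\xi_{\rm st}$ induced by a capping disk whose image in $\CP^n$ is a projective line $\CP^1$, one finds $\rs(\vr) = 2\lg c_1(T\CP^n), [\CP^1]\rg = 2(n+1) > 2$, since $c_1(T\CP^n) = (n+1)H$ by the Euler sequence.

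Finally, I would apply Theorem~A with $G = \Z_2$. The index threshold provided by the theorem becomes $\rs(\vr) - n = n + 2$, which by the discussion following Theorem~A equals $k^0_-$, the lowest degree with non-trivial cylindrical contact homology in the trivial free homotopy class; this matches the computation of $HC_\ast(S^{2n+1},\xi_{\rm st})$ quoted in the introduction. The hypothesis of the corollary that $\czl(\tilde\ga) \geq n+2$ for every periodic orbit $\tilde\ga$ is therefore precisely positive $0$-dynamical convexity (recall that every orbit is contractible, as $S^{2n+1}$ is simply connected). Since $\Z_2$ is non-trivial, Theorem~A produces an elliptic $\Z_2$-symmetric closed orbit $\ga$ with $\czl(\ga) = \rs(\vr) - n = n+2$, which is exactly the conclusion of the corollary. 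I do not expect a genuine obstacle here: the entire substance is packaged into Theorem~A, and the role of the corollary is to recognize that its hypotheses become transparent and that the HWZ notion of dynamical convexity on $S^{2n+1}$ is a special case of the general notion introduced in the paper.
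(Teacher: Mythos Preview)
Your proposal is correct and follows essentially the same route as the paper: the corollary is deduced directly from Theorem~A in case (H1) by observing that $S^{2n+1}$ is Boothby--Wang over the monotone toric manifold $\CP^n$, that $\rs(\vr)=2n+2>2$ for a capping-disk trivialization, and that the $\Z_2\subset S^1$-action is the antipodal map. The paper's justification preceding the corollary is exactly this verification, so there is nothing to add.
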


\begin{remark}
Clearly the $\Z_2$-invariance of the contact form in the previous corollary can be generalized to $\Z_m$-invariance for any $m \geq 2$. This means that if a starshaped hypersurface $S$ in $\R^{2n+2}$ is dynamically convex and invariant by the map $z \mapsto e^{2\pi i/m}z$ then it carries an elliptic $\Z_m$-symmetric closed characteristic. If $S$ is convex then this result follows from a generalization of \cite{DDE} due to Arnaud \cite{Arn}. Moreover, Long and Zhu \cite{LZ} proved the existence of an elliptic closed characteristic on a convex hypersurface $S$ in $\R^{2n+2}$, without any hypothesis of invariance, under the restrictive assumption that its Reeb flow carries only finitely many simple periodic orbits. Under more restrictive assumptions, several results have been obtained concerning the multiplicity of the elliptic periodic orbits; see, for instance, \cite{Lon00,Lon02,Wan13,Wan14} and references therein.
\end{remark}

\begin{remark}
It can be shown that if $S$ is a convex hypersurface in $\R^{2n+2}$ then the elliptic periodic orbit given by the previous corollary is simple.
\end{remark}

Our second main result shows that the hypothesis of non-triviality of $G$ used to ensure that the periodic orbit given by Theorem A is elliptic can be relaxed once we impose certain bounds on the Conley-Zehnder indexes of some high iterate of the periodic orbits.

\vskip .2cm
\noindent
{\bf Theorem B.}
{\it Let $(M^{2n+1},\xi)$ be a contact manifold satisfying the hypotheses of Theorem A and $\alpha$ a contact form supporting $\xi$. Suppose that there exists an integer $k>1$ such that every periodic orbit $\ga$ of $\alpha$ with free homotopy class $a$ satisfies $\czl(\ga)\geq \rs(\vr)-n$ and $\czl(\ga^k)\geq k\rs(\vr)-n$ in case (H1) or $\czu(\ga) \leq \rs(\vr)+n$ and $\czu(\ga^k) \leq k\rs(\vr)+n$ in case (H2). Then $\alpha$ carries a elliptic periodic orbit $\tilde\ga$. Moreover, $\czl(\tilde\gamma)=\rs(\vr)-n$ in case (H1) and $\czu(\tilde\gamma)=\rs(\vr)+n$ in case (H2).}
\vskip .2cm

\begin{remark}
\label{rmk:strictly ellipticB}
As in Remark \ref{rmk:strictly ellipticA}, we actually have that $\tilde\ga$ is strictly elliptic.
\end{remark}

The hypothesis in Theorem B has the following subtle homological meaning. In dynamical convexity, the inequalities $\cz(\ga) \geq k_-$ and $\cz(\ga) \leq k_+$ are obvious necessary conditions for a non-degenerate closed orbit $\ga$ being homologically essential: clearly $\ga$ cannot contribute to the contact homology if one of these inequalities is violated due to the very definition of $k_-$ and $k_+$. On the other hand, as explained in Section \ref{sec:computationCH}, under the assumptions of Theorem A, the contact homology of a prequantization $M$ of a closed symplectic manifold $(N,\om)$ is given by copies of the singular homology of $N$ with a shift in the degree. More precisely, we have the isomorphism
\[
HC_{*}(M) \cong \oplus_{k \in \N} H_{*+ n - k\rs(\vr)}(N),
\]
see \eqref{eq:HC-1}. Thus, the inequalities $\cz(\ga^k)\geq k\rs(\vr)-n$ and $\cz(\ga^k) \leq k\rs(\vr)+n$ are obvious necessary conditions for $\ga^k$ being homologically essential \emph{in the $k$-th copy of $H_*(N)$ in the contact homology}.

Now, let $M$ be an hypersurface in $\R^{2n+2}$ such that $M=H^{-1}(1/2)$, where $H: \R^{2n+2} \to \R$ is a convex Hamiltonian homogeneous of degree two. As defined in \cite{Eke,Eke86},  we say that $M$ is \emph{$(r,R)$-pinched}, with $0<r\leq R$, if
\[
\|v\|^2R^{-2} \leq \lg d^2H(x)v,v \rg \leq \|v\|^2r^{-2}
\]
for every $x \in M$ and $v \in \R^{2n+2}$, where $d^2H(x)$ is the Hessian of $H$ at $x$. Using results due to Croke and Weinstein \cite{CW} and lower estimates for the Conley-Zehnder index, we can show that if $M$ is $(r,R)$-pinched with $\frac{R}{r} < \sqrt{\frac{k}{k-1}}$ for some real number $k>1$ then every periodic orbit $\ga$ of $M$ satisfies $\czl(\ga^{\lfloor k \rfloor}) \geq \lfloor k \rfloor(2n+2)-n$, where $\lfloor k \rfloor = \max\{j \in \Z \mid j \leq k\}$ (note that when $k\to 1$ we conclude that every convex hypersurface is dynamically convex); see Section \ref{sec:index_estimates}. Thus, we derive the following result mentioned in the introduction due to Ekeland \cite{Eke86,Eke}.

\begin{corollary}
\label{cor2}
Let $M$ be a $(r,R)$-pinched convex hypersurface in $\R^{2n+2}$ with $R/r < \sqrt{2}$. Then $M$ carries an elliptic closed Reeb orbit $\ga$ such that $\czl(\ga)=n+2$.
\end{corollary}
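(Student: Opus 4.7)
The plan is to reduce Corollary \ref{cor2} to Theorem B applied to the standard contact sphere. Parametrise $M$ as $H^{-1}(1/2)$ for a convex $2$-homogeneous Hamiltonian $H$ and regard the induced contact form $\alpha$ as a contact form on $(S^{2n+1},\xi_{\mathrm{st}})$. This contact sphere is the Boothby-Wang prequantization of the monotone, simply connected, toric manifold $(\CP^n,\om_{\mathrm{FS}})$; the simple Hopf orbit $\vr$ has $\rs(\vr)=2n+2>2$ with respect to a capping disk, so hypothesis (H1) of Theorem A holds with $a=0$ and $\rs(\vr)-n=n+2$.

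First I would assume the strict inequality $R/r<\sqrt 2$ and apply Theorem B with $k=2$. The first required bound, $\czl(\ga)\geq n+2$ for every periodic orbit, is the dynamical convexity of convex hypersurfaces recalled in the introduction (and proved in Section \ref{sec:index_estimates}). For the iterated bound $\czl(\ga^2)\geq 2(2n+2)-n=3n+4$, note that $R/r<\sqrt 2$ allows us to choose a real number $k$ slightly larger than $2$ satisfying $R/r<\sqrt{k/(k-1)}$; since $\lfloor k\rfloor=2$, the index estimate of Section \ref{sec:index_estimates} delivers the desired inequality. Theorem B then produces an elliptic orbit $\ga$ with $\czl(\ga)=n+2$.

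To treat the boundary case $R/r=\sqrt 2$ --- which I expect to be the only genuinely delicate step --- I pass to a perturbation. Set $H_\epsilon := H + \tfrac{\epsilon}{2}\|\cdot\|^2$ and $M_\epsilon := H_\epsilon^{-1}(1/2)$: then $H_\epsilon$ is convex and $2$-homogeneous, and a short computation shows $M_\epsilon$ is $(r_\epsilon,R_\epsilon)$-pinched with $R_\epsilon/r_\epsilon = \sqrt{(r^{-2}+\epsilon)/(R^{-2}+\epsilon)}<\sqrt 2$ for all $\epsilon>0$. The previous paragraph produces elliptic orbits $\ga_\epsilon\subset M_\epsilon$ with $\czl(\ga_\epsilon)=n+2$. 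Their actions are uniformly controlled from above by (a multiple of) the Ekeland-Hofer capacity of the enclosed convex body, which is continuous in $\epsilon$, and bounded below by the systolic capacity of $M$, so an Arzel\`a-Ascoli argument extracts a limit orbit $\ga\subset M$ of $R_\alpha$. Ellipticity persists in the limit because the subset of $\Sp(2n)$ whose spectrum lies on the unit circle is closed, while combining lower semicontinuity of $\czl$ with the dynamical convexity of $M$ forces $n+2\leq\czl(\ga)\leq\liminf_\epsilon \czl(\ga_\epsilon)=n+2$. The heavy lifting has already been done inside Theorem B and Section \ref{sec:index_estimates}, so apart from this compactness argument the proof is a direct application.
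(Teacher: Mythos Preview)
Your argument is essentially the paper's own: in the strict case you apply Theorem~B with $k=2$ using the index estimate~\eqref{eq:index_estimate}, and in the borderline case you perturb and pass to a limit, exactly as the paper indicates (cf.\ the remark following the corollary and the proof of Theorem~\ref{thm:elliptic geodesic S^2}). Your explicit perturbation $H_\epsilon$ and the concluding sandwich $n+2\leq\czl(\ga)\leq\liminf\czl(\ga_\epsilon)=n+2$ (using convexity of $M$ for the lower bound and lower semicontinuity for the upper) are nice details the paper leaves implicit.

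One small imprecision: the uniform upper bound on the action of $\ga_\epsilon$ does not follow from the Ekeland--Hofer capacity as you suggest, since that capacity controls the \emph{minimal} action on $M_\epsilon$, while Theorem~B may in principle return a different orbit of index $n+2$ with larger action. The correct source of the bound is internal to the proof of Theorem~\ref{Thm 1}: the orbit produced there has action below the threshold $T$ fixed in the neck-stretching argument, and $T$ depends only on a constant $c_+$ with $c_+\bbeta>\balpha_\epsilon$, which can be chosen uniformly as $\epsilon\to 0$. With this replacement your compactness step goes through.
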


\begin{remark}
It can be shown that the elliptic orbit given by the previous corollary is simple.
\end{remark}

\subsection{Applications}
\label{sec:applications}

The hypothesis of non-triviality of $G$ in Theorem A is probably just technical, but it is crucial in our argument. However, in many applications this condition is automatically achieved due to topological reasons. Let us start illustrating this with an application to geodesic flows on $S^2$. Let $F$ be a Finsler metric on $S^2$ with reversibility $r := \max\{F(-v) \mid v \in TM,\ F(v)=1\}$. It is well known that the geodesic flow of any Finsler metric on $S^2$ lifts to a Hamiltonian flow on a suitable star-shaped hypersurface in $\R^4$ via a double cover, see \cite{CO} (the sphere bundle $SS^2$ is contactomorphic to $\RP^3$ with the contact structure induced by $S^3$ tight). It is proved in \cite{HP} that, if the flag curvature $K$ of $F$ satisfies the pinching condition $(r/(r+1))^2 < K \leq 1$, then the lift of the geodesic flow to $S^3$ is dynamically convex. We refer to \cite{Rad04} for the precise definitions; we just want to mention here that if $F$ is a Riemannian metric then $r=1$, $K$ is the Gaussian curvature and the pinching condition reads as the classical assumption $1/4 < K \leq 1$. Thus, we have the following theorem. It was proved before by Rademacher \cite{Rad07} using different methods and assuming that the pinching condition is strict.

\begin{theorem}
\label{thm:elliptic geodesic S^2}
Let $F$ be a Finsler metric on $S^2$ with reversibility $r$ and flag curvature $K$ satisfying $(r/(r+1))^2 \leq K \leq 1$. Then $F$ carries an elliptic closed geodesic $\gamma$ whose Morse index satisfies $\morse(\ga)\leq 1$. Moreover, if $(r/(r+1))^2 < K \leq 1$ then $\ga$ is prime and $\morse(\ga)=1$.
\end{theorem}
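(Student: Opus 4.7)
The plan is to reduce the theorem, via the standard double cover $\pi\colon S^3 \to SS^2 \cong \RP^3$, to an application of Corollary \ref{cor1}. The Finsler geodesic flow on $SS^2$ lifts through $\pi$ to a Reeb flow for a contact form $\alpha$ on $(S^3,\xi_{\rm st})$, and since the deck transformation is the antipodal map, $\alpha$ is automatically $\Z_2$-invariant. Under the strict pinching assumption $(r/(r+1))^2 < K \leq 1$, the theorem of Harris-Paternain \cite{HP} cited above tells us that $\alpha$ is dynamically convex. Corollary \ref{cor1} then produces an elliptic $\Z_2$-symmetric closed Reeb orbit $\tilde\ga$ on $S^3$ with $\czl(\tilde\ga) = n+2 = 3$.

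I would then push $\tilde\ga$ back down to $S^2$. Being antipodally invariant as a set, it projects to a closed orbit $\bga$ of the geodesic flow on $\RP^3 \cong SS^2$ of half the period, which in turn is the orbit lift of a closed geodesic $\ga$ on $S^2$. The linearized Poincar\'e map of $\tilde\ga$ is, under the canonical identification of contact planes on the cover, the square of that of $\bga$, so ellipticity transfers to $\bga$ and hence to $\ga$. To translate $\czl(\tilde\ga)=3$ into a bound on $\morse(\ga)$, I would invoke the classical formula $\morse(c) = \cz(c) - 1$ for closed geodesics on a surface with the natural geodesic-flow trivialization, applied to the second iterate. Under the double cover, $\czl(\tilde\ga)$ coincides with $\cz(\ga^2)$, giving $\morse(\ga^2) = 2$. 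Bott's iteration inequality for elliptic simple orbits on a surface, namely $\morse(\ga^2) \geq 2\morse(\ga)$ (see Long \cite{Lon02}), then yields $\morse(\ga) \leq 1$. Primeness under strict pinching follows from dynamical convexity: if $\ga$ were a $j$-fold iterate for some $j \geq 2$, its lift would satisfy $\czl \geq 3j$ by the iteration monotonicity for dynamically convex forms, contradicting $\czl(\tilde\ga) = 3$. The equality $\morse(\ga)=1$ is then ruled in, since $\morse(\ga)=0$ together with $\morse(\ga^2)=2$ is incompatible with the Bott rotation-angle formula for a non-trivial simple elliptic orbit.

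For the non-strict case $(r/(r+1))^2 \leq K \leq 1$ I would argue by perturbation. Approximate $F$ by Finsler metrics $F_\epsilon$ satisfying the strict pinching condition with $F_\epsilon \to F$ in $C^\infty$ as $\epsilon\to 0$. Each $F_\epsilon$ supplies a prime elliptic geodesic $\ga_\epsilon$ with $\morse(\ga_\epsilon)=1$, and the crucial observation is that the lengths $L(\ga_\epsilon)$ are uniformly bounded: the equality $\czl(\tilde\ga_\epsilon) = 3$ combined with the pinching hypothesis (via classical length-to-index comparisons for geodesics on pinched Finsler surfaces) pins the action within a universal window depending only on the pinching data. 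Extracting a $C^\infty$-convergent subsequence produces a closed geodesic $\ga$ of $F$ which is elliptic by the closedness of the eigenvalue condition on the Poincar\'e map, and which satisfies $\morse(\ga)\leq 1$ by upper semicontinuity of the Morse index along smooth limits of geodesics.

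The main obstacle will be making precise the index comparison in the second step: verifying that $\czl(\tilde\ga)$, computed on $(S^3,\xi_{\rm st})$ with the standard trivialization, agrees with $\cz(\ga^2)$ computed with the geodesic-flow trivialization on $SS^2$, so that the identity $\morse(\ga^2)=\czl(\tilde\ga)-1$ really is valid. The secondary obstacle is the uniform length estimate $L(\ga_\epsilon)\leq C$ needed for the limiting argument, which requires a genuine geometric input tying the fixed Conley-Zehnder index to the period under the pinching assumption.
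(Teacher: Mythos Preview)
Your overall strategy---lift to $S^3$, apply Corollary \ref{cor1}, project back---is the right one, and the perturbation scheme for the non-strict case is also what the paper does. But there is a genuine gap in the index step, and it is exactly the ``main obstacle'' you flagged.

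\textbf{The Bott step fails.} The inequality $\morse(\ga^2) \geq 2\,\morse(\ga)$ that you invoke is not a valid Bott iteration inequality, even for elliptic orbits on a surface. In one transversal degree of freedom the Bott index function $\Lambda$ on $S^1$ satisfies $|\Lambda(1)-\Lambda(-1)|\leq 1$ (the splitting numbers are bounded by the algebraic multiplicity, which is $1$), so the sharpest general bound is $\morse(\ga^2)=\Lambda(1)+\Lambda(-1)\geq 2\,\morse(\ga)-1$. Moreover, with the correct identification of trivializations one has $\morse(\ga^2)=\czl(\tilde\ga)=3$, not $2$ (the relation the paper uses and cites is $\morse(c)=\czl(c;\Phi)$ for the vertical trivialization $\Phi$, with no ``$-1$'' shift, and the capping-disk trivialization on $S^3$ coincides with $\Phi^2$). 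So from Corollary \ref{cor1} as a black box you only obtain $\morse(\ga)\leq 2$, which is not the claimed bound.

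\textbf{How the paper avoids this.} The paper does not use Corollary \ref{cor1} directly but rather the finer Theorems \ref{Thm 1} and \ref{Thm 2}, which produce the orbit $\ga$ \emph{on the quotient} $\bar M=SS^2\cong\RP^3$ together with $\czl(\ga;\Psi)=-1$ in the fibre trivialization $\Psi$. A single trivialization change then gives $\czl(\ga;\Phi)=\czl(\ga;\Psi)+2=1$, hence $\morse(\ga)=1$ immediately---no iteration argument is needed. In other words, the information you need lives on $\RP^3$, not on $S^3$, and it is lost once you pass through $\tilde\ga$ and then try to halve the index.

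Two smaller points: your primeness bound ``$\czl\geq 3j$'' is too strong (dynamical convexity does not give linear growth with slope $3$); the paper instead uses the mean-index inequality $|\czl-\Delta|\leq 1$ together with the observation that $j$ must be odd and hence $j\geq 3$ since $\ga$ is noncontractible in $SS^2$. And for the non-strict case, the uniform period bound comes directly from the construction in Theorem \ref{Thm 1} (the orbit has action $<T$), rather than from a separate length-index comparison.
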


The case where the inequality $(r/(r+1))^2 \leq K$ is not strict can be dealt with using a perturbation argument; see Section \ref{sec:proof applications} for details.

Applications to geodesic flows in higher dimensions are given by the next two results. Consider a Finsler metric $F$ on $S^{n+1}$ with $n\geq 1$, reversibility $r$ and flag curvature $K$. An easy computation of the contact homology of the unit sphere bundle $SS^{n+1}$ shows that $k_-=\rs(\vr)-n=n$, where $\vr$ is a prime closed geodesic of the round metric and $\rs(\vr)$ is computed using a suitable trivialization. Results due to Rademacher \cite{Rad04} establish that if $F$ satisfies the pinching condition $(r/(r+1))^2 < K \leq 1$ then every closed geodesic $\ga$ of $F$ satisfies $\czl(\ga)\geq n$. In other words, this pinching condition implies that the contact form defining the geodesic flow is positively dynamically convex. Moreover, $SS^{n+1}$ is a prequantization of the Grassmannian of oriented two-planes $G^+_2(\R^{n+2})$ which admits an even Morse function and is monotone with constant of monotonicity $\lambda>1$; see Section \ref{sec:proof applications} for details. Therefore, we get the following result which generalizes \cite[Theorem A(ii)]{BTZ}. It was also proved by Rademacher \cite{Rad07} using different methods and assuming that the pinching condition is strict.  In the statement, $K_-$ and $K_+$ are continuous functions defined on the unit sphere bundle of $\RP^{n+1}$ defined as $K_-(x,v) = \min_{\sigma \subset T_x\RP^{n+1}} K(\sigma,v)$ and $K_+(x,v) = \max_{\sigma \subset T_x\RP^{n+1}} K(\sigma,v)$, where $\sigma$ runs over all the planes in $T_x\RP^{n+1}$ and $K(\sigma,v)$ is the flag curvature; see Section \ref{sec:proof elliptic geodesic S^2} for details. Notice that $K_-=K_+$ if $n=1$.

\begin{theorem}
\label{thm:elliptic geodesic RP^n}
Let $F$ be a Finsler metric on $\RP^{n+1}$ with $n\geq 1$, reversibility $r$ and flag curvature $K$ satisfying $(r/(r+1))^2 \leq K \leq 1$. If $K$ is not strictly bigger than $(r/(r+1))^2$, suppose that $K_-(x,v)/K_+(x,v) > (r/(r+1))^2$ for every $(x,v)$ in the unit sphere bundle of $\RP^{n+1}$. Then $F$ carries an elliptic closed geodesic $\gamma$. Moreover, $\ga$ is prime and $\morse(\gamma) = 0$.
\end{theorem}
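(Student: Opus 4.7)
The plan is to apply Theorem~A (case (H1)) with $M = SS^{n+1}$ and $G = \Z_2$. Pulling back $F$ along the double cover $\pi\colon S^{n+1} \to \RP^{n+1}$ gives a $\Z_2$-invariant Finsler metric $\tilde F$ on $S^{n+1}$, whose geodesic flow is a $\Z_2$-invariant Reeb flow on the unit sphere bundle $SS^{n+1}$, with the $\Z_2$-action the free lift of the antipodal map. First I would verify the contact-geometric setup: $(SS^{n+1}, \xi_{\mathrm{st}})$ is the Boothby-Wang prequantization of the Grassmannian of oriented $2$-planes $(G_2^+(\R^{n+2}), \omega)$, which is a simply-connected monotone symplectic manifold (a complex quadric in $\CP^{n+1}$) admitting an even Morse function (e.g.\ from its cell decomposition as a complex quadric); $c_1(\xi_{\mathrm{st}})|_{H_2(SS^{n+1},\Z)} = 0$ by the prequantization construction; and the simple Reeb orbit $\vr$ of the round Boothby-Wang form, a great circle, is contractible with $\rs(\vr) = 2n$ in the capping-disk trivialization (consistent with $k_- = \rs(\vr) - n = n$ indicated in the excerpt). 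Hypothesis (H1) thus applies for $n \geq 2$.

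Next I would invoke Rademacher's index estimates \cite{Rad04}: under strict pinching $(r/(r+1))^2 < K \leq 1$, every closed Reeb orbit $\gamma$ of the induced contact form $\alpha$ satisfies $\czl(\gamma) \geq n = \rs(\vr) - n$, so $\alpha$ is positively $0$-dynamically convex. Since $G = \Z_2$ is non-trivial, Theorem~A produces a $\Z_2$-symmetric elliptic closed orbit $\gamma$ of $\alpha$ with $\czl(\gamma) = n$. Because $\Z_2$ acts freely and $\gamma(\R)$ is $\Z_2$-invariant, $\gamma$ descends to a closed geodesic $\bar\gamma$ of $F$ on $\RP^{n+1}$ of half the period; ellipticity descends because the projection intertwines the linearized return maps (the return map of $\bar\gamma$ being a square root of that of $\gamma$). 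The standard relation $\czl(\gamma) = \morse(\gamma) + n$ between the Conley-Zehnder index and the loop-space Morse index for geodesic flows on an $(n{+}1)$-manifold yields $\morse(\gamma) = 0$, which descends to $\morse(\bar\gamma) = 0$. For primality: by Remark~\ref{rmk:transversality1}, $\gamma$ has minimal action in its free homotopy class; were $\bar\gamma = \eta^k$ with $k \geq 2$, the lift of $\eta$ to $S^{n+1}$ would be a closed Reeb orbit of $\alpha$ in the same (trivial) free homotopy class as $\gamma$ but of strictly smaller action, contradicting minimality.

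The non-strict case $(r/(r+1))^2 = K$ is handled by a routine perturbation, exactly as in the remark after Corollary~\ref{cor2}: perturb $F$ slightly to satisfy strict pinching, extract elliptic orbits with uniformly bounded period, and pass to the limit.

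The principal obstacle I foresee is the borderline case $n = 1$ (i.e.\ $\RP^2$), where $\rs(\vr) = 2$ violates the strict inequality $\rs(\vr) > 2$ required by (H1), so Theorem~A does not apply directly. For $\RP^2$ I would instead apply Theorem~\ref{thm:elliptic geodesic S^2} to the $\Z_2$-invariant lifted Finsler metric $\tilde F$ on $S^2$ and then argue, either via a $\Z_2$-equivariant refinement of its proof or via a uniqueness/minimality property of the produced elliptic orbit combined with the $\Z_2$-symmetry of $\tilde F$, that the elliptic geodesic thereby obtained is $\Z_2$-symmetric and so descends to the desired orbit on $\RP^2$. Making this descent rigorous, along with nailing down the precise trivialization convention underlying $\rs(\vr) = 2n$ and the action-minimality property used for primality, are the main technical points requiring care.
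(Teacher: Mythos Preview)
Your overall strategy—apply Theorem~A in case (H1) with $M=SS^{n+1}$, $G=\Z_2$, $N=G_2^+(\R^{n+2})$, verify monotonicity and the even Morse function on the quadric, and invoke Rademacher's index bounds for dynamical convexity—is exactly the paper's approach. However, three of your steps contain genuine gaps.

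\textbf{The case $n=1$.} Your proposed fix (apply Theorem~\ref{thm:elliptic geodesic S^2} to $\tilde F$ and hope the resulting elliptic geodesic is $\Z_2$-symmetric) does not work: nothing in that theorem forces invariance under the antipodal map. The paper's solution is different and clean: since $S\RP^2=S^3/\Z_4$, one lifts the geodesic flow of $F$ on $\RP^2$ to a $\Z_4$-invariant Reeb flow on $S^3$, which is dynamically convex by the same estimates, and then applies Theorem~A with $M=S^3$ and $G=\Z_4$ (not $\Z_2$). This circumvents the failure of $\rs(\vr)>2$ on $SS^2$.

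\textbf{The Morse index.} Your relation $\czl(\gamma)=\morse(\gamma)+n$ is incorrect. In the capping-disk (= vertical) trivialization on $SS^{n+1}$ one has $\czl(\gamma;\Phi)=\morse(\gamma)$, so Theorem~A yields $\morse(\gamma)=n$ on $S^{n+1}$, not $0$; and $\morse(\gamma)=n$ does \emph{not} descend to $\morse(\bar\gamma)=0$ on $\RP^{n+1}$ (indeed the round great circle has Morse index $n$ on $S^{n+1}$ while its projection has Morse index $0$). The paper instead works directly on $\bar M=S\RP^{n+1}$ via Theorems~\ref{Thm 1} and~\ref{Thm 2}, obtaining an orbit $\bar\gamma$ with $\czl(\bar\gamma;\Psi)=-n$ in the fibre-frame trivialization $\Psi$, and then carefully compares $\Psi$ with the vertical trivialization $\Phi$ (or $\Phi'$ when $\RP^{n+1}$ is non-orientable) to get $\czl(\bar\gamma;\Phi)=0=\morse(\bar\gamma)$. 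The orientable/non-orientable distinction is essential and absent from your sketch.

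\textbf{Primality.} Remark~\ref{rmk:transversality1} refers to the minimal action of the orbit $\gamma^\pm_{\min}$ of the auxiliary form $\bar\beta'_\pm$, not of $\bar\alpha$; nothing in the proof guarantees that the orbit of $\bar\alpha$ produced has minimal action. The paper's argument is simpler: once $\morse(\bar\gamma)=0$, if $\bar\gamma=\psi^k$ then $\morse(\psi)\le\morse(\bar\gamma)=0$ and $\psi$ is still elliptic, so one may replace $\bar\gamma$ by $\psi$.
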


\begin{remark}
The hypothesis that $K_-(x,v)/K_+(x,v) > (r/(r+1))^2$ allows us to make a $C^\infty$-perturbation of $F$ such that the flag curvature $K'$ of the new metric $F'$ satisfies $(r/(r+1))^2 < K' \leq 1$; see Proposition \ref{prop:perturbation}. This, together with a uniform bound for the period of the closed orbit given by Theorem A, enables us to deal with the case that the pinching condition is not strict. An analogous assumption appears in the next theorem by the same reason.
\end{remark}

Under the stronger pinching condition $\frac{9}{4}(r/(r+1))^2 < K \leq 1$, Rademacher \cite{Rad04} proved that every closed geodesic $\ga$ of $F$ satisfies $\czl(\ga^2)\geq 3n = 2\rs(\vr)-n$. Hence, we have the following generalization of \cite[Theorem A(i)]{BTZ} proved before by Rademacher \cite{Rad07} using different methods and assuming that the pinching condition is strict. Similarly to the previous theorem, in the statement $K_-$ and $K_+$ are defined as $K_-(x,v) = \min_{\sigma \subset T_xS^{n+1}} K(\sigma,v)$ and $K_+(x,v) = \max_{\sigma \subset T_xS^{n+1}} K(\sigma,v)$, where $\sigma$ runs over all the planes in $T_xS^{n+1}$.

\begin{theorem}
\label{thm:elliptic geodesic S^n}
Let $F$ be a Finsler metric on $S^{n+1}$ with $n\geq 1$, reversibility $r$ and flag curvature $K$ satisfying $\frac{9}{4}(r/(r+1))^2 \leq K \leq 1$. If $K$ is not strictly bigger than $\frac{9}{4}(r/(r+1))^2$, suppose that $K_-(x,v)/K_+(x,v) > \frac{9}{4}(r/(r+1))^2$ for every $(x,v)$ in the unit sphere bundle of $S^{n+1}$. Then $F$ carries an elliptic closed geodesic $\gamma$ such that $\morse(\gamma) \leq n$. Moreover, if $\frac{9}{4}(r/(r+1))^2 < K \leq 1$ then $\ga$ is prime and $\morse(\gamma) = n$.
\end{theorem}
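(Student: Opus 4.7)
The plan is to reduce to Theorem~B applied to the Reeb flow on the unit tangent bundle $SS^{n+1}$, viewed as the Boothby--Wang prequantization of the Grassmannian $N=G_2^+(\R^{n+2})$ of oriented 2-planes. I would first verify hypothesis (H1) of Theorem~A for this setup. The base $N$ is a compact Hermitian symmetric space, hence K\"ahler--Einstein and monotone; a generic component of the moment map of a Hamiltonian torus action on $N$ gives a perfect Morse function whose critical points all have even index (by Frankel's theorem); closed geodesics on $S^{n+1}$ are contractible; the vanishing $c_1(\xi)|_{H_2(SS^{n+1},\Z)}=0$ is recorded in the paragraph preceding Theorem~\ref{thm:elliptic geodesic RP^n}; and a simple Reeb orbit $\vr$ corresponding to a great circle of the round metric has $\rs(\vr)=2n>2$ once $n\geq 2$. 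The case $n=1$ is subsumed by Theorem~\ref{thm:elliptic geodesic S^2} (which applies since $\tfrac{9}{4}(r/(r+1))^2\leq K$ implies $(r/(r+1))^2\leq K$), so I may assume $n\geq 2$.

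Next I would import Rademacher's index estimates from~\cite{Rad04}: under the strict pinching $\tfrac{9}{4}(r/(r+1))^2<K\leq 1$, every closed geodesic $\gamma$ of $F$ satisfies $\czl(\gamma)\geq n$ and $\czl(\gamma^2)\geq 3n$. Since $\rs(\vr)-n=n$ and $2\rs(\vr)-n=3n$, these are precisely the hypotheses of Theorem~B with $k=2$. Theorem~B therefore produces an elliptic closed Reeb orbit $\gamma$ with $\czl(\gamma)=n$, corresponding to a closed geodesic of $F$ on $S^{n+1}$ which is elliptic. Under the Morse--Conley--Zehnder identification used by Rademacher (in the trivialization arising from the Boothby--Wang fibration, compatible with a capping disk), the equality $\czl(\gamma)=n$ translates to $\morse(\gamma)=n$ in the non-degenerate case ensured by the strict pinching.

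For primality, suppose $\gamma=\eta^m$ for a prime closed geodesic $\eta$ and $m\geq 2$. Applying Rademacher's estimate to $\eta$ gives $\czl(\eta^2)\geq 3n$, so Long's iteration inequality $|\czl(\delta^k)-k\Delta(\delta)|\leq n$ (for the mean index $\Delta$) yields $\Delta(\eta)\geq n$, hence $\Delta(\gamma)=m\Delta(\eta)\geq mn$. On the other hand, $\czl(\gamma)=n$ forces $\Delta(\gamma)\leq 2n$, so $m=2$ and $\Delta(\eta)=n$; but then $\czl(\gamma)=\czl(\eta^2)\geq 3n$, contradicting $\czl(\gamma)=n$. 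Hence $\gamma$ is prime.

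Finally, the non-strict case $K=\tfrac{9}{4}(r/(r+1))^2$ is handled by perturbation: approximate $F$ by Finsler metrics $F_\epsilon$ satisfying the strict pinching, apply the above to obtain elliptic closed geodesics $\gamma_\epsilon$ with $\morse(\gamma_\epsilon)=n$, and extract a $C^\infty$-convergent subsequence using the uniform upper bound on the period furnished by the action estimate in the neck-stretching construction of Theorem~A (the action of the produced orbit is controlled by that of the Boothby--Wang fiber). The limit $\gamma$ is an elliptic closed geodesic of $F$, and $\morse(\gamma)\leq n$ follows by upper semicontinuity of the Morse index. The hardest point is the Morse--Conley--Zehnder translation in the right trivialization: the shift conventions between $\czl$ on $SS^{n+1}$ and $\morse$ on the loop space of $S^{n+1}$ must be reconciled so that $\czl(\gamma)=n$ really does match $\morse(\gamma)=n$ without an extra shift, and the iteration-inequality arithmetic in the primality argument (the interval $\Delta\in[n,2n]$ being tight) depends sensitively on these conventions.
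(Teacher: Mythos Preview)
Your proposal is correct and follows essentially the same route as the paper: reduce $n=1$ to Theorem~\ref{thm:elliptic geodesic S^2}, verify (H1) for the prequantization $SS^{n+1}\to G_2^+(\R^{n+2})$, feed Rademacher's estimates $\czl(\gamma)\geq n$ and $\czl(\gamma^2)\geq 3n$ into Theorem~B with $k=2$, and handle the non-strict pinching by perturbation using the uniform action bound from the neck-stretching argument.

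Two points of comparison. First, the paper is explicit about the trivialization shift you flag as the ``hardest point'': it works with two trivializations $\Phi$ (sending the vertical distribution to a fixed Lagrangian, homotopic to the capping-disk trivialization) and $\Psi$ (constant frame on a fiber), records the relation $\czl(\gamma;\Phi)=\czl(\gamma;\Psi)+2n$, and uses $\czl(\gamma;\Phi)=\morse(\gamma)$; Theorem~B (via Theorems~\ref{Thm 1} and~\ref{Thm 2}) yields $\czl(\gamma;\Psi)=-n$, hence $\morse(\gamma)=n$. Second, your primality argument via the mean-index inequality is correct but more elaborate than needed. The paper simply observes that if $\gamma=\psi^k$ with $\psi$ prime, then $\psi$ is elliptic (iterate of elliptic is elliptic) and $\morse(\psi)\leq\morse(\gamma)=n$ by monotonicity of the Morse index under iteration, while $\morse(\psi)\geq n$ by dynamical convexity; so $\psi$ itself satisfies the conclusion and one may replace $\gamma$ by $\psi$. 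This avoids the mean-index arithmetic entirely.
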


Our next result applies to magnetic flows and motivates our definition of \emph{negative} dynamical convexity. Let $N$ be a Riemannian manifold and $\Omega$ a closed 2-form on $N$. Consider on $T^*N$ the twisted symplectic form $\omega = \omega_0 + \pi^*\Omega$, where $\omega_0$ is the canonical symplectic form and $\pi: T^*N \to N$ is the projection. Let $H: T^*N \to \R$ be the Hamiltonian given by the kinetic energy. The Hamiltonian flow of $H$ with respect to $\omega$ is called the magnetic flow generated by the Riemannian metric and the magnetic field $\Omega$.

It is well known that if $N$ is a closed orientable surface with genus $\g\neq 1$ and $\Omega$ is a symplectic form then there exists $\ep>0$ such that $H^{-1}(k)$ is of contact type for every $k<\ep$, see \cite{Ben,GGM} (an easy argument using the Gysin sequence shows that it cannot be of contact type if $N=T^2$). Benedetti proved in \cite{Ben} that if $N=S^2$ then every periodic orbit $\ga$ of the lifted Reeb flow on $S^3$ satisfies $\czl(\ga)\geq 3$ if $\ep$ is chosen sufficiently small.

Moreover, it follows from his estimates that, for surfaces of genus $\g>1$, a homologically trivial periodic orbit $\gamma$ on $H^{-1}(k)$ satisfies $\czu(\gamma) \leq 2\chi(N)+1$, where $\chi(N)$ stands for the Euler characteristic of $N$ and the index is computed using a suitable global trivialization of the contact structure; see Section \ref{sec:proof magnetic} for details. Now, fix $k< \ep$ and consider $S := H^{-1}(k)$ with the contact form $\alpha$. One can prove that $S$ is Boothby-Wang and there is a $|\chi(N)|$-covering $\tau: \wtl S \to S$ such that $\wtl\beta := \tau^*\beta$ (where $\beta$ is the contact form whose Reeb flow is periodic) generates a free circle action and the deck transformations are given by the induced action of $\Z_{|\chi(N)|} \subset S^1$. Moreover, the orbits of this circle action in $\wtl S$ are homologous to zero. Let $\wtl\vr$ be a (simple) orbit of the Reeb flow of  $\wtl\beta$. An easy computation shows that $\rs(\wtl\vr)=2\chi(N)$. Consequently, the lift of $\alpha$ to $\wtl S$ furnishes a $\Z_{|\chi(N)|}$-invariant contact form satisfying the hypothesis (H2) in Theorem A.

In other words, the magnetic flow on a sufficiently small energy level has a lift to a $|\chi(N)|$-covering that is \emph{positively} dynamically convex if $\g=0$ and \emph{$a$-negatively} dynamically convex if $\g > 1$, where $a$ is the free homotopy class of $\wtl\vr$. Therefore, we obtain the following result.

\begin{theorem}
\label{thm:magnetic}
Let $(N,g)$ be a closed orientable Riemannian surface of genus $\g \neq 1$ and $\Omega$ a symplectic magnetic field on $N$. There exists $\ep>0$ such that the magnetic flow has an elliptic closed orbit $\gamma$ on $H^{-1}(k)$ for every $k<\ep$. Moreover, $\gamma$ is prime, freely homotopic to a fiber of $SN$ and satisfies $\czl(\gamma)=1$ if $\g=0$ and $\czu(\gamma)=-1$ if $\g>1$, where the indexes are computed using a suitable global trivialization of $\xi$ over $H^{-1}(k)$.
\end{theorem}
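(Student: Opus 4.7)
The plan is to transfer the problem to the $|\chi(N)|$-fold Boothby--Wang cover of $S$, apply Theorem A on the cover, and then descend.

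First, by \cite{Ben,GGM} for $k<\ep$ small the level $S:=H^{-1}(k)$ is of contact type with contact form $\alpha$, and as indicated in the paragraphs preceding the theorem, $S$ carries a second contact form $\beta$ with periodic Reeb flow whose $|\chi(N)|$-fold cover $\tau:\wtl S\to S$ yields a free $S^1$-action generated by $R_{\wtl\beta}$, where $\wtl\beta:=\tau^*\beta$. Thus $\wtl S$ is a prequantization of a closed orientable surface $N'$ of genus $\g$, and $\wtl\alpha:=\tau^*\alpha$ is automatically $G$-invariant for the deck group $G=\Z_{|\chi(N)|}$, which is nontrivial in both cases ($|G|=2$ when $\g=0$ and $|G|=2\g-2\geq 2$ when $\g>1$).

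The next step is to verify the hypotheses of Theorem A for $(\wtl S,\wtl\alpha)$. For $\g=0$: $N'\cong S^2$ admits an even Morse function and is monotone, $\wtl S\cong S^3$ has vanishing $c_1(\wtl\xi)$, and $\rs(\wtl\vr)=2\chi(S^2)=4>2$ (computed with a capping disk), so (H1) applies with trivial class $a=0$. For $\g>1$: $\pi_2(N')=0$ and (H2) is immediate. One then establishes the dynamical convexity of $\wtl\alpha$ from Benedetti's index estimates: in the $\g=0$ case every periodic orbit $\wtl\ga'$ satisfies $\czl(\wtl\ga')\geq 3=\rs(\wtl\vr)-n$ with $n=1$; for $\g>1$, every orbit of $R_\alpha$ on $S$ with trivial homology class satisfies $\czu\leq 2\chi(N)+1$, and a short verification shows that orbits $\wtl\ga'$ of $R_{\wtl\alpha}$ with free homotopy class $a=[\wtl\vr]$ descend to such orbits on $S$, so $\czu(\wtl\ga')\leq\rs(\wtl\vr)+n$. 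Theorem A then produces a $G$-symmetric elliptic closed orbit $\wtl\ga$ on $\wtl S$ with $\czl(\wtl\ga)=\rs(\wtl\vr)-n$ or $\czu(\wtl\ga)=\rs(\wtl\vr)+n$ respectively.

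Finally, $G$-symmetry forces $\wtl\ga$ to be the full lift of a closed orbit $\ga:=\tau(\wtl\ga)$ on $S$ of period $T_{\wtl\ga}/|G|$. Ellipticity descends because the linearized Poincar\'e map of $\wtl\ga$ is the $|G|$-th power of that of $\ga$, so the eigenvalues of the latter all have unit modulus. Primeness of $\ga$ follows from the extremality of the index of $\wtl\ga$ within its free homotopy class, and $\ga$ is freely homotopic to a fiber of $SN$ because $\wtl\ga$ is homotopic to a fiber of the Boothby--Wang bundle $\wtl S\to N'$ and $\tau$ sends such fibers to fibers of $SN\to N$. To extract the final indices $\czl(\ga)=1$ (for $\g=0$) and $\czu(\ga)=-1$ (for $\g>1$), apply the standard iteration formula for an elliptic orbit of a three-dimensional Reeb flow to the fact that $\wtl\ga$ plays the role of the $|G|$-th iterate of $\ga$ in the lifted flow, combined with the computable shift between the trivialization of $\wtl\xi$ used on the cover and the global trivialization of $\xi$ over $SN$.

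The main technical obstacle is the index bookkeeping in the last two paragraphs: verifying that Benedetti's estimates on $S$ transfer to the required $a$-dynamical convexity on $\wtl S$, particularly in the higher-genus case where the class $a$ is nontrivial, and then reading off the precise value of the Conley--Zehnder index of $\ga$ from that of $\wtl\ga$ via the iteration formula while correctly accounting for the change of trivialization between cover and base.
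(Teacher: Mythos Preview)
Your overall strategy coincides with the paper's: pass to the $|\chi(N)|$-fold Boothby--Wang cover $\wtl S$, check the relevant dynamical convexity of $\wtl\alpha$ from Benedetti's index bounds, and invoke the main theorem with the nontrivial group $G=\Z_{|\chi(N)|}$. The paper, however, does not apply Theorem~A on $\wtl S$ and then descend. It applies the quotient-level reformulations Theorems~\ref{Thm 1} and~\ref{Thm 2}, which output the elliptic orbit $\ga$ directly on $S=\bM$ in the free homotopy class $\ba$ of a \emph{simple} fiber, with $\czl(\ga;\Psi)=-1$ for $\g=0$ and $\czu(\ga;\Psi)=1$ for $\g>1$, where $\Psi$ is the trivialization induced by a constant frame over a fiber. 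The stated index is then a single explicit Maslov shift to the global trivialization $\Phi$: one checks $\czl(\ga;\Phi)=\czl(\ga;\Psi)+2=1$ and $\czu(\ga;\Phi)=\czu(\ga;\Psi)-2=-1$. No iteration formula enters.

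Your descent route has two genuine soft spots. First, recovering $\mu_{\text{CZ}}^\pm(\ga)$ from $\mu_{\text{CZ}}^\pm(\wtl\ga)$ by an iteration formula runs the inference in the wrong direction, and for $\g=0$ Theorem~A does not guarantee that $\wtl\ga$ is simple, so the identification $\wtl\ga\leftrightarrow\ga^{|G|}$ on which your period and Poincar\'e-map claims rest may fail. Second, primeness of $\ga$ is not a consequence of ``index extremality''. The paper's argument is case-specific: for $\g=0$ it is a mean-index contradiction (if $\ga=\psi^k$ with $k\geq 2$ then $k\geq 3$ since $\ga$ is non-contractible in $S$; dynamical convexity on the cover gives $\czl(\psi^2)\geq 3$, hence $\Delta(\psi)\geq 1$ and $\Delta(\ga)\geq 3$, contradicting $\czl(\ga;\Phi)=1$), while for $\g>1$ it is purely topological (every closed curve in the free homotopy class of the fiber of $SN$ is simple because $\pi_1(N)$ is torsion-free).
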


\begin{remark}
There are in \cite{Ben} explicit lower bounds for $\ep$ in terms of the Riemannian metric and the magnetic field.
\end{remark}

\subsection{Toric contact manifolds}

As pointed out previously, it is proved in \cite{HWZ} that a convex hypersurface in $\R^{2n}$ is dynamically convex. Our next result shows that this is a particular case of a more general phenomenon for toric contact manifolds. Details are given in Sections \ref{sec:toric} and \ref{sec:proof convex}. Toric contact manifolds can be defined as contact manifolds of dimension $2n+1$ equipped with an effective Hamiltonian action of a torus of dimension $n+1$. A \emph{good} toric contact manifold has the property that its symplectization can be obtained by symplectic reduction of $\C^d\setminus\{0\}$, where $d$ is the number of facets of the corresponding momentum cone, by the action of a subtorus $K \subset \T^d$, with the action of $\T^d$ given by the standard linear one. The sphere $S^{2n+1}$ is an example of a good toric contact manifold and its symplectization is obtained from $\C^{n+1}$ with $K$ being trivial (that is, there is no reduction at all; the symplectization of $S^{2n+1}$ can be identified with $\C^{n+1}\setminus\{0\}$).

Consequently, given a contact form $\alpha$ on a good toric contact manifold $M$ we can always find a Hamiltonian $H_\alpha: \C^d \to \R$ invariant by $K$ such that the reduced Hamiltonian flow of $H$ is the Reeb flow of $\alpha$. Notice that $H_\alpha$ is not unique. We say that a contact form $\alpha$ on $M$ is \emph{convex} if such $H_\alpha$ can be chosen convex on $Z:=F^{-1}(0)$, where $F$ is the momentum map associated to the action of the subtorus $K$. By a technical reason, we will also suppose that the linearized Hamiltonian flow of $H_\alpha$ along every periodic orbit in $Z$ satisfies $d\Phi^t_{H_\alpha}(\nabla F_\kappa)=\nabla F_\kappa$ for all $t \in \R$ and every component  $F_\kappa$ of the momentum map, where the gradient is taken with respect to the Euclidean metric; see Section \ref{sec:proof convex}. Clearly, a contact form on the standard contact sphere $S^{2n+1}$ is convex in this sense if and only if the corresponding hypersurface in $\C^{n+1}$ is convex. Moreover, every toric contact form on any good toric contact manifold is convex; see Section \ref{sec:toric}.

It is shown in \cite{AM} that good toric contact manifolds admit suitable non-degenerate contact forms whose cylindrical contact homology is well defined (in fact, its differential vanishes identically). Moreover, it can be computed in a purely combinatorial way in terms of the associated momentum cone. Using this computation in our definition of dynamical convexity, we achieve the following result.

\begin{theorem}
\label{thm:convex}
A convex contact form $\alpha$ on a good toric simply connected contact manifold $(M,\xi)$ is positively dynamically convex. More precisely, for every periodic orbit $\ga$ of $\alpha$ there exists a non-degenerate contact form $\tilde\alpha$ on $(M,\xi)$ with a well defined cylindrical contact homology such that $\czl(\ga) \geq \inf\{k \in \Z\mid HC_k(\tilde\alpha) \neq 0\}$.
\end{theorem}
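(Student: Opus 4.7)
The plan is to transport the problem from $M$ to $\C^d$ via the symplectic reduction that realizes $(M,\xi)$ as a good toric contact manifold, and to exploit convexity there. The non-degenerate model $\tilde\alpha$ will be the one produced in \cite{AM}: its cylindrical contact homology is well defined (the differential in fact vanishes) and $k_- := \inf\{k \in \Z \mid HC_k(\tilde\alpha) \neq 0\}$ admits an explicit description in terms of the primitive normals to the facets of the momentum cone and the Reeb vector determined by $\tilde\alpha$. Our task is then to establish the index inequality $\czl(\gamma) \geq k_-$ for an arbitrary closed orbit $\gamma$ of the convex contact form $\alpha$.

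First I would lift $\gamma$. By $K$-invariance of the convex Hamiltonian $H_\alpha$ associated to $\alpha$, the orbit $\gamma$ of period $T$ lifts to a Hamiltonian trajectory $\tilde\gamma\colon [0,T] \to \C^d$ of $H_\alpha$ starting at some $p \in J^{-1}(0)$ and ending at $g \cdot p$ for a uniquely determined $g \in K$. Concatenating $\tilde\gamma$ with a path in the $K$-orbit joining $g\cdot p$ back to $p$ produces a genuine loop in $\C^d$, and the standard formula for the Conley--Zehnder index under symplectic reduction expresses $\czl(\gamma)$ as the Robbin--Salamon index of the linearized $H_\alpha$-flow along $\tilde\gamma$, minus explicit contributions coming from the $K$-direction and from the Reeb direction. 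These correction terms are computable in terms of the momentum cone data.

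Second, I would apply an Ekeland / Hofer--Wysocki--Zehnder style convexity estimate to the lifted trajectory. Because $H_\alpha$ is convex in a neighborhood of $J^{-1}(0)$, its Hessian is positive semidefinite along $\tilde\gamma$, so the linearized flow produces a monotone path in $\Sp(2d)$ whose Robbin--Salamon index is bounded below by counting crossings with the Maslov cycle; this is precisely the mechanism that gives the sharp bound $\czl \geq n+2$ for convex hypersurfaces in $\R^{2n+2}$ recalled in Section \ref{sec:index_estimates}. After subtracting the $K$- and Reeb-direction corrections identified in the previous step, the resulting lower bound for $\czl(\gamma)$ must match the combinatorial formula for $k_-$ from \cite{AM}, at which point the theorem follows.

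The main obstacle is the bookkeeping under symplectic reduction. One has to identify the linearized Reeb flow of $\alpha$ on $\xi$ along $\gamma$ with the quotient of the linearized $H_\alpha$-flow along $\tilde\gamma$ by the span of the $K$-directions and the Reeb direction, and then reconcile the resulting index-correction terms with the momentum-cone description of $k_-$. In addition, the semicontinuous extension of the Conley--Zehnder index must be handled with care for degenerate orbits (which is why $\czl$, rather than $\cz$, appears in the statement), and one must verify that the convexity-based monotonicity estimate descends cleanly through the quotient. Once these combinatorial and index-theoretic identifications are lined up, the convexity bound propagates to $M$ and yields positive dynamical convexity.
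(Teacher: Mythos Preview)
Your setup is correct --- lifting $\gamma$ to a Hamiltonian trajectory of $H_\alpha$ in $\C^d$ with holonomy $\lambda \in K$, and invoking the monotonicity of the index under convexity, is exactly how the paper proceeds. But the proposal is missing the two ideas that actually make the argument go through, and as written the sentence ``the resulting lower bound for $\czl(\gamma)$ must match the combinatorial formula for $k_-$'' is a restatement of the goal rather than a proof step.

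First, convexity alone does not produce a usable lower bound: it gives monotonicity, so you need something to compare $H_\alpha$ \emph{to}. The paper's device is to introduce, for the specific holonomy element $\lambda = (\lambda_1,\dots,\lambda_d)$ determined by $\gamma$, the convex \emph{quadratic} Hamiltonian $H_\lambda(z) = \tfrac12\sum_j \lambda_j |z_j|^2$, whose time-$1$ map is also multiplication by $\lambda$. One then checks (Lemma~\ref{lem:index3}) that the index of the juxtaposed path built from $\delta H_\lambda$ is independent of $0<\delta\le 1$; choosing $\delta$ small enough that $d^2 H_\alpha > d^2(\delta H_\lambda)$ along the lift, monotonicity~\eqref{eq:index_monotonicity2} yields $\czl(\gamma) \ge \rs(\tgamma_\lambda) - n$, where $\tgamma_\lambda$ is any $1$-periodic orbit of the linear flow of $Q_\lambda = H_\lambda - F_\lambda$. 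This comparison Hamiltonian and the $\delta$-scaling trick are the missing link between ``$H_\alpha$ is convex'' and an explicit numerical bound.

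Second, the connection to $k_-$ is not made by matching against a precomputed combinatorial formula. Instead the paper constructs (Proposition~\ref{prop:estimate2}) a non-degenerate toric Reeb vector $R_{\lambda,\ep}$ --- a generic perturbation of the integral toric Reeb vector $R_\lambda$ along the normals meeting at a chosen edge of the moment cone --- which has a closed orbit of Robbin--Salamon index exactly $\rs(\tgamma_\lambda)-n$. Since every closed orbit of a non-degenerate toric Reeb flow is homologically essential (Section~\ref{ssection:homology}), this index is automatically $\ge k_-$ for \emph{that} toric form. Note in particular that the non-degenerate model $\tilde\alpha$ depends on $\gamma$ through $\lambda$; your proposal fixes $\tilde\alpha$ once and for all as ``the one produced in \cite{AM}'', but the theorem deliberately quantifies $\tilde\alpha$ after $\gamma$ precisely to allow this dependence.
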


\begin{remark}
It is proved in Proposition \ref{prop:finite} that $(M,\xi)$ always supports a non-degenerate contact form $\tilde\alpha$ with a well defined cylindrical contact homology such that $k_-:=\inf\{k \in \Z\mid HC_k(\tilde\alpha) \neq 0\}$ is an integer. Recall that well defined cylindrical contact homology means that its differential $\partial$ is well defined and satisfies $\partial^2=0$.
\end{remark}

Notice that dynamical convexity for general toric contact manifolds is a property sensitive to the choice of the contact structure. For instance, it is shown in \cite{AM} a family of inequivalent toric contact structures $\xi_k$ on $S^2 \times S^3$, 
$k \in \{0,1,2,\ldots\}$, for which $k_-=2$ for $\xi_0$ and $k_-=0$ for $\xi_k$ with $k>0$.

Now, let $M$ be a prequantization of a closed toric symplectic manifold $(N,\omega)$ such that $[\omega] \in H^2(N,\Z)$. It is easy to see that $M$ is a good toric contact manifold. Moreover, $N$ admits an even Morse function. We have then the following generalization of the aforementioned result from \cite{DDE}.

\begin{corollary}
Let $M$ be a simply connected prequantization of a closed toric symplectic manifold $(N,\omega)$ such that $[\omega] \in H^2(N,\Z)$ and $G$ a non-trivial finite subgroup of $S^1$. Suppose that $N$ is monotone with constant of monotonicity $\lambda$ bigger than one. Then every $G$-invariant convex contact form on $M$ has an elliptic $G$-symmetric closed orbit.
\end{corollary}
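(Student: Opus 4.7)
The plan is to deduce the corollary by combining Theorem \ref{thm:convex} (which gives positive dynamical convexity for convex toric contact forms) with Theorem A in case (H1) (which upgrades dynamical convexity plus $G$-invariance to the existence of an elliptic $G$-symmetric orbit). So the main task is to check that all hypotheses of (H1) are met by a simply connected prequantization of a monotone toric symplectic manifold whose minimal Chern number exceeds one.

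First I would verify the hypotheses of Theorem A. Since $M$ is the prequantization of $(N,\om)$ with $[\om]\in H^2(N,\Z)$, it carries a connection contact form $\beta$ whose Reeb flow generates the free $S^1$-action, so $(M,\xi)$ is Boothby-Wang. The splitting $TM = \xi \oplus \langle R_\beta\rangle$ and the isomorphism $\xi \iso \pi^\ast TN$ give $c_1(\xi) = \pi^\ast c_1(TN)$; since $N$ is monotone, $c_1(TN) = \lambda[\om]$, and $\pi^\ast[\om] = [d\beta] = 0$, so $c_1(\xi)|_{H_2(M,\Z)}=0$. Because $M$ is simply connected, every periodic orbit lies in the free homotopy class $a=0$, so we work with capping disks. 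The toric manifold $N$ admits an even Morse function (for instance a generic component of its moment map, whose critical set is exactly the fixed locus of the torus action and has only even Morse indices). Monotonicity $\om=\lambda c_1(TN)$ with $\lambda>0$ immediately implies that $\langle\om,S\rangle>0$ forces $\langle c_1(TN),S\rangle>0$. Finally, the hypothesis that the minimal Chern number of $N$ exceeds one, combined with the formula for $\rs(\vr)$ in a capping-disk trivialization for a simple Reeb orbit of $\beta$ (which gives $\rs(\vr)= 2\langle c_1(TN),A\rangle$ for $A$ the minimal section class), yields $\rs(\vr)>2$ as required.

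Second, I would invoke Theorem \ref{thm:convex} to obtain the dynamical-convexity input. A prequantization of a closed toric symplectic manifold is itself a good toric contact manifold (the contact moment cone is the cone over the polytope of $N$), so $\alpha$ is a convex contact form on a good toric simply connected contact manifold. Hence every periodic orbit $\tilde\ga$ of $\alpha$ satisfies $\czl(\tilde\ga)\geq k_-$. On the other hand, as recorded after Theorem A, under hypothesis (H1) the contact homology of the Boothby-Wang structure satisfies $k_-^{0} = \rs(\vr)-n$; since $M$ is simply connected $k_-=k_-^{0}$, so $\czl(\tilde\ga)\geq \rs(\vr)-n$ for every periodic orbit of $\alpha$.

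Third, apply Theorem A (H1) directly. The contact form $\alpha$ is $G$-invariant by hypothesis with $G$ non-trivial, and all the conditions of (H1) are in place, together with the dynamical-convexity inequality just established. Theorem A therefore produces a $G$-symmetric closed orbit $\ga$ in the class $a=0$ which is elliptic and satisfies $\czl(\ga)=\rs(\vr)-n$, completing the proof. The main potential obstacle is the bookkeeping that ensures Theorem \ref{thm:convex}'s conclusion $\czl(\tilde\ga)\geq k_-$ is \emph{the same} inequality Theorem A's non-triviality clause demands; this is where the identification $k_- = \rs(\vr)-n$ via the prequantization computation of Section \ref{sec:computationCH} is crucial, and where the hypothesis on the minimal Chern number is used not merely to get $\rs(\vr)>2$ but to make the spectral window $k_-^0=\rs(\vr)-n$ exact rather than approximate.
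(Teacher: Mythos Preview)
Your approach is correct and is exactly the one the paper intends: the corollary is stated without proof, immediately after the paragraph noting that a prequantization of a closed toric symplectic manifold is a good toric contact manifold and that $N$ admits an even Morse function, so it is meant to follow directly from Theorem~A in case (H1) combined with Theorem~\ref{thm:convex}. Your verification of the (H1) hypotheses (Boothby--Wang structure, $c_1(\xi)=\pi^*c_1(TN)=\lambda\pi^*[\omega]=0$, even Morse function from a generic moment-map component, the monotonicity implication on $c_1(TN)$, and $\rs(\vr)>2$ from the minimal Chern number via~\eqref{eq:lower_bound_index}) is what the paper leaves implicit.

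One small remark on your closing sentence: the hypothesis that the minimal Chern number exceeds one is used precisely to ensure $\rs(\vr)\geq 4>2$, as in the Remark after Theorem~A; the identity $k_-^0=\rs(\vr)-n$ itself already follows from~\eqref{eq:HC-1} together with~\eqref{eq:bound_c_1(TN)} (monotonicity makes $\rs(\vr^k)$ increasing in $k$), so it is not the Chern hypothesis that makes that identification ``exact''. This does not affect the validity of your argument.
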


\begin{remark}
As mentioned in Remark \ref{rmk:transversality1}, the hypotheses on $N$ can be relaxed if we assume suitable transversality assumptions for the relevant moduli spaces of pseudo-holomorphic curves.
\end{remark}

\vskip .2cm
\noindent {\bf Organization of the paper.} In Section \ref{sec:CZ} we introduce the basic material and give some estimates for the indexes of periodic orbits used throughout this work. In Section \ref{sec:CH} we review the background concerning almost complex structures and holomorphic curves necessary for the proofs of Theorems A and B, established in Section \ref{sec:proof main theorem}. The applications are proved in Section \ref{sec:proof applications}. Finally, Sections \ref{sec:toric} and \ref{sec:proof convex} are devoted to introduce the basic material about toric contact manifolds and prove Theorem \ref{thm:convex} respectively.

\section{The Conley-Zehnder index}
\label{sec:CZ}

\subsection{The Conley-Zehnder index for paths of symplectic matrices}

Let $\P(2n)$ be the set of paths of symplectic matrices $\Gamma:[0,1]\to Sp(2n)$ such that $\Gamma(0)=\text{Id}$, endowed with the $C^1$-topology. Consider the subset $\P^*(2n) \subset \P(2n)$ given by the non-degenerate paths, that is, paths $\Ga \in \P(2n)$ satisfying the additional property that $\Gamma(1)$ does not have $1$ as an eigenvalue. Following \cite{SZ}, one can associate to $\Gamma \in \P^*(2n)$ its Conley-Zehnder index $\cz(\Gamma) \in \Z$ uniquely characterized by the following properties:
\begin{itemize}
 \item {\bf Homotopy:} If $\Gamma_s$ is a homotopy of arcs in $\P^*(2n)$ then $\cz(\Gamma_s)$ is constant.
 \item {\bf Loop:} If $\phi: \R/\Z \rightarrow Sp(2n)$ is a loop at the identity and $\Gamma \in \P^*(2n)$ then $\cz(\phi\Gamma) = \cz(\Gamma) + 2\maslov(\phi)$, where $\maslov(\phi)$ is the Maslov index of $\phi$.
  \item {\bf Signature:} If $A \in \R^{2n\times 2n}$ is a symmetric non-degenerate matrix with all eigenvalues of absolute value less than $2\pi$ and $\Gamma(t)=\exp(J_0At)$, where $J_0$ is the canonical complex structure in $\R^{2n}$, then $\cz(\Ga)=\frac{1}{2}\text{Sign}(A)$.
\end{itemize}

There are different extensions of $\cz: \P^*(2n) \to \Z$ to degenerate paths in the literature. In this work we will use the Robbin-Salamon index $\rs: \P(2n) \to \frac{1}{2}\Z$ defined in \cite{RS} and lower and upper semicontinuous extensions denoted by $\czl$ and $\czu$ respectively. The later are defined in the following way: given $\Ga \in \P(2n)$ we set
\begin{equation}
\label{lower_CZ}
\czl(\Ga) = \sup_{U} \inf\{\cz(\Ga^\pr) \mid \Ga^\pr \in U \cap \P^*(2n)\} 
\end{equation}
and
\begin{equation}
\label{upper_CZ}
\czu(\Ga) = \inf_{U} \sup\{\cz(\Ga^\pr) \mid \Ga^\pr \in U \cap \P^*(2n)\}, 
\end{equation}
where $U$ runs over the set of neighborhoods of $\Ga$. The index $\czl$ coincides with \cite[Definition 6.1.10]{Lon02}. We have the relation
\begin{equation}
\label{eq:czl x czu}
\czl(\Ga^{-1}) = -\czu(\Ga).
\end{equation}
for every $\Ga \in \P(2n)$. This follows immediately from the fact that if $\Ga \in \P^*(2n)$ then $\cz(\Ga^{-1})=-\cz(\Ga)$.

\subsection{An analytical definition of $\czl$}

Following \cite{HWZ}, we will give an alternative analytical definition of $\czl$ which will be useful for us later. Denote by $\S$ the set of continuous paths of symmetric matrices $A(t)$ in $\R^{2n \times 2n}$, $0\leq t\leq 1$, equipped with the metric $d(A,B)=\int_0^1\|A(t)-B(t)\|\,dt$. Associated to a path $A \in \S$ we have the symplectic path $\Ga \in \P(2n)$ given by the solution of the initial value problem
\begin{equation}
\label{eq:symm_path}
\dot\Ga(t) = J_0A(t)\Ga(t),\ \Ga(0)=\text{Id},\ 0 \leq t \leq 1.
\end{equation}
Conversely, every symplectic path $\Ga \in \P(2n)$ is associated to a unique path of symmetric matrices $A$ satisfying the previous equation. Consider the self-adjoint operator $L_A: H^1(S^1,\R^{2n}) \to L^2([0,1],\R^{2n})$ given by
\[
L_A(v)=-J_0\dot v - A(t)v.
\]
This operator is a compact perturbation of the self-adjoint operator $-J_0\dot{v}$ whose spectrum is $2\pi\Z$. Thus, the spectrum of $L_A$, denoted by $\sigma(A)$, is given by real eigenvalues having multiplicities at most $2n$ and is unbounded from above and below. The spectrum bundle $\B \to \S$ is defined as
\[
\B = \bigcup_{A\in\S} (\{A\} \times \sigma(A)).
\]
The spectrum bundle has a unique bi-infinite sequence $(\lambda_k)_{k\in\Z}$ of continuous sections characterized by the following properties:
\begin{itemize}
\item $A \mapsto \lambda_k(A)$ is continuous.
\item $\lambda_k(A) \leq \lambda_{k+1}(A)$ for all $k \in \Z$, $A \in \S$, and $\sigma(A)=(\lambda_k(A))_{k\in\Z}$.
\item For $\tau \in \sigma(A)$ the number of $k$'s satisfying $\lambda_k(A)=\tau$ is the multiplicity of $\tau$.
\item The sequence is normalized at $A=0$ when $\lambda_j(0)=0$ for all $j$ such that $-n \leq j \leq n$.
\end{itemize}
The maps $\lambda_k$ have the following important monotonicity property. If $B \in \S$ satisfies $B(t)\geq 0$ then
\begin{equation}
\label{eq:index_monotonicity1}
\lambda_k(A+B) \leq \lambda_k(A)
\end{equation}
and if additionally we have $B(t_0) \geq \ep\text{Id}$ for some $t_0 \in [0,1]$ and $\ep>0$ then
\begin{equation}
\label{eq:index_monotonicity2}
\lambda_k(A+B) < \lambda_k(A)
\end{equation}
for every $A \in \S$ and $k \in \Z$. Consider the map $\hwz: \P(2n) \to \Z$ defined as
\[
\hwz(\Ga) = \max\{k \in \Z \mid \lambda_k(A)<0\}.
\]
It was proved in \cite{HWZ} that if $\Ga$ is non-degenerate then $\hwz(\Ga)=\cz(\Ga)$. We claim that for any $\Ga \in \P(2n)$ we have the identity
\begin{equation}
\label{eq:hwz=czl}
\hwz(\Ga) = \czl(\Ga).
\end{equation}
Indeed, given a non-degenerate perturbation $\Ga^\pr$ of $\Ga$, it follows from the continuity of the map $A \mapsto \lambda_k(A)$ that $\cz(\Ga^\pr) = \hwz(\Ga^\pr) \geq \hwz(\Ga)$ which implies that $\hwz(\Ga) \leq \czl(\Ga)$. On the other hand, let $A$ be the path of symmetric matrices associated to $\Ga$ via \eqref{eq:symm_path} and take $\ep>0$ arbitrarily small such that the path $\Ga^\pr$ generated by $A^\pr := A - \ep\text{Id}$ is non-degenerate. By \eqref{eq:index_monotonicity1} and the properties of $\lambda_k$ we have that
\[
\cz(\Ga^\pr) = \hwz(\Ga^\pr) = \hwz(\Ga),
\]
which implies that $\hwz(\Ga) \geq \czl(\Ga)$.

\subsection{The Conley-Zehnder index of periodic Reeb orbits}
\label{sec:index_orbits}

Let $a$ be a free homotopy class of a contact manifold $M^{2n+1}$. Fix a closed curve $\vr$ representing the free homotopy class $a$ and let $\Phi_t: \xi(\vr(t)) \to \R^{2n}$ be a symplectic trivialization of $\xi$ over $\vr$. Given a periodic orbit $\ga$ of $R_\alpha$ with free homotopy class $a$, consider a homotopy between $\vr$ and $\ga$. The trivialization $\Phi$ can be extended over the whole homotopy, defining a trivialization of $\xi$ over $\ga$. The homotopy class of this trivialization does not depend on the extension of $\Phi$. Moreover, if $c_1(\xi)|_{H_2(M,\Z)}=0$ then this homotopy class does not depend on the choice of the homotopy as well. Therefore, the index of the path
\begin{equation}
\Gamma(t) = \Phi(\gamma(t)) \circ d\phi_\alpha^t(\gamma(0))|_\xi \circ \Phi^{-1}(\gamma(0))
\end{equation}
is well defined, where $\phi_\alpha$ is the Reeb flow of $\alpha$. In this way, we have the indexes $\rs(\ga;\Phi)$, $\czl(\ga;\Phi)$ and $\czu(\ga;\Phi)$ which coincide with $\cz(\ga;\Phi)$ if $\ga$ is non-degenerate, that is, if its linearized Poincar\'e map does not have one as eigenvalue. The mean index of $\ga$ is defined as
\[
\Delta(\ga;\Phi) = \lim_{k\to\infty}\frac{1}{k} \czl(\ga^k;\Phi^k).
\]
It turns out that this limit exists. Moreover, the mean index is continuous with respect to the $C^2$-topology in the following sense: if $\alpha_j$ is a sequence of contact forms converging to $\alpha$ in the $C^2$-topology and $\ga_j$ is a sequence of periodic orbits of $\alpha_j$ converging to $\ga$ then $\Delta(\ga_j) \xrightarrow{j\to\infty} \Delta(\ga)$ \cite{SZ}. It is well known that the mean index satisfies the inequality
\[
|\cz(\tilde\ga;\Phi) - \Delta(\ga;\Phi)| \leq n
\]
for every closed orbit $\ga$ and non-degenerate perturbation $\tilde\ga$ of $\ga$. By the definition of $\mu_{\text{CZ}}^\pm$ this implies that
\begin{equation}
\label{eq:mean index}
|\mu_{\text{CZ}}^\pm(\ga;\Phi) - \Delta(\ga;\Phi)| \leq n
\end{equation}
for every periodic orbit $\ga$.

The trivialization can be omitted in the notation if it is clear in the context. If $\ga$ is contractible, we say that a trivialization of $\xi$ over $\ga$ is induced by a \emph{capping disk} if $\vr$ is the constant path and $\Phi_t: \xi(\vr(t)) \to \R^{2n}$ does not depend on $t$.

If we choose another trivialization $\Psi_t: \xi(\vr(t)) \to \R^{2n}$ over $\vr$ then we have the relations
\begin{equation}
\label{eq:change of trivialization}
\mu_{\text{CZ},\text{RS}}^\pm(\ga;\Psi) = \mu_{\text{CZ},\text{RS}}^\pm(\ga;\Phi) + 2\maslov(\Psi_t \circ \Phi_t^{-1}).
\end{equation}
The \emph{reduced} Conley-Zehnder index of a non-degenerate periodic orbit $\ga$ is defined as
\[
\rcz(\ga) = \cz(\ga) + n - 2.
\]
\emph{If $\ga$ is contractible we always take a trivialization over $\ga$ induced by a capping disk in the definition of the {\bf reduced} Conley-Zehnder index}. The reason for this is that the reduced Conley-Zehnder index is used in the computation of the Fredholm index of holomorphic curves with finite energy in symplectic cobordisms, see Section \ref{sec:CH}.

\subsection{Some estimates for the Conley-Zehnder index}
\label{sec:index_estimates}

In this section we will provide some estimates for the Conley-Zehnder index used in the proof of Corollary \ref{cor2}. Consider a Hamiltonian $H: \R^{2n+2} \to \R$  homogeneous of degree two. In what follows, we will use the convention that the Hamiltonian vector field $X_H$ is given by $\om(X_H,\cdot) = dH$. Suppose that $M:=H^{-1}(1/2)$ is $(r,R)$-pinched, that is,
\begin{equation}
\label{eq:pinching}
\|v\|^2R^{-2} \leq \lg d^2H(x)v,v \rg \leq \|v\|^2r^{-2}
\end{equation}
for every $x \in M$ and $v \in \R^{2n+2}$. Let $\xi$ be the contact structure on $M$ and $\xi^\om$ its symplectic orthogonal with respect to the canonical symplectic form $\om$. Clearly both $\xi$ and $\xi^\om$ are invariant by the linearized Hamiltonian flow of $H$. Let $\ga$ be a periodic orbit in $M$ with period $T$ and consider a capping disk $\sigma: D^2 \to M$ such that $\sigma|_{\partial D^2}=\ga$. Denote by $\Phi^\xi: \sigma^*\xi \to D^2 \times \R^{2n}$ and $\Phi^{\xi^\om}: \sigma^*\xi^\om \to D^2 \times \R^{2}$ the unique (up to homotopy) trivializations of the pullbacks of $\xi$ and $\xi^\om$ by $\sigma$. Fix a symplectic basis $\{e,f\}$ of $\R^2$. Note that $\Phi^{\xi^\om}$ can be chosen such that $\Phi^{\xi^\om}(X_H)=e$ and $\Phi^{\xi^\om}(Y)=f$, where $Y(x)=x$ (note that $\{X_H(\sigma(x)),Y(\sigma(x))\}$ is a symplectic basis of $\sigma^*\xi^\om(x)$ for every $x \in D^2$). Indeed, let $A: D^2 \to \Sp(2)$ be the map that associates to $x \in D^2$ the unique symplectic map that sends $\Phi^{\xi^\om}(X_H(\sigma(x)))$ to $e$ and $\Phi^{\xi^\om}(Y(\sigma(x)))$ to $f$. Then $\bar A \circ \Phi^{\xi^\om}$ gives the desired trivialization, where $\bar A: D^2 \times \R^2 \to D^2 \times \R^2$ is given by $\bar A(x,v)=(x,A(x)v)$.

We have that $\Phi:=\Phi^\xi \oplus \Phi^{\xi^\om}$ gives a trivialization of $\sigma^*T\R^{2n+2}$. Let $\Ga: [0,T] \to \Sp(2n+2)$ be the symplectic path given by the linearized Hamiltonian flow of $H$ along $\ga$ using $\Phi$. By construction, we can write $\Ga=\Ga^{\xi} \oplus \Ga^{\xi^\om}$, where $\Ga^{\xi}$ and $\Ga^{\xi^\om}$ are given by the linearized Hamiltonian flow of $H$ restricted to $\xi$ and $\xi^\om$ respectively. By the construction of $\Phi^{\xi^\om}$, the last one is given by the identity ($H$ is homogenous of degree two and therefore its linearized Hamiltonian flow preserves both $X_H$ and $Y$) and consequently $\czl(\Ga^{\xi^\om})=-1$. Thus,
\begin{equation}
\label{eq:index_split}
\czl(\Ga) = \czl(\Ga^{\xi})+\czl(\Ga^{\xi^\om}) = \czl(\Ga^{\xi})-1,
\end{equation}
where the first equality follows from the additivity property of the Conley-Zehnder index.

Now, note that $\Phi$ is homotopic to the usual (global) trivialization $\Psi$ of $\R^{2n+2}$ because both are defined over the whole capping disk. Thus, we will keep denoting by $\Ga$, without fear of ambiguity, the symplectic path given by the linearized Hamiltonian flow of $H$ along $\ga$ using $\Psi$. Let $H_r: \R^{2n+2} \to \R$ and $H_R: \R^{2n+2} \to \R$ be the Hamiltonians given by
\[
H_r(x)=\frac{1}{2r^2}\|x\|^2\ \text{ and }\ H_R(x)=\frac{1}{2R^2}\|x\|^2.
\]
Given a (not necessarily closed) trajectory $\ga_R: [0,S] \to \R^{2n+2}$ of $H_R$ the index of the symplectic path $\Ga_R: [0,S] \to \Sp(2n+2)$ induced by the corresponding linearized flow in $\R^{2n+2}$ (using the global trivialization $\Psi$) is given by
\begin{equation}
\label{eq:ind H_R}
\czl(\Ga_R) =
\begin{cases}
(2n+2)\frac{S}{2\pi R^2}-n-1 \text{ if } \frac{S}{2\pi R^2} \in \Z\\
(2n+2)\lfloor \frac{S}{2\pi R^2}\rfloor+n+1\text{ otherwise.}
\end{cases}
\end{equation}
Using the monotonicity property \eqref{eq:index_monotonicity1} and the first inequality in \eqref{eq:pinching} we conclude that if $S=T$ then
\begin{equation}
\label{eq:comparison}
\czl(\Ga) \geq \czl(\Ga_R).
\end{equation}
On the other hand, the second inequality in \eqref{eq:pinching} applied to $v=x$ furnishes
\[
H(x) \leq H_r(x).
\]
(Indeed, note that, by homogeneity, $H(x)= \frac 12\lg d^2H(x)x,x \rg$.) A theorem due to Croke and Weinstein \cite[Theorem A]{CW} establishes that if $H: \R^{2n+2} \to \R$ is a convex Hamiltonian homogeneous of degree two such that $H(x) \leq H_r(x)$ then every non-constant periodic solution of $H$ has period at least $2\pi r^2$.

Now suppose that $R/r<\sqrt\frac{k}{k-1}$ for some real number $k>1$. Then we have that $\lfloor k \rfloor \frac{T}{2\pi R^2} \geq \lfloor k \rfloor\frac{r^2}{R^2} >  \lfloor k \rfloor - 1$. Using this inequality,  \eqref{eq:ind H_R} and \eqref{eq:comparison} we arrive at
\begin{align*}
\czl(\Ga^{\lfloor k\rfloor}) & \geq (2n+2)(\lfloor k\rfloor-1)+n+1 \\
& = (2n+2)\lfloor k\rfloor-n-1.
\end{align*}
Consequently, by \eqref{eq:index_split} we infer that
\begin{equation}
\label{eq:index_estimate}
\czl(\ga^{\lfloor k\rfloor};\Phi^\xi) = \czl((\Ga^\xi)^{\lfloor k\rfloor}) \geq (2n+2)\lfloor k\rfloor-n.
\end{equation}
(Recall here that $\Ga^\xi$ is given by the restriction of $\Ga$ to the contact structure $\xi$; clearly, $(\Ga^\xi)^{\lfloor k\rfloor}$ denotes the $\lfloor k\rfloor$-th iterate of $\Ga^\xi$ and, by  \eqref{eq:index_split}, $\czl((\Ga^\xi)^{\lfloor k\rfloor})=\czl(\Ga^{\lfloor k\rfloor})+1$.) In particular, as mentioned in Section \ref{sec:main_results}, taking $k\to 1$ we conclude that every convex hypersurface in $\R^{2n}$ is dynamically convex.

\section{Holomorphic curves and cylindrical contact homology} 
\label{sec:CH}

\subsection{Cylindrical almost complex structures in symplectizations}
\label{sec:acs in symplectizations}

In what follows we will provide a geometric description of almost complex structures on topologically trivial cobordisms. Although we will follow a more coordinate free approach, this is equivalent to the standard coordinate dependent perspective usually found in the literature (see, for instance, \cite{Bo2}). Let $\xi^\bot\setminus 0$ be the annihilator of $\xi$ in $T^*M$ minus the zero section. It is naturally endowed with the symplectic form $\om_\xi:=d\lambda$, where $\lambda$ is the Liouville 1-form on $T^*M$. The co-orientation of $\xi$ orients the line bundle $TM/\xi \to M$ and consequently also $(TM/\xi)^* \simeq  \xi^\bot$. The symplectization $W$ of $(M,\xi)$ is the connected component of $\xi^\bot\setminus 0$ given by the positive covectors with respect to this orientation.

A choice of a contact form $\alpha$ representing $\xi$ induces the symplectomorphism
\begin{equation}
\label{eq:symp}
\begin{split}
\Psi_\alpha: (W,\om_\xi) & \to (\R \times M,d(e^r\alpha)) \\
\theta & \mapsto (\ln\theta/\alpha,\tau(\theta)),
\end{split}
\end{equation}
where $r$ denotes the $\R$-coordinate, $\tau: T^*M \to M$ is the projection and $\theta/\alpha$ is the unique function on $M$ such that $\theta=(\theta/\alpha)\alpha$. The free additive $\R$-action $c\cdot (r,x) \mapsto (r+c,x)$ on the right side corresponds to $c\cdot\theta \mapsto e^c\theta$ on the left side. An almost complex structure $J$ on $W$ is called cylindrical if $J$ is invariant with respect to this $\R$-action. It is called $\om_\xi$-compatible if $\om_\xi(J\cdot,\cdot)$ defines a Riemannian metric. We say that $J$ is compatible with $\alpha$ if it is cylindrical, $\om_\xi$-compatible and $\tilde J := (\Psi_\alpha)_*J$ satisfies  $\tilde J\partial_r = R_\alpha$ and $\tilde J(\xi)=\xi$. Denote the set of almost complex structures compatible with $\alpha$ by $\J(\alpha)$. It is well known that $\J(\alpha)$ is non-empty and contractible.

\subsection{Almost complex structures with cylindrical ends}
\label{sec:acs in cobordisms}

The fibers of $\tau: W \to M$ can be ordered in the following way: given $\theta_0, \theta_1 \in \tau^{-1}(x)$ we write $\theta_0 < \theta_1$ (resp. $\theta_0 \leq  \theta_1$) when $\theta_1 / \theta_0 > 1$ (resp. $\theta_1 / \theta_0 \geq 1$). Given two contact forms $\alpha_-,\alpha_+$ for $\xi$, we define $\alpha_- < \alpha_+$ if $\alpha_-|_x < \alpha_+|_{x}$ pointwise and, in this case, we set
\[
 \overline{W}(\alpha_-,\alpha_+) = \left \{ \theta \in W \mid \alpha_-|_{\tau(\theta)} \leq \theta \leq \alpha_+|_{\tau(\theta)} \right\}
\]
which is an exact symplectic cobordism between $(M,\alpha_-)$ and $(M, \alpha_+)$. Recall that this means that the oriented boundary of $\overline{W}(\alpha_-,\alpha_+)$ equals $M_+ \cup \overline{M}_-$, where $M_\pm$ are the sections given by $\alpha_\pm$ and $\overline{M}_-$ stands for $M_-$ with the reversed orientation, there is a Liouville vector field $Y$ for $\om_\xi$ (given by the radial one) transverse to the boundary and pointing outwards along $M_+$ and inwards along $M_-$ and the 1-form $i_Y\om_\xi$ restricted to $M_\pm$ equals $\alpha_\pm$. Define
\[
  \begin{split}
    W^-(\alpha_-) &= \left \{ \theta \in W \mid \theta \leq \alpha_-|_{\tau(\theta)} \right\}, \\
    W^+(\alpha_+) &= \left \{ \theta \in W \mid \alpha_+|_{\tau(\theta)} \leq \theta \right\}.
  \end{split}
\]
It follows that
\[
  W= W^-(\alpha_-) \bigcup_{\substack{\partial^+ W^-(\alpha_-) = \\ \partial^- \overline{W}(\alpha_-,\alpha_+)}} \overline{W}(\alpha_-,\alpha_+) \bigcup_{\substack{\partial^+ \overline{W}(\alpha_-,\alpha_+) \\ =  \partial^- W^+(\alpha_+)}} W^+(\alpha_+).
\]
An almost-complex structure $J$ on $W$ satisfying
\begin{itemize}
 \item $J$ coincides with $J_+ \in \J(\alpha_+)$ on a neighborhood of $W^+(\alpha_+)$,
 \item $J$ coincides with $J_- \in \J(\alpha_-)$ on a neighborhood of $W^-(\alpha_-)$,
 \item $J$ is $\omega_{\xi}$-compatible
\end{itemize}
is an almost-complex structure with cylindrical ends. The set of such almost-complex structures will be denoted by $\J(J_-,J_+)$.  It is well known that this is a non-empty contractible set.

\subsection{Splitting almost complex structures}
\label{sec:splittingacs}

Suppose we are given contact forms $\alpha_- < \alpha < \alpha_+$ supporting $\xi$. Let $J_- \in \J(\alpha_-)$, $J \in \J(\alpha)$ and $J_+ \in \J(\alpha_+)$ and consider almost complex structures $J_1 \in \J(J_-,J)$, $J_2 \in \J(J,J_+)$. Let us denote by $g_c(\theta) = e^c\theta$ the $\R$-action on $W$. Given $R\geq 0$, define the almost complex structure
\begin{equation}
\label{eq:splitcs}
J_1 \circ_R J_2 = \left\{ \begin{aligned} &(g_{-R})^*J_2 \ \mbox{ on } W^+(\alpha) \\ &(g_R)^*J_1 \ \mbox{ on } W^-(\alpha) \end{aligned} \right.
\end{equation}
which is smooth since $J$ is $\R$-invariant. We call $J_1 \circ_R J_2$ a splitting almost complex structure. Note that if $\epsilon_0>0$ is small enough then $J_1 \circ_R J_2 \in \J(J_-,J_+)$ for all $0 < R \leq \epsilon_0$.

We will show that $J_1 \circ_R J_2$ is biholomorphic to a complex structure in $\J(J_-,J_+)$ for every $R$ sufficiently large. For this, given $R>0$ take a function $\varphi_R:\R\to \R$ satisfying $\varphi_R(r) = r+R$ if $r\leq -R-\epsilon_0$, $\varphi_R(r) = r-R$ if $r\geq R+\epsilon_0$ and $\varphi_R' > 0$ everywhere. This function can be chosen so that $\sup_{R,r} |\varphi_R'(r)| \leq 1$ and 
\[
\inf_{R>0} \inf \{ \varphi_R'(r) \mid r\in (-\infty,-R] \cup [R,+\infty)\} \geq \frac{1}{2}.
\]
In particular, $\varphi_R^{-1}$ has derivative bounded in the intervals $(-\infty,\varphi_R(-R)]$ and $[\varphi_R(R),+\infty)$ uniformly in $R$. Consider the diffeomorphisms $\psi_R : \R\times M \to \R\times M$ given by $\psi_R(r,x) = (\varphi_R(r),x)$ and define
\[
\Phi_R = \Psi_\alpha^{-1} \circ \psi_R \circ \Psi_\alpha : W \to W,
\]
where $\Psi_\alpha$ is defined in \eqref{eq:symp}. It is straightforward to check that
\begin{equation}
\label{eq:jsplit}
J^\pr_R := (\Phi_R)_*(J_1 \circ_R J_2)
\end{equation}
belongs to $\J(J_-,J_+)$, for every $R$ sufficiently large.

\subsection{Finite energy curves in symplectizations}
\label{sec:curves_symplectizations}

Let  $\Sigma = S^2\setminus\Gamma$ be a punctured rational curve, where $S^2$ is endowed with a complex structure $j$ and $\Gamma = \{x,y_1,...,y_s\}$ is the set of  (ordered) punctures of $\Sigma$. Fix a non-degenerate contact form $\alpha$ for $\xi$ and a cylindrical almost complex structure $J \in \J(\alpha)$. A holomorphic curve from $\Sigma$ to $W$ is a smooth map $u: (\Sigma,j) \to (W,J)$ satisfying $du \circ j = J \circ du$. Its Hofer energy is defined as
\[
E(u) = \sup_{\phi \in \Lambda} \int_\Sigma u^*d\alpha^\phi,
\]
where $\Lambda = \{\phi: \R \to [0,1]; \phi^\pr \geq 0\}$ and $\alpha^\phi := \Psi_\alpha^*(\phi(r)\alpha)$. Let $r=\pi_1\circ \Psi_\alpha\circ u$ and $\tilde u=\pi_2 \circ \Psi_\alpha\circ u$, where $\pi_1: \R \times M \to \R$ and $\pi_2: \R \times M \to M$ are the projections onto the first and second factor respectively. Consider a holomorphic curve $u$ with finite Hofer energy such that $r(z) \to \infty$ as $z\to x$ and $r(z) \to -\infty$ as $z\to y_i$ for $i=1,\dots,s$ (we say that $x$ is the positive puncture and $y_1,\dots,y_s$ are the negative punctures of $u$). Then it can be proved that there are  polar coordinates $(\rho,\theta)$ centered at each puncture $p$ of $\Sigma$ such that
\begin{equation}
\label{eq:punctures}
\lim_{\rho\to 0} \u(\rho,\theta) = 
\begin{cases}
\gamma(-T\theta/2\pi) \text{ if $p=x$} \\
\gamma_i(T_i\theta/2\pi) \text{ if $p=y_i$ for some $i=1,\ldots,s$}.
\end{cases}
\end{equation}
where $\gamma$ and $\gamma_i$ are periodic orbits of $R_\alpha$ of periods $T$ and $T_i$ respectively. Denote the set of such holomorphic curves by $\widehat\M(\gamma,\gamma_1,...,\gamma_s;J)$ and notice that $j$ is not fixed. Define an equivalence relation $\simeq$ on $\widehat\M(\gamma,\gamma_1,...,\gamma_s;J)$ by saying that $u$ and $u^\pr$ are equivalent if there is a biholomorphism $\vr: (S^2,j) \to (S^2,j^\pr)$ such that $\vr(p)=p$ for every $p \in \Gamma$ and $u = u^\pr \circ \vr$. Define the moduli space $\M(\gamma,\gamma_1,...,\gamma_s;J)$ as $\widehat\M(\gamma,\gamma_1,...,\gamma_s;J)/\simeq$.

A crucial ingredient in order to understand the moduli space $\M(\gamma,\gamma_1,...,\gamma_s;J)$ is the linearized Cauchy-Riemann operator $D_{(u,j)}$ associated to each holomorphic curve $(u,j) \in \widehat\M(\gamma,\gamma_1,...,\gamma_s;J)$. If $D_{(u,j)}$ is surjective for every $(u,j) \in \widehat\M(\gamma,\gamma_1,...,\gamma_s;J)$ then $\M(\gamma,\gamma_1,...,\gamma_s;J)$ is either empty or a smooth manifold with dimension given by
\[
\rcz(\gamma) - \sum_{i=1}^s \rcz(\gamma_i).
\]
Unfortunately, this surjectivity cannot be achieved in general and suitable multi-valued perturbations of the Cauchy-Riemann equations are needed in general to furnish a nice structure for $\M(\gamma,\gamma_1,...,\gamma_s;J)$ \cite{HWZ10, HWZ11, HN}. However, these problems can be avoided if we restrict ourselves to the space of \emph{somewhere injective} holomorphic curves. More precisely, $u$ is somewhere injective if there exists $z \in S^2\setminus\Ga$ such that $\u^{-1}(\u(z))=\{z\}$ and $\pi\circ d\u(z) \neq 0$, where $\pi: T_{\u(z)}M \to \xi_{\u(z)}$ is the projection along the Reeb vector field $R_\alpha$ in $\u(z)$. Consider the moduli space $\M_{\text{si}}(\gamma,\gamma_1,...,\gamma_s;J)$ given by somewhere injective curves in $\widehat\M(\gamma,\gamma_1,...,\gamma_s;J)$.

\begin{theorem}[Dragnev \cite{Dra}]
There exists a residual subset $\Jreg(\gamma,\gamma_1,...,\gamma_s) \subset \J(\alpha)$ such that the moduli space $\M_{\text{si}}(\gamma,\gamma_1,...,\gamma_s;J)$ is either empty or a smooth manifold with dimension given by $\rcz(\gamma) - \sum_{i=1}^s \rcz(\gamma_i)$ for every $J \in \Jreg(\gamma,\gamma_1,...,\gamma_s)$.
\end{theorem}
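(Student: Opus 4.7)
The plan is to set this up as a Sard--Smale argument on a universal moduli space, following the standard transversality scheme adapted to the symplectization setting. First I would fix a finite collection of asymptotic data $(\gamma,\gamma_1,\dots,\gamma_s)$ and introduce a Banach manifold $\B^{k,p,\delta}$ of maps $u : \Sigma \to W$ whose punctures converge exponentially to the prescribed Reeb orbits, modeled on weighted Sobolev spaces $W^{k,p,\delta}$ with small exponential weight $\delta>0$ chosen smaller than the spectral gap of the asymptotic operators at each orbit (here we use non-degeneracy of $\gamma, \gamma_i$). Over $\B^{k,p,\delta} \times \J(\alpha)$ I would construct a Banach bundle $\mathcal{E}$ whose fiber at $(u,J)$ is $W^{k-1,p,\delta}(\Lambda^{0,1}T^*\Sigma \otimes_J u^*TW)$, together with the Cauchy--Riemann section $\bar\partial(u,J) = \tfrac12(du + J \circ du \circ j)$. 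Its linearization at a zero $(u,J)$ has the form
\[
\mathcal{L}_{(u,J)}(\eta,Y) \;=\; D_{(u,j)}\eta \;+\; \tfrac{1}{2}\, Y(u) \circ du \circ j,
\]
where $D_{(u,j)}$ is the standard linearized Cauchy--Riemann operator. The latter is Fredholm of index $\rcz(\gamma) - \sum_i \rcz(\gamma_i)$ by the Riemann--Roch theorem for punctured surfaces with the chosen weights, and taking into account the $2$-dimensional freedom in varying $j$ on $S^2$ modulo $\operatorname{Aut}(S^2,j)$ fixing the punctures (which accounts exactly for the reduction from $\cz$ to $\rcz$).

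The core analytic step will be to prove that $\mathcal{L}_{(u,J)}$ is surjective at every somewhere injective $(u,J) \in \bar\partial^{-1}(0)$, so that the universal moduli space $\M^{\text{univ}}_{\text{si}}$ of somewhere injective solutions is a Banach manifold. I would argue by contradiction: suppose $\eta \in L^q$ (for the dual exponent $q$) annihilates the image of $\mathcal{L}_{(u,J)}$. Testing against the first component, one obtains $D_{(u,j)}^* \eta = 0$, and unique continuation for the linear Cauchy--Riemann operator (Aronszajn's theorem) implies that $\eta$ vanishes identically whenever it vanishes on a nonempty open set. Testing against perturbations $Y$ of the second component, one concludes $\langle \eta(z), Y(u(z)) \circ du(z) \circ j \rangle = 0$ for every admissible $Y$ and every $z$ where $Y$ is supported. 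The point $z_0 \in \Sigma$ witnessing somewhere injectivity satisfies $\tilde u^{-1}(\tilde u(z_0)) = \{z_0\}$ and $\pi \circ d\tilde u(z_0) \neq 0$, and by openness of both conditions the set of such points is nonempty and open. On a small neighborhood of $z_0$, $\tilde u$ is an embedding transverse to the Reeb direction, so any perturbation $Y$ of $J$ supported in a small neighborhood of $\tilde u(z_0) \in M$ (extended $\R$-invariantly on $W$) pulls back to a freely chosen bundle endomorphism of $u^*\xi$ in a neighborhood of $z_0$.

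The subtle point, which is the main obstacle, is precisely this last assertion: one must show that the class of admissible perturbations $Y$, namely those preserving the conditions that define $\J(\alpha)$ ($\R$-invariance, $\om_\xi$-compatibility, $\tilde J \partial_r = R_\alpha$, $\tilde J(\xi)=\xi$), is still rich enough at an injective point to span the relevant fiber algebraically. Because $Y$ must anticommute with $J$ and preserve the splitting $TW = \R\partial_r \oplus R_\alpha \oplus \xi$ up to the constraint $Y\partial_r = 0 = YR_\alpha$, it acts only on $\xi$; however, the injectivity condition $\pi \circ d\tilde u(z_0) \neq 0$ guarantees that $du(z_0) \circ j$ has a nonzero $\xi$-component, and a straightforward linear algebra computation (cf.\ the cited paper of Dragnev) shows that arbitrary $J$-antilinear endomorphisms of $\xi_{\tilde u(z_0)}$ are attained. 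Combined with $\R$-invariance of $Y$ (which is harmless since $\tilde u(z_0) \in M$ is a point), this forces $\eta(z_0) = 0$, hence by unique continuation $\eta \equiv 0$, proving surjectivity.

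With surjectivity in hand, the universal moduli space $\M^{\text{univ}}_{\text{si}}$ is a separable Banach manifold, and the projection to $\J(\alpha)$ is a Fredholm map of index $\rcz(\gamma) - \sum_i \rcz(\gamma_i)$. Applying the Sard--Smale theorem (after the usual reduction from $C^k$-regularity to $C^\infty$ via Taubes' trick of taking a countable intersection of the residual sets obtained as $k \to \infty$ and passing to a dense subset of the smooth category) produces a residual subset $\Jreg(\gamma,\gamma_1,\dots,\gamma_s) \subset \J(\alpha)$ of regular values. For $J$ in this set, $\M_{\text{si}}(\gamma,\gamma_1,\dots,\gamma_s;J)$ is the preimage of a regular value, hence either empty or a smooth manifold of the stated dimension. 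Finally, intersecting over the countable collection of admissible asymptotic tuples $(\gamma,\gamma_1,\dots,\gamma_s)$ (there are countably many Reeb orbits since $\alpha$ is non-degenerate) preserves residuality and yields the desired $\Jreg$.
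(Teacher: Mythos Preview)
The paper does not supply its own proof of this theorem: it is stated with attribution to Dragnev \cite{Dra} and used as a black box. Your outline is precisely the standard Sard--Smale transversality scheme that Dragnev carries out, so in that sense your proposal matches the intended proof.

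One small clarification worth making explicit: your remark that $\R$-invariance of $Y$ is ``harmless since $\tilde u(z_0)\in M$ is a point'' is the right idea but slightly compressed. The real issue is that an $\R$-invariant perturbation supported near $\tilde u(z_0)\in M$ affects $u$ at \emph{every} preimage of $\tilde u(z_0)$ under the $M$-projection $\tilde u$, not just at $z_0$. It is exactly the somewhere-injective condition as stated in the paper, namely $\tilde u^{-1}(\tilde u(z_0))=\{z_0\}$ for the $M$-projection rather than for $u$ itself, that guarantees there are no other such preimages, so the perturbation $Y$ acts on $\eta$ only near $z_0$ and the unique-continuation argument goes through. Your sketch already uses this condition correctly; just make sure the write-up spells out why $\R$-invariance interacts with injectivity of $\tilde u$ rather than of $u$.
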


Now notice that, since $\alpha$ is non-degenerate, the set of closed orbits of $R_\alpha$ is countable (here we are tacitly identifying a periodic orbit $\ga(t)$ with $\ga(t+c)$ where $c$ is any real number). It follows from the previous theorem that there exists a residual subset $\Jreg(\alpha) \subset \J(\alpha)$ such that $\M_{\text{si}}(\gamma,\gamma_1,...,\gamma_s;J)$ is a smooth manifold for every periodic orbits $\gamma,\gamma_1,...,\gamma_s$ and all $s \in \N$.

The following proposition gives sufficient conditions to guarantee that every holomorphic curve in $\widehat\M(\gamma,\gamma_1,...,\gamma_s;J)$ is somewhere injective. This will be crucial for us in the proof of Theorems A and B.

\begin{proposition}
\label{prop:somewhere_injective}
If $\gamma$ is simple then every holomorphic curve in $\widehat\M(\gamma,\gamma_1,...,\gamma_s;J)$ is somewhere injective. If $s=1$ and $\gamma_1$ is simple then every holomorphic cylinder in $\widehat\M(\gamma,\gamma_1;J)$ is somewhere injective.
\end{proposition}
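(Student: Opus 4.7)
The approach is to invoke the standard factorization theorem for finite-energy punctured $J$-holomorphic spheres in a symplectization: given any $u \in \widehat\M(\gamma,\gamma_1,\dots,\gamma_s;J)$, there exist a somewhere injective finite-energy holomorphic curve $v \colon (S^2\setminus\Gamma',j') \to (W,J)$ together with a holomorphic branched covering $\phi \colon (S^2,j) \to (S^2,j')$ of some degree $d \geq 1$, with $\Gamma' = \phi(\Gamma)$, such that $u = v \circ \phi$. This is the punctured analogue of the classical factorization for closed holomorphic curves, and it follows from unique continuation together with the asymptotic analysis of finite-energy ends (as developed by Hofer--Wysocki--Zehnder, with later refinements by Siefring and others). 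To prove that $u$ is somewhere injective, it therefore suffices to show that in each of the two cases one has $d = 1$.

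Suppose first that $\gamma$ is simple. The curve $u$ has a unique positive puncture $x$, and every positive puncture of $v$ must arise as the $\phi$-image of a positive puncture of $u$; hence $v$ has the single positive puncture $x' := \phi(x)$ and $\phi^{-1}(x') = \{x\}$. Consequently the local degree of $\phi$ at $x$ equals the total degree $d$ of $\phi$. Comparing the asymptotic behaviour in \eqref{eq:punctures} at $x$ and at $x'$, the asymptotic Reeb orbit $\gamma'$ of $v$ at $x'$ satisfies $\gamma(t) = \gamma'(d\,t)$ up to reparametrization, so that the period of $\gamma'$ is $T/d$, where $T$ is the period of $\gamma$. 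Since $\gamma$ is simple, $T$ is already the minimal period of its underlying embedded orbit, forcing $T/d \geq T$ and hence $d = 1$. Thus $u = v$ is somewhere injective.

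For the second case, where $s=1$ and $\gamma_1$ is simple, exactly the same reasoning is applied at the unique \emph{negative} puncture $y_1$ of the cylinder: one has $\phi^{-1}(\phi(y_1)) = \{y_1\}$, the local multiplicity of $\phi$ at $y_1$ equals $d$, and $v$ is asymptotic at $\phi(y_1)$ to a closed Reeb orbit of period $T_1/d$, where $T_1$ is the period of $\gamma_1$. Simplicity of $\gamma_1$ again forces $d=1$. The only non-routine input in either case is the factorization theorem itself; this is the principal technical obstacle, but it is by now a well-established tool, and the remainder of the argument is a short multiplicity count at the distinguished puncture.
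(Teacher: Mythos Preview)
Your proof is correct and follows essentially the same approach as the paper: both invoke the factorization theorem (which the paper cites as \cite[Theorem 1.3]{Dra}) to write $u = v\circ\phi$ with $v$ somewhere injective, observe that the preimage of the distinguished puncture is a single point so the branching index there equals $\deg\phi$, and then use simplicity of the asymptotic orbit to force $\deg\phi = 1$. The paper phrases this as a proof by contradiction while you argue directly, but the content is identical.
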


\begin{proof}
Suppose that the first claim is not true and let $u \in \widehat\M(\gamma,\gamma_1,...,\gamma_s;J)$ be a curve that is not somewhere injective. Then there exist a holomorphic branched covering $\phi: S^2 \to S^2$ with degree bigger than one and a somewhere injective holomorphic curve $u^\pr: \Sigma^\pr \to W$ such that $u=u^\pr \circ \phi$, where $\Sigma^\pr=S^2\setminus\Ga^\pr:=S^2\setminus\phi(\Ga)$, see \cite[Theorem 1.3]{Dra}. Let $x^\pr=\phi(x)$, where $x$ is the positive puncture of $u$, and notice that $x^\pr$ is a positive puncture of $u^\pr$. Since $u$ has only one positive puncture, $\phi^{-1}(x^\pr)=\{x\}$. Thus, the branching index of $x$ is equal to the degree of $\phi$ which is bigger than one. But this implies that $\gamma$ is not simple, contradicting our hypothesis. The proof of the second claim is absolutely analogous.
\end{proof}

\subsection{Finite energy curves in symplectic cobordisms}
\label{sec:curves in cobordisms}

Consider non-degenerate contact forms $\alpha_- < \alpha_+$ for $\xi$, $J_\pm \in \J(\alpha_\pm)$ and $J \in \J(J_-,J_+)$. The energy of a holomorphic curve $u: (\Sigma,j) \to (W,J)$ is defined as
\[
E(u) = E_-(\u) + E_+(\u) + E_0(\u),
\]
where
\begin{equation*}
\begin{aligned}
& E_-(u) = \sup_{\phi\in\Lambda} \int_{u^{-1}(W^-(\alpha_-))} u^*d\alpha_{-}^\phi \\
& E_+(u) = \sup_{\phi\in\Lambda} \int_{u^{-1}(W^+(\alpha_+))} u^*d\alpha_{+}^\phi \\
& E_0(u) = \int_{u^{-1}(\overline W(\alpha_-,\alpha_+))} u^*\omega_\xi.
\end{aligned}
\end{equation*}
Holomorphic curves with finite energy in cobordisms are asymptotic to periodic orbits of $\alpha_+$ at the positive punctures and to periodic orbits of $\alpha_-$ at the negative punctures in the sense described in \eqref{eq:punctures}. Similar results to those described in the previous section hold for complex structures with cylindrical ends. In particular, given $J_\pm \in \Jreg(\alpha_\pm)$ we have a residual subset $\Jreg(\gamma,\gamma_1,...,\gamma_s;J_-,J_+) \subset \J(J_-,J_+)$ such that if $J \in \Jreg(\gamma,\gamma_1,...,\gamma_s;J_-,J_+)$ then the moduli spaces $\M_{\text{si}}(\gamma,\gamma_1,...,\gamma_s;J)$ is either empty or a smooth manifold with dimension
\[
\rcz(\gamma) - \sum_{i=1}^s \rcz(\gamma_i)
\]
for every periodic orbit $\gamma$ of $\alpha_+$ and periodic orbits $\gamma_1,...,\gamma_s$ of $\alpha_-$. Denote by $\Jreg(J_-,J_+)$ the residual subset of $\J(J_-,J_+)$ given by the intersection of $\Jreg(\gamma,\gamma_1,...,\gamma_s;J_-,J_+)$ running over all the periodic orbits of $\alpha_+$ and $\alpha_-$ and $s \in \N$.

\subsection{Finite energy curves in splitting cobordisms}
\label{sec:cobordisms}

Consider non-degenerate contact forms $\alpha_- < \alpha < \alpha_+$ for $\xi$, $J_\pm \in \J(\alpha_\pm)$, $J \in \J(\alpha)$ and $J_1 \in \J(J_-,J)$, $J_2 \in \J(J,J_+)$. Given $R\geq 0$ we have the splitting almost complex structure $J_1 \circ_R J_2$ defined in Section \ref{sec:splittingacs}. The energy of a holomorphic curve $u: (\Sigma,j) \to (W,J_1 \circ_R J_2)$ is defined as
\begin{equation}
\label{eq:energy_splitting}
E(u) = E_{\alpha_-}(u) + E_{\alpha_+}(u) + E_\alpha(u) + E_{(\alpha,\alpha_+)}(u) + E_{(\alpha_-,\alpha)}(u)
\end{equation}
where
\begin{equation*}
\begin{aligned}
& E_{\alpha_+}(u) = \sup_{\phi\in\Lambda} \int_{u^{-1}(W^+(e^R\alpha_+))} u^*d\alpha_+^\phi,\ \ E_{\alpha}(u) = \sup_{\phi\in\Lambda} \int_{u^{-1}(\overline W(e^{-R}\alpha,e^R\alpha))} u^*d\alpha^\phi, \\
& E_{\alpha_-}(u) = \sup_{\phi\in\Lambda} \int_{u^{-1}(W^-(e^{-R}\alpha_-))} u^*d\alpha_-^\phi,\ \ E_{(\alpha,\alpha_+)}(u) = \int_{u^{-1}(\overline W(e^R\alpha,e^R\alpha_+))} u^*(e^{-R}\omega_\xi), \\
&\text{and}\ E_{(\alpha_-,\alpha)}(u) = \int_{u^{-1}(\overline W(e^{-R}\alpha_-,e^{-R}\alpha))} u^*(e^R\omega_\xi).
\end{aligned}
\end{equation*}
Finite energy holomorphic curves for splitting almost complex structures share all the properties of finite energy curves for almost complex structures with cylindrical ends. These energies are important for the SFT-compactness results used in the proofs of Theorems A and B as explained in Section \ref{sec:proof main theorem}.

\subsection{Cylindrical contact homology}

Cylindrical contact homology is a powerful invariant of contact structures introduced by Eliashberg, Givental and Hofer in their seminal work \cite{EGH}. Let $\alpha$ be a non-degenerate contact form supporting $\xi$ and denote by $\P$ the set of periodic orbits of $R_\alpha$. A periodic orbit of $R_\alpha$ is called \emph{bad} if it is an even iterate of a prime periodic orbit whose parities of the Conley-Zehnder indexes of odd and even iterates disagree. An orbit that is not bad is called \emph{good}. Denote the set of good periodic orbits by $\P^0$.

Consider the chain complex $CC_*(\alpha)$ given by the graded group with coefficients in $\Q$ generated by good periodic orbits of $R_\alpha$ graded 
by their Conley-Zehnder indexes (throughout this paper we are not using the standard convention where the grading of contact homology is given by the reduced Conley-Zehnder index).

The boundary operator $\partial$ is given by counting rigid holomorphic cylinders in the symplectization $W$ of $M$. More precisely, let $J \in \J(\alpha)$ and consider the moduli space $\M(\ga,\bar\ga;J)$ of holomorphic cylinders asymptotic to periodic orbits $\ga$ and $\bar\ga$ of $\alpha$. Since $J$ is cylindrical, if $\ga \neq \bar\ga$ then $\M(\ga,\bar\ga;J)$ carries a free $\R$-action induced by the $\R$-action $c\cdot\theta \mapsto e^c\theta$ on $W$. Define
\[
\partial\gamma = m(\gamma)\sum_{\bar\gamma \in \P^0;\ \cz(\bar\gamma) = \cz(\gamma)-1} \sum_{F \in \M(\gamma,\bar\gamma)/\R}\frac{\text{sign}(F)}{m(F)} \bar\gamma\,,
\]
where $m(\ga)$ is the multiplicity of $\ga$, $\text{sign}(F)$ is the sign of $F$ determined by a coherent orientation of $\M(\gamma,\bar\gamma)$ \cite{BM} and $m(F)$ is the cardinality of the group of automorphisms of $F$ \cite{Bo2}. Suppose that $HC_*(\alpha)$ is well defined, that is, $\partial$ is well defined and $\partial^2=0$ (in particular, we are not addressing the question of invariance). Then clearly cylindrical contact homology has natural filtrations in terms of the action and the free homotopy classes of the periodic orbits. More precisely, let $A_\alpha(\ga) = \int_\gamma \alpha$ be the action of a periodic orbit $\ga \in \P$ and take a real number $T>0$. Consider the chain complex $CC^{a,T}_*(\alpha)$ generated by good periodic orbits of $R_\alpha$ with free homotopy class $a$ and action less than $T$. This is a subcomplex of $CC_*(\alpha)$ and its corresponding homology is denoted by $HC^{a,T}_*(\alpha)$.

Under suitable transversality assumptions, it can be proved that $\partial$ is well defined and $\partial^2=0$ if $\alpha$ does not have contractible periodic orbits with reduced Conley-Zehnder index $1$. Although in general these transversality issues still have to be fixed, contact homology can be rigorously defined in some particular instances, see \cite{BO1,BO2,HMS,HN}. Two such situations are particularly relevant in this work. The first one is when the chain complex is lacunary (that is, $CC_k(\alpha) = 0$ whenever $CC_{k-1}(\alpha) \neq  0$). Then the boundary vanishes identically and consequently we obviously achieve the relation $\partial^2=0$. The second situation is when we consider a free homotopy class with only simple periodic orbits and there is no contractible periodic orbit (in the relevant action window). In this case, every holomorphic curve with one positive puncture asymptotic to one of these orbits is somewhere injective and therefore the contact homology is well defined by standard arguments.

\section{Proof of Theorems A and B}
\label{sec:proof main theorem}

Firstly, note that the hypothesis that $c_1(TN) = \lambda\om$ for some constant $\lambda \in \R$ is equivalent to the condition that $c_1(\xi)|_{H_2(M,\Z)}=0$. In fact, $c_1(\xi)|_{H_2(M,\Z)}=0$ if and only if $c_1(\xi)$ vanishes as an element of $H^2(M,\R)$. But $c_1(\xi)=\pi^*c_1(TN)$, where $\pi: M \to N$ is the quotient projection, and the kernel of $\pi^*: H^2(N,\R) \to H^2(M,\R)$ is generated by the cohomology class of $\om$.

Consider the quotient $\bM=M/G$ and let $\tau: M \to \bM$ be the projection. Denote the cardinality of $G$ by $m$ so that $G \simeq \Z_m$. By the $G$-invariance of $\alpha$, it induces a contact form $\balpha$  on $\bM$ such that $\tau_*R_{\alpha}=R_\balpha$. Analogously, $\beta$ induces a contact form $\bbeta$ on $\bM$ whose Reeb flow $R_\bbeta$ generates a free $S^1$-action (fix the period of the orbits of $R_\bbeta$ equal to one). Let $\ba$ be the free homotopy class of the (simple) orbits of $R_\bbeta$. Note that $\tau_*a = \ba^m$. Let $\bxi$ be the kernel of $\bbeta$. Clearly, if $m=1$ then $\bM=M$, $\bxi=\xi$, $\balpha=\alpha$, $\bbeta=\beta$ and $\ba=a$. Since $\bM$ is the prequantization of $(N,m\omega)$ it follows from the monotonicity of $N$ that $c_1(\bxi)|_{H_2(\bM,\Z)}=0$.

Now, choose a simple orbit $\bvr$ of $R_\bbeta$ as a reference loop for the homotopy class $\ba$. It corresponds to a fiber of the circle bundle $\bpi: \bM \to N$ over some point $q \in N:=M/S^1$. Take the pullback of a (symplectic) frame in $T_qN$ to define a symplectic trivialization $\Psi_t: \bxi(\bvr(t)) \to \R^{2n}$ over $\bvr$. Given a  periodic orbit $\gamma$ in the free homotopy class $\ba$, consider a cylinder connecting $\gamma$ and $\bvr$ and use this to define a trivialization $\Psi$ of $\bxi$ over $\gamma$; see Section \ref{sec:index_orbits}. Since $c_1(\bxi)|_{H_2(\bM,\Z)}=0$, this trivialization is well defined up to homotopy. Moreover, the Robbin-Salamon index of $\bvr$ clearly vanishes with respect to this trivialization.

Similarly, choose a simple orbit $\vr$ of $R_\beta$ as a reference loop for the free homotopy class $a$. Since the conditions on $\alpha$ in Theorems A and B do not depend on the choice of the trivialization of $\xi$ over $\vr$ (see Remark \ref{rmk:trivialization Thm A}), we can take the trivialization induced by $\Psi^m$ so that $\rs(\vr)=0$ with respect to this trivialization. Theorems A and B then clearly follow from the next two results.

\begin{theorem}
\label{Thm 1}
Under the assumptions of Theorem A, there is in case (H1) a periodic orbit $\gamma_\mi$ of $\balpha$ with free homotopy class $\ba$ such that if every contractible periodic orbit $\ga$ of $\alpha$ satisfies $\czl(\ga;\Psi^m)\geq -n$ then $\czl(\ga_\mi;\Psi) \leq -n$. In case (H2), there is a periodic orbit $\gamma_\ma$ of $\alpha$ with free homotopy class $\ba$ satisfying $\czu(\ga_\ma;\Psi) \geq n$.
\end{theorem}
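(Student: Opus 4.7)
The proof plan rests on three ingredients: a Morse--Bott computation of the cylindrical contact homology of $(\bM,\bxi)$ in the class $\ba$, a non-degenerate perturbation of $\balpha$, and a neck-stretching argument in a cobordism built from rescaled copies of the reference form $\bbeta$.

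First I would make the reference computation explicit. Perturb $\bbeta$ to a non-degenerate contact form $\bbeta'$ via the standard Morse--Bott procedure, using an even Morse function on $N$ in case (H1) and an arbitrary one in case (H2). The short orbits of $\bbeta'$ in the class $\ba$ are in bijection with the critical points of the Morse function; by our choice of trivialization $\Psi$, the reference orbit $\bvr$ has $\rs(\bvr;\Psi)=0$, and the Conley--Zehnder indices of these short orbits therefore lie between $-n$ and $n$. The resulting chain complex is lacunary, so its homology is $HC^{\ba}_\ast(\bM,\bxi)\cong H_{\ast+n}(N)$, with non-zero extreme degrees $k^{\ba}_-=-n$ (generated by $H_0(N)$, i.e.\ a minimum) in case (H1) and $k^{\ba}_+=n$ (generated by $H_{2n}(N)$, i.e.\ a maximum) in case (H2). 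In case (H1), the auxiliary hypotheses on $\rs(\vr)$ and on $c_1(TN)$ ensure that in passing from $M$ to $\bM$ one does not pick up contractible orbits of too low a reduced index that could corrupt this computation; the assumption on contractible $\alpha$-orbits transfers this control to $\balpha$.

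Next I would take a small non-degenerate perturbation $\balpha'$ of $\balpha$ with no new short orbits in the relevant action window, and embed $\balpha'$ inside a symplectic cobordism between $\epsilon\bbeta'$ (with $\epsilon$ small) and $C\bbeta'$ (with $C$ large). The generator of $HC^{\ba}_{-n}$ at the small end and its counterpart at the large end are connected, by invariance of contact homology, by at least one non-trivial holomorphic cylinder in this cobordism; the argument in case (H2) is analogous, using the $H_{2n}(N)$ generator at degree $n$ instead. Now stretch the neck along the hypersurface corresponding to $\balpha'$: by SFT compactness, the cylinder degenerates into a holomorphic building. The middle level consists of finite-energy holomorphic curves in the symplectization of $\balpha'$, and a dimension/action count together with the additivity of indices along the building forces at least one such level to carry a cylinder whose positive (resp.\ negative) puncture is asymptotic to a periodic orbit $\ga_\mi$ (resp.\ $\ga_\ma$) of $\balpha'$ in class $\ba$ with $\cz(\ga_\mi)\leq -n$ in case (H1) and $\cz(\ga_\ma)\geq n$ in case (H2). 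Finally, let the perturbation $\balpha'\to\balpha$: by Arzel\`a--Ascoli and the uniform action bound inherited from the cobordism, a subsequence of the $\ga_\mi$ (resp.\ $\ga_\ma$) converges to a periodic orbit of $\balpha$ in class $\ba$, and the lower (resp.\ upper) semicontinuity of $\czl$ (resp.\ $\czu$) delivers the required bound.

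The main obstacle is carrying out the neck-stretching compactness rigorously \emph{without} invoking the general transversality machinery of contact homology. This is precisely where the minimality (resp.\ maximality) of the index and the action of the target orbit is crucial: every relevant holomorphic curve has to have simple asymptotes at the extremal puncture, so Proposition~\ref{prop:somewhere_injective} applies and all the curves are somewhere injective, putting Dragnev's theorem in charge of transversality. A second subtlety is ruling out sphere bubbling from $\pi_2(N)$ in case (H1); this is where the monotonicity hypothesis on $c_1(TN)$ and $\rs(\vr)>2$ enter, through a Fredholm-index count that forbids any such bubble from appearing in a building of the dimension we care about. Case (H2) bypasses this difficulty entirely since $\pi_2(N)=0$, which is why the existence of $\ga_\ma$ is unconditional there.
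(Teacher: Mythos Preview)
Your overall architecture matches the paper's: compute $HC^{\ba,T}_*(\bbeta')$ via a Morse--Bott perturbation, sandwich $\balpha'$ between two rescalings of $\bbeta'$, stretch the neck, analyse the limiting building, and then let the perturbation go to zero. Two points, however, are either glossed over or misdescribed.

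First, the sentence ``by invariance of contact homology, [there is] at least one non-trivial holomorphic cylinder in this cobordism'' is exactly the step the paper cannot take for free, and you do not indicate how you would replace it. The paper does \emph{not} appeal to invariance of $HC$; instead it produces the cylinder by hand. For a cleverly chosen $J\in\J(J_-,J_+)$ (built from $J_+$ by a rescaling diffeomorphism so that $J$ is biholomorphic to $J_+$), the moduli space $\M(\ga^+_\mi,\ga^-_\mi;J)$ is identified with $\M(\ga^+_\mi,\ga^+_\mi;J_+)$, which contains only the trivial cylinder; hence the mod $2$ count is $1$. One then shows this count is independent of the regular $J$ by a cobordism-of-moduli-spaces argument, using that the asymptote $\ga^\pm_\mi$ has minimal action (so the top and bottom $J_\pm$-levels of any SFT limit are trivial) and that all indices involved are even (so no index $-1$ curves appear). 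This is the content of Lemmas~5.7--5.9, and it is where the simplicity of $\ga^\pm_\mi$ and Proposition~\ref{prop:somewhere_injective} actually enter. Your later paragraph correctly flags somewhere-injectivity as the transversality mechanism, but the existence of the initial cylinder needs this explicit construction, not an abstract invariance statement.

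Second, your diagnosis of the role of $c_1(TN)$ and $\rs(\vr)>2$ is off. There is no sphere bubbling in the symplectization (the symplectic form is exact). These hypotheses are used for two index computations: (i) via~\eqref{eq:bound_c_1(TN)} they force $\rs(\bvr^k;\Psi)\geq 2$ for every $k\in N_{\ba}$, so that $HC^{\ba,T}_{-n}(\bbeta'_\pm)$ has rank one and $\ga^\pm_\mi$ is the \emph{unique} generator; and (ii) they guarantee that every contractible periodic orbit of $\bbeta'_-$ (and, under the stated hypothesis on $\alpha$, of $\balpha'$) with small action has $\rcz\geq 2>0$. Point (ii) is what rules out extra negative punctures on the $\bar J_2$-level of the limiting building: the somewhere injective curve $u^l$ positively asymptotic to $\ga^+_\mi$ could a priori have several contractible negative asymptotes, and the inequality $\rcz\geq 2$ for each of them, combined with $\mathrm{index}(u^l)\geq 0$, forces $u^l$ to be a cylinder with $\cz(\ga'_\mi)\leq -n$. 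A minor related slip: in case~(H2) the chain complex is not lacunary (the Morse function is only polar, not even); the differential is well defined because all orbits in class $\ba$ are simple, and the homology is computed by Morse--Bott techniques rather than by vanishing of $\partial$.
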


\begin{theorem}
\label{Thm 2}
Let $\ga$ be a periodic orbit of $\balpha$ such that $\czl(\ga;\Psi) \leq -n$ (resp. $\czu(\ga;\Psi) \geq n$). Suppose that there exists an integer $j>1$ such that  $\czl(\ga^j;\Psi^j) \geq -n$ (resp. $\czu(\ga^j;\Psi^j) \leq n$). Then $\ga$ is elliptic and the equality holds in the previous inequalities, that is, $\czl(\ga;\Psi) = \czl(\ga^j;\Psi^j) = -n$ (resp. $\czu(\ga;\Psi) = \czu(\ga^j;\Psi^j) = n$).
\end{theorem}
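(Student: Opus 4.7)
The proof is linear-symplectic in character: it uses only the mean-index estimate \eqref{eq:mean index} together with an iteration inequality for the lower semicontinuous Conley--Zehnder index, applied to the linearized Poincar\'e map $P$ of $\gamma$. The \emph{resp.}\ case follows by applying the argument to the inverse path $\gamma^{-1}$ via the duality \eqref{eq:czl x czu}. The central analytic input is Long's linear-in-$k$ iteration inequality
\[
\czl(\gamma^k;\Psi^k)\ \leq\ k\,\czl(\gamma;\Psi) + (k-1)\,n\qquad (k\geq 1),
\]
a consequence of his index iteration theory (Chapter~10 of Long's \emph{Index Theory for Symplectic Paths with Applications}). It is verified block by block on the symplectic basic normal form decomposition of $P$ and is saturated only when every block is elliptic and positioned just below a resonance.

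\emph{Step 1 (Extremality).} Setting $k=j$ in the iteration inequality and combining with $\czl(\gamma^j;\Psi^j) \geq -n$ yields $-n \leq j\,\czl(\gamma;\Psi) + (j-1)\,n$, which rearranges to $\czl(\gamma;\Psi) \geq -n$. Together with the first hypothesis this forces $\czl(\gamma;\Psi) = -n$, and feeding the equality back into the iteration inequality gives $\czl(\gamma^j;\Psi^j) \leq -n$. Hence both equalities in the conclusion hold.

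\emph{Step 2 (Ellipticity).} Decompose $P = P_1 \oplus \cdots \oplus P_s$ into Long's basic symplectic normal forms, with $\dim P_i = 2n_i$ and $\sum n_i = n$. The iteration inequality holds block-by-block, and summing over blocks recovers the global one; given the two equalities of Step~1, the global inequality must be saturated, which forces block-by-block saturation
\[
\czl_i(\gamma^j) = j\,\czl_i(\gamma) + (j-1)\,n_i \qquad\text{for every } i.
\]
A direct case check on each non-elliptic Long block (positive hyperbolic, negative hyperbolic, complex saddle) shows that this equality fails for every $j>1$: $\czl_i(\gamma^k)$ is either constant in $k$, or alternates between two values, or grows at a strictly slower rate than the right-hand side. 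Hence every $P_i$ has spectrum on the unit circle, so $\gamma$ is elliptic.

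\emph{Main obstacle.} The delicate ingredient is the linear-in-$k$ iteration inequality itself. The $\pm n$ bound \eqref{eq:mean index} is immediate, but the linear control of $\czl(\gamma^k)$ requires Long's normal form together with the careful use of his splitting numbers $S^\pm(1)$ at the eigenvalue~$1$. Extra care is needed when $\gamma^j$ is degenerate even though $\gamma$ is not, which is precisely the reason the lower semicontinuous extension $\czl$ (rather than the Robbin--Salamon index $\rs$) must be used throughout. Once the iteration inequality is available, the block-by-block exclusion of non-elliptic normal forms in Step~2 is purely combinatorial.
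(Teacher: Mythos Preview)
Your argument is correct, and it reaches the conclusion by a somewhat different route than the paper's own proof, though both live entirely inside Long's index iteration theory.

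The paper works directly with the Bott index function $\Gamma:S^1\to\Z$ associated to $\gamma$. From Bott's formula $\czl(\gamma^j)=\sum_{z^j=1}\Gamma(z)$ and the hypotheses, one finds a point $z\in S^1$ with $\Gamma(z)-\Gamma(1)\geq n$; the splitting--number inequalities $0\leq S^\pm(w)$ and $S^+(w)+S^-(w)\leq m(w)$ then force the total algebraic multiplicity of eigenvalues on $S^1$ to be at least $2n$, which is ellipticity. The equality $\czl(\gamma)=\czl(\gamma^j)=-n$ is obtained at the end, from the impossibility of $\Gamma(z)-\Gamma(1)>n$.

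Your version instead packages the Bott--function estimate as the iteration inequality $\czl(\gamma^k)\leq k\,\czl(\gamma)+(k-1)n$ (equivalently, $\Gamma(z)-\Gamma(1)\leq n$ for all $z$), which indeed follows from Long's upper iteration bound $i(\gamma,m)+\nu(\gamma,m)\leq m(i(\gamma,1)+n)-n+\nu(\gamma,1)$ together with $\nu(\gamma,1)\leq\nu(\gamma,m)$. This gives the two equalities immediately in Step~1. For Step~2 you switch to the basic--normal--form decomposition and exclude non-elliptic blocks; this works because any block with no eigenvalue on $S^1$ has a constant Bott function, hence $\czl_i(\gamma^k)=k\,\czl_i(\gamma)$, which is strictly below $k\,\czl_i(\gamma)+(k-1)n_i$ for $k>1$. (Your phrase ``alternates between two values'' is not quite what happens, but the conclusion is unaffected.)

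In short: the paper counts multiplicities on $S^1$ via splitting numbers; you cite the iteration inequality and then kill non-elliptic blocks one normal form at a time. The paper's route is a bit more self-contained, while yours is more modular and makes the role of the saturation transparent.
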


The next sections are devoted to the proofs of these theorems. The proof of Theorem \ref{Thm 1} is harder since it involves all the machinery of holomorphic curves. With the transversality assumption (see Remark \ref{rmk:transversality1}) the argument can be considerably simplified and some assumptions of Theorem A can be relaxed, as will be explained in the proof; see Remarks \ref{rmk:transversality1}, \ref{rmk:transversality2} and \ref{rmk:transversality3}. The proof of Theorem \ref{Thm 2}, in turn, does not rely on holomorphic curves at all and is based on Bott's formula for symplectic paths developed by Y. Long \cite{Lon02,Lon99}.

\subsection{Preliminaries for the proof of Theorem \ref{Thm 1}}

We start with the following topological fact.

\begin{proposition}
\label{prop:homotopy}
Let $N_\ba=\{k \in \N; \ba^k=\ba\}$. Then $N_\ba=\{1\}$ if and only if $\omega|_{\pi_2(N)}=0$.
\end{proposition}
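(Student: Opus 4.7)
The plan is to analyze the condition $\ba^k=\ba$ as free homotopy classes using the long exact homotopy sequence of the principal $S^1$-bundle $\bpi:\bM\to N$, together with centrality of the fiber class in $\pi_1(\bM)$.

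First, since $S^1$ is abelian and $\bM\to N$ is principal, the image of $i_*:\pi_1(S^1)\to\pi_1(\bM)$ is central: for $\eta\in\pi_1(S^1)$ and $\delta\in\pi_1(\bM)$ the map $(s,t)\mapsto\eta(s)\cdot\delta(t)$ (using the principal action) exhibits $i_*(\eta)\cdot\delta$ and $\delta\cdot i_*(\eta)$ as the two boundary decompositions of the same map from a torus, hence they agree in $\pi_1(\bM)$. In particular $\ba$ is central. Two elements of a central subgroup are conjugate if and only if they are equal, so the free homotopy condition $\ba^k=\ba$ is equivalent to $\ba^k=\ba$ in $\pi_1(\bM)$, i.e., $\ba^{k-1}=1$. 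Identifying $\pi_1(S^1)\cong\Z$ so that $\ba$ corresponds to $1$, this is exactly the condition $k-1\in\ker(i_*)$.

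Next, exactness of
$$\pi_2(N)\xrightarrow{\partial}\pi_1(S^1)\xrightarrow{i_*}\pi_1(\bM)$$
identifies $\ker(i_*)$ with $\Lambda:=\partial(\pi_2(N))\subset\Z$. Since $\bM\to N$ is the prequantization of $(N,\om)$, its Euler class equals $[\om]\in H^2(N,\Z)$, and the standard description of the connecting homomorphism for an $S^1$-bundle gives $\partial(S)=\lg[\om],S_*[S^2]\rg$ for every $S\in\pi_2(N)$. Therefore $\Lambda$ is precisely the image of $\lg[\om],\cdot\rg:\pi_2(N)\to\Z$.

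Finally, $N_\ba=\{1\}$ is equivalent to the assertion that no positive integer lies in $\Lambda$, which for a subgroup of $\Z$ is the same as $\Lambda=\{0\}$, and this in turn is equivalent to $\om|_{\pi_2(N)}=0$. The only nontrivial points are the centrality of the fiber class and the identification of the Euler class of the prequantization with $[\om]$ together with the resulting formula for $\partial$; both are standard, so no serious obstacle is expected.
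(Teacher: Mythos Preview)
Your argument is correct and follows essentially the same route as the paper: the long exact homotopy sequence of the circle bundle $\bpi:\bM\to N$ together with the identification of $\partial$ via the Euler class. Two remarks are worth making.

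First, your use of centrality of the fiber class is a genuine simplification. The paper only invokes that $i_*(\pi_1(S^1))$ is normal (as the kernel of $\bpi_*$) and cyclic, concluding $gfg^{-1}=f^{\pm 1}$; it then has to rule out the possibility $\ba^k$ conjugate to $\ba$ via $\ba^{-1}$ separately. Your observation that principal $S^1$-bundles have central fiber class dispenses with this entirely, since for central elements conjugacy is equality.

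Second, there is a small inaccuracy that does not affect the conclusion: $\bM=M/G$ is not the prequantization of $(N,\omega)$; rather $M$ is, and the Euler class of $\bpi:\bM\to N$ is $m[\omega]$ (as the paper states), not $[\omega]$. Consequently $\Lambda=m\cdot\mathrm{im}\big(\langle[\omega],\cdot\rangle:\pi_2(N)\to\Z\big)$. Since $m\neq 0$, one still has $\Lambda=\{0\}$ if and only if $\omega|_{\pi_2(N)}=0$, so the proof goes through unchanged.
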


\begin{proof}
Since the Euler class of the $S^1$-principal bundle $\bpi: \bM \to N$ is given by $m[\omega] \in H^2(N;\Z)$, the corresponding long exact homotopy sequence is
\[
\cdots \to \pi_2(N) \overset{\partial}{\to} \pi_1(S^1) \overset{i_*}{\to} \pi_1(\bM) \overset{\bpi_*}{\to} \pi_1(N) \to \cdots
\]
where $\partial(S) = m\omega(S)[S^1]$ with $[S^1]$ being the generator of $\pi_1(S^1)$ and $i_*$ is the map induced by the inclusion. It follows from this sequence that the map $i_*$ is injective if and only if $\omega|_{\pi_2(N)}=0$. The image of $i_*$ is a normal subgroup because it is the kernel of $\bpi_*$. Since $i_*(\pi_1(S^1))$ is cyclic and normal we infer that $gfg^{-1}=f^{\pm 1}$ for every $g \in \pi_1(\bM)$, where $f=i_*[S^1]$ is the homotopy class of the fiber. So suppose that $\omega|_{\pi_2(N)}=0$. Then $f \neq f^k$ for every $k > 1$ and $f$ is freely homotopic only to $f^{\pm 1}$. Consequently $N_\ba=\{1\}$. On the other hand, if $\omega|_{\pi_2(N)}\neq 0$ then there exists $k>1$ such that $f=f^k$ and therefore $N_\ba \neq \{1\}$.
\end{proof}

Now, assume the hypothesis (H1) in Theorem A. Since $a=0$, $\omega|_{\pi_2(N)}\neq 0$. Take $k \in N_\ba$ such that $k\neq 1$. Let $\Phi_t: \bxi(\bvr^k(t)) \to \R^{2n}$ be the trivialization induced by a constant frame in $T_qN$ and let $\Psi_t: \bxi(\bvr^k(t)) \to \R^{2n}$ be the trivialization over $\bvr^k$ induced by a constant trivialization over $\bvr$ via a homotopy between $\bvr$ and $\bvr^k$ as explained above. We want to compare the Robbin-Salamon indexes of $\bvr^k$ with respect to the trivializations $\Phi$ and $\Psi$. In order to do this, consider a smooth  homotopy $F: [0,1] \times S^1 \to \bM$ such that $F(0,t)=\bvr(t)$ and $F(1,t)=\bvr(kt)$. We have that $\pi \circ F$ passes to the quotient $[0,1] \times S^1/((\{0\} \times S^1) \cup (\{1\} \times S^1)) \simeq S^2$ and therefore it induces a continuous map $f: S^2 \to N$ such that $f$ outside the poles is given by $\pi \circ F$. Clearly $\rs(\bvr^k;\Phi)=0$ and we have that
\[
\rs(\bvr^k;\Psi) = \rs(\bvr^k;\Phi) + 2\lg c_1(TN),S \rg,
\]
where $c_1(TN)$ is the first Chern class of $TN$ and $S$ is the homology class of the 2-cycle $f(S^2)$. On the other hand, we have that
\begin{equation}
\label{eq:shift}
\begin{split}
\lg [\omega],S \rg & = \int_{[0,1]\times S^1} (\pi \circ F)^*\omega = \frac{1}{m}\int_{[0,1]\times S^1} F^*(d\bbeta) \\
& = \frac{1}{m}\bigg(\int_{\bvr^k} \bbeta - \int_{\bvr} \bbeta\bigg) = \frac{1}{m}(k-1) > 0.
\end{split}
\end{equation}
(Recall here that the simple orbits of $R_\bbeta$ have period one.) Thus, it follows from our assumption on the constant of monotonicity $\lambda$ that
\begin{equation}
\label{eq:bound_c_1(TN)}
\rs(\bvr^k;\Psi) = 2\lg c_1(TN),S \rg \geq 4.
\end{equation}

A similar argument shows the following. Consider a capping disk $F: D^2 \to \bM$ such that $F|_{\partial D^2} = \bvr^m$ (note that $\bvr^m$ is contractible since $a=0$) and let $\Upsilon_t: \bxi(\bvr^m(t)) \to \R^{2n}$ be the trivialization induced by $F$. As before, $\pi \circ F$ induces a continuous map $f: S^2 \to N$ and
\[
\rs(\bvr^m;\Upsilon) = 2\lg c_1(TN),S \rg,
\]
where $S$ is the homology class of the 2-cycle $f(S^2)$. Moreover,
\[
\lg \omega,S \rg = \frac{1}{m}\int_{\bvr^m} \bbeta = 1 > 0.
\]
Hence, $\lg c_1(TN),S \rg > 0$ by our hypotheses. Consequently, if the constant of monotonicity $\lambda$ of $N$ is bigger than one we have that
\begin{equation}
\label{eq:lower_bound_index}
\rs(\bvr^m;\Upsilon) = 2\lg c_1(TN),S \rg \geq 4.
\end{equation}

\subsection{Proof of Theorem \ref{Thm 1}}
\label{sec:Proof Thm 1}

For the sake of clarity, we will split the proof in two main steps: computation of the filtered cylindrical contact homology of a suitable non-degenerate perturbation $\bbeta^\pr$ of $\bbeta$ and a neck-stretching argument for $\balpha$.

\subsubsection{Computation of $HC_*^{a,T}(\bbeta^\pr)$}
\label{sec:computationCH}

We will show that the hypotheses of the theorem imply that for a conveniently chosen constant $T>1$ the filtered cylindrical contact  homology $HC^{\ba,T}_*(\bbeta^\pr)$ of a suitable non-degenerate perturbation of $\bbeta^\pr$ of $\bbeta$ is well defined and can be computed. Well defined here means that the moduli space of holomorphic cylinders with virtual dimension zero is finite and that the square of the differential vanishes (in particular, we are not addressing the issue of the invariance of the contact homology); see Section \ref{sec:CH}.

Fix a Morse function $f: N \to \R$ and let $\bar f = f \circ \bar\pi$. The next lemma is extracted from \cite[Lemmas 12 and 13] {Bo2} and establishes that there exists a non-degenerate perturbation $\bbeta^\pr$ of $\bbeta$ such that the periodic orbits of $\bbeta^\pr$ with action less than $T$ correspond to the critical points of $f$.

\begin{lemma} \cite{Bo2}
\label{perturbation}
Given $T>1$ we can choose $\epsilon>0$ small enough such that the periodic orbits of the contact form $\bbeta^\pr:=(1+\epsilon\bar f)\bbeta$ with action less than $T$ are non-degenerate and given by the fibers over the critical points of $f$. Moreover, given a critical point $p$ of $f$ we have $\cz(\gamma_p^k) = \rs(\bvr^k) - n + \text{ind}(p)$ and $A_{\bbeta^\pr}(\gamma_p^k)=k(1+\epsilon f(p))$ for every $k$, where $\gamma_p$ is the corresponding (simple) periodic orbit and $\text{ind}(p)$ is the Morse index of $p$.\footnote{As usual, the indexes of $\gamma_p^k$ and $\bvr^k$ are computed with respect to a fixed homotopy class of trivializations induced by a trivialization over a reference loop of the free homotopy class.}
\end{lemma}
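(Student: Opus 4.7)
My plan is to follow the now-standard Bourgeois perturbation scheme, exploiting that $\bar f$ is $S^1$-invariant. First I would write $\bbeta^\pr = h\bbeta$ with $h = 1+\lambda\bar f$ and compute the Reeb vector field explicitly. Writing $R_{\bbeta^\pr} = \tfrac{1}{h}R_\bbeta + Y$ with $Y \in \bxi$ and using that $dh$ annihilates $R_\bbeta$ (since $h$ is $S^1$-invariant) together with $d\bbeta = \bpi^*\om$, the equations $\bbeta^\pr(R_{\bbeta^\pr}) = 1$ and $\iota_{R_{\bbeta^\pr}} d\bbeta^\pr = 0$ force $Y$ to be the horizontal lift of $\tfrac{\lambda}{h^2}X_f$, where $X_f$ is the $\om$-Hamiltonian vector field of $f$ on $N$. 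In particular, the projection $\bpi_* R_{\bbeta^\pr} = \tfrac{\lambda}{h^2}X_f$ vanishes exactly over the critical set of $f$.

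Given the action bound $T$, I would next argue that for $\lambda$ sufficiently small the only periodic orbits of $\bbeta^\pr$ with action less than $T$ are the fibers over critical points of $f$. Any closed orbit of $R_{\bbeta^\pr}$ with action at most $T$ has period uniformly bounded (by roughly $T$, since $h$ is close to $1$), so its projection is a closed orbit of the rescaled flow of $X_f$ of similarly bounded time. As $\lambda \to 0$, this projected trajectory traverses $X_f$ for a total time $O(\lambda T)$, which is smaller than the minimal period of any nonconstant periodic orbit of $X_f$; thus the projection must be constant, i.e.\ over a critical point $p$ of $f$. Over such $p$, $Y = 0$, so $R_{\bbeta^\pr} = h(p)^{-1}R_\bbeta$, and the $k$-th iterate $\ga_p^k$ of the fiber has period $kh(p)$; its action is $\int_{\ga_p^k}\bbeta^\pr = kh(p) = k(1+\lambda f(p))$, which is the claimed formula.

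For non-degeneracy and the Conley-Zehnder computation, I would fix $p \in \text{Crit}(f)$ and linearize the flow of $R_{\bbeta^\pr}$ along $\ga_p^k$ restricted to $\bxi$. Identifying $\bxi$ along the fiber with the horizontal distribution via the connection $\bbeta$, and using that $Y$ vanishes linearly in the base direction with derivative $\tfrac{\lambda}{h(p)^2}DX_f(p)$, the linearized return map on $\bxi$ is the time-$kh(p)$ flow of the linear Hamiltonian system generated by $\tfrac{\lambda}{h(p)^2}\Hess_p f$, composed with the identity coming from the underlying Boothby-Wang flow. Equivalently, $\ga_p^k$ is generated by the constant path of symmetric matrices obtained by adding a small multiple of $\Hess_p f$ to the zero path that generates the trivial symplectic path along $\bvr^k$. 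Non-degeneracy is then equivalent to non-degeneracy of $\Hess_p f$, which holds since $f$ is Morse.

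Finally, to identify the Conley-Zehnder index I would invoke the signature property together with the analytical description of $\czl$ from \eqref{eq:hwz=czl}. Since $\bvr^k$ has trivial linearized return map on $\bxi$, one has $\czl(\bvr^k) = \rs(\bvr^k) - n$. Perturbing the generating symmetric path by $+\epsilon\,\Hess_p f$ for $\epsilon$ small shifts the $\lambda_k$'s by an amount with signature determined by the Morse index, and a direct application of \eqref{eq:index_monotonicity1}--\eqref{eq:index_monotonicity2} together with the signature axiom yields the index jump $+\,\text{ind}(p)$ relative to $\czl(\bvr^k)$. Combining gives $\cz(\ga_p^k) = \czl(\bvr^k) + \text{ind}(p) = \rs(\bvr^k) - n + \text{ind}(p)$. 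The main technical point where I expect to spend care is the quantitative choice of $\lambda$ in step two: one needs a uniform lower bound on the period of nontrivial closed orbits of $X_f$ on $N$ together with uniform control of the rescaling factor $h^{-2}$, so that closed orbits of $R_{\bbeta^\pr}$ with action $< T$ are forced to be genuinely constant projections rather than merely short approximate ones.
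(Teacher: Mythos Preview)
The paper does not supply its own proof of this lemma; it is simply quoted from Bourgeois's lecture notes \cite[Lemmas~12 and~13]{Bo2}. Your proposal is a correct reconstruction of that standard argument---the explicit Reeb vector field computation, the action-bound localization of closed orbits over $\mathrm{Crit}(f)$, and the index identification via the signature axiom are exactly the ingredients in Bourgeois's proof---so there is nothing to compare beyond noting that you have filled in what the paper outsources.

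One small point to watch when you write it out: with the convention $\iota_{X_f}\omega=df$ (and the paper's convention $\dot\Gamma=J_0A\Gamma$), the linearization at $p$ gives $DX_f(p)=J_0(-\Hess_pf)$, so the generating symmetric matrix along $\gamma_p^k$ is a small \emph{negative} multiple of $\Hess_pf$, not $+\epsilon\Hess_pf$ as you wrote. This sign is what makes the signature contribution equal $\tfrac12\mathrm{Sign}(-\Hess_pf)=\mathrm{ind}(p)-n$, and hence the jump from $\czl(\bvr^k)=\rs(\bvr^k)-n$ equal to $+\,\mathrm{ind}(p)$, as your final formula correctly asserts. The invocation of \eqref{eq:index_monotonicity1}--\eqref{eq:index_monotonicity2} alone is not quite enough here since $\Hess_pf$ is indefinite; the clean route is exactly the signature axiom (or the crossing-form computation), which you already mention.
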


Now, we consider the two cases of Theorem A. 

\noindent {\bf Case (H1).} We can choose $\bar f$ even so that every periodic orbit $\gamma$ of $\bbeta^\pr$ with action less than $T$ has the property that the parities of the Conley-Zehnder indexes of the orbits are the same (observe that $\rs(\bvr^k)$ is always even). Consequently, the chain complex is lacunary. Thus, the filtered cylindrical contact homology is well defined because its differential vanishes identically. Moreover, we conclude that
\begin{equation}
\label{eq:HC-1}
HC^{\ba,T}_{*}(\bbeta^\pr) \cong \oplus_{k \in N_\ba^T} H_{*+ n - \rs(\bvr^k;\Psi)}(N),
\end{equation}
where $N_\ba^T=\{k \in N_\ba;\ k < T\}$ (recall that the period of the simple orbits of $\bbeta$ is fixed equal to one). 

\noindent {\bf Case (H2).} By Proposition \ref{prop:homotopy}, the hypothesis that $\om|_{\pi_2(N)}=0$ implies that $\ba^k \neq 0$ for every $k \in \N$ and $N_\ba=\{1\}$. Moreover, the hypothesis that $\pi_1(N)$ is torsion free implies that the free homotopy class $\ba$ is primitive, that is, every closed curve $\ga$ with free homotopy class $\ba$ is simple; see \cite[Lemmas 4.1 and 4.2]{GGM}. As a matter of fact, suppose that $\ga=\psi^k$ for some $k \in \N$ where $\psi$ is a simple closed curve with homotopy class $[\psi]$. We claim that $k=1$. In fact, the proof of Proposition \ref{prop:homotopy} shows that $[\ga]=[\bvr]^{\pm 1}$ which implies that $\pi_*[\ga]=0 \implies \pi_*[\psi]^k=0 \implies \pi_*[\psi]=0$ since $\pi_1(N)$ is torsion free. Thus, $[\psi]$ is in the image of the map $i_*: \pi_1(S^1) \to \pi_1(\bM)$ induced by the inclusion of the fiber. But $\ba^j\neq \ba$ for every $j\neq 1$, implying that $k=1$, as claimed.

Consequently, every periodic orbit of $\bbeta^\pr$ with free homotopy class $\ba$ is simple and $\bbeta^\pr$ has no contractible periodic orbit with action less than $T$. Thus, every holomorphic cylinder connecting periodic orbits of $\bbeta^\pr$ with homotopy class $\ba$ and action less than $T$ is somewhere injective (see Proposition \ref{prop:somewhere_injective})  and the moduli spaces used to define $HC_*^{\ba,T}(\bbeta^\pr)$ can be cut out transversally by choosing a generic almost complex structure on the symplectization of $\bM$; see Section \ref{sec:curves_symplectizations}. To compute $HC_*^{\ba,T}(\bbeta^\pr)$, we can employ the Morse-Bott techniques from \cite{Bo1} and it turns out that
\begin{equation}
\label{eq:HC-2}
HC^{\ba,T}_*(\bbeta^\pr) \cong H_{*+ n}(N).
\end{equation}

\subsubsection{A neck-stretching argument for $\balpha$}
\label{sec:neck-stretching}

In this section, we will use a neck-stretching argument to obtain periodic orbits for $\balpha$ using cobordisms between suitable multiples of $\bbeta$. This argument was nicely used by Hryniewicz-Momin-Salom\~ao \cite{HMS} in a similar context. It remounts to a sandwich trick often used in Floer homology \cite{BPS} but is more involved since the cylindrical contact homology of $\balpha$ does not need to be well defined. Moreover, we have to take care of possible holomorphic curves with negative index that can appear in the boundary of the SFT-compactification of the space of holomorphic curves with fixed asymptotes. 

More precisely, choose positive constants $c_- < c_+$ such that $c_-\bbeta < \balpha < c_+\bbeta$ and define $\bbeta_\pm = c_\pm\bbeta$ for notational convenience. Multiplying $\balpha$ by a constant if necessary, we can assume that $c_+=1$. In our argument, we do not need to assume that the cylindrical contact homology of $\balpha$ is well defined. However, in order to merely illustrate the idea of the sandwich trick, suppose that $HC^{\ba,T}_*(\balpha)$ is well defined and that all the relevant moduli spaces can be cut out transversally (we stress the fact that we are not assuming this in this work). Then one can show that there are maps $\psi^{\bbeta_+}_{\bbeta_-}$, $\psi^{\bbeta_+}_\balpha$ and $\psi^\balpha_{\bbeta_-}$ that fit in the commutative diagram
\begin{equation}
\label{eq:triangle}
\xymatrix{HC^{\ba,T}_*(\bbeta_+) \ar[rr]^{\psi^{\bbeta_+}_{\bbeta_-}}
\ar[dr]_{\psi^{\bbeta_+}_\balpha} && HC^{\ba,T}_*(\bbeta_-)\\
& HC^{\ba,T}_*(\balpha) \ar[ur]_{\psi^\balpha_{\bbeta_-}}&}
\end{equation}
such that the map $\psi^{\bbeta_+}_{\bbeta_-}$ is injective. The argument to prove this is standard. The only issue that is not completely standard is the fact that $\bbeta^\pr$ is allowed to have a contractible periodic orbit $\gamma$ with action less than $T$ such that the reduced Conley-Zehnder  $\rcz(\gamma)$ vanishes. (This can happen only if we relax the assumption on the constant of monotonicity $\lambda$ in the hypothesis (H1) of Theorem A; see Remark \ref{rmk:transversality2}.) However, this difficulty is overcome using the fact that, in this case, the symplectization of $\bbeta^\pr$ admits no rigid non-trivial holomorphic curve with one positive puncture; see \cite[Proposition 4.2]{AM} for details. Using the computation of $HC^{\ba,T}_*(\bbeta^\pr)$ one can easily conclude Theorem \ref{Thm 1} under the assumptions that $HC^{\ba,T}_*(\balpha)$ is well defined and that all the relevant moduli spaces can be cut out transversally. In fact, since $HC^{\ba,T}_{-n}(\bbeta_+) \neq 0$ and $HC^{\ba,T}_{n}(\bbeta_+) \neq 0$ we conclude that any non-degenerate perturbation $\balpha^\pr$ of $\balpha$ has periodic orbits $\gamma^\pr_\mi$ and $\gamma^\pr_\ma$ with action bounded by $T$, free homotopy class $\ba$ and such that $\cz(\ga^\pr_\mi) = -n$ and $\cz(\ga^\pr_\ma) = n$. Thus, these periodic orbits must come from bifurcations of periodic orbits $\gamma_\mi$ and $\gamma_\ma$ of $\balpha$ such that $\czl(\ga_\mi) \leq -n$ and $\czu(\ga_\ma) \geq n$.

The proof without these hypotheses is much more involved and is based on a neck-stretching argument and a careful analysis of the holomorphic curves that can appear in the boundary of the SFT-compactification of the relevant moduli spaces. In what follows, we refer to Section \ref{sec:CH} for the definitions. Let $\bW$ be the symplectization of $\bM$ and fix from now on a non-degenerate perturbation $\balpha^\pr$ of $\balpha$ and almost complex structures $J_\pm \in \Jreg(\bbeta^\pr_\pm)$, $J \in \Jreg(\balpha^\pr)$, $\bar J_1 \in \Jreg(J_-,J)$ and $\bar J_2 \in \Jreg(J,J_+)$ on $\bW$. Recall that these complex structures are regular only for \emph{somewhere injective} curves.

Take $T>1$ and choose a Morse function $f$ on $N$ with only one local minimum and only one local maximum and such that $f$ is even in case (H1). The existence of such function in case (H2) follows from an argument using cancellation of critical points and it is a classical result due to Morse \cite{Mor}: every closed manifold admits a polar function, that is, a Morse function with only one local minimum and one local maximum. Notice that in case (H1), an even Morse function is obviously perfect and therefore has only one local minimum and one local maximum.

Let $\bbeta^\pr$ be the non-degenerate perturbation of $\bbeta$ given by Lemma \ref{perturbation} and define $\bbeta^\pr_\pm = c_\pm\bbeta^\pr$. 
By our choice of $f$ we have that:

\noindent {\bf Case (H1).} There is only one orbit $\ga^\pm_\mi$  of $\bbeta_\pm^\pr$ that generates $HC_{-n}^{\ba,T}(\bbeta^\pr_\pm)$. In fact,
by \eqref{eq:bound_c_1(TN)} and \eqref{eq:HC-1}, we have that  $HC_{-n}^{\ba,T}(\bbeta^\pr_\pm) \simeq \Q$. Thus there is only one 
orbit $\ga^\pm_\mi$  of $\bbeta_\pm^\pr$ that generates $HC_{-n}^{\ba,T}(\bbeta^\pr_\pm)$ since the differential vanishes.

\noindent {\bf Case (H2).} Since $N_\ba^T=\{1\}$ and $f$ has only one local maximum, there is only one orbit $\ga^\pm_\ma$ that generates $HC_{n}^{\ba,T}(\bbeta^\pr_\pm)$.

Given a sequence of real numbers $R_n\to \infty$ consider the splitting almost complex structures $J_{R_n} = \bar J_1 \circ_{R_n} \bar J_2$ defined in \eqref{eq:splitcs}. As explained in Section \ref{sec:splittingacs}, taking a subsequence if necessary, we have that $J_{R_n}$ is biholomorphic to an almost complex structure $J^\pr_{R_n} \in \J(J_-,J_+)$ for every $n$. 

\begin{proposition}\label{cylinder}
\ 

\noindent {\bf Case (H1).} There exists a $J_{R_n}$-holomorphic cylinder $u_n: \R \times \R/\Z \to \bW$ positively asymptotic to $\ga^+_\mi$ and negatively asymptotic to $\ga^-_\mi$ for every $n$. 

\noindent {\bf Case (H2).} There exists a $J_{R_n}$-holomorphic cylinder $v_n: \R \times \R/\Z \to \bW$ positively asymptotic to $\ga^+_\ma$ and negatively asymptotic to $\ga^-_\ma$ for every $n$.
\end{proposition}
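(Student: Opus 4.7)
The plan is to use the biholomorphism $\Phi_{R_n}$ from \eqref{eq:jsplit} to transfer the problem to the cobordism almost complex structure $J^\pr_{R_n}\in\J(J_-,J_+)$, produce a rigid $J^\pr_{R_n}$-holomorphic cylinder with the desired asymptotics by a chain-map argument, and then pull back via $\Phi_{R_n}^{-1}$ to obtain $u_n$ (resp.\ $v_n$). Concretely, SFT compactness applied to $(\bW,J^\pr_{R_n})$, regarded as a cylindrical-ends cobordism from $\bbeta^\pr_-$ to $\bbeta^\pr_+$, yields a chain map
\begin{equation*}
\Psi_{R_n}\colon CC^{\ba,T}_*(\bbeta^\pr_+)\longrightarrow CC^{\ba,T}_*(\bbeta^\pr_-)
\end{equation*}
by counting rigid index-zero $J^\pr_{R_n}$-holomorphic cylinders; the Stokes-type bound $A_{\bbeta^\pr_-}(\ga^\pr)\leq A_{\bbeta^\pr_+}(\ga)$ for a cobordism cylinder from $\ga$ to $\ga^\pr$ keeps the outputs inside the action window.

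To make $\Psi_{R_n}$ rigorous, I would verify that the relevant moduli spaces contain only somewhere injective curves. In case (H1), estimate \eqref{eq:bound_c_1(TN)} gives $\rs(\bvr^k;\Psi)\geq 2$ for $k>1$, so by \eqref{eq:HC-1} the unique generator of $HC^{\ba,T}_{-n}(\bbeta^\pr_\pm)$ lies in the $k=1$ summand, making $\ga^\pm_\mi$ a simple orbit of $\bbeta^\pr_\pm$; in case (H2), $N^T_\ba=\{1\}$ forces every orbit of $\bbeta^\pr_\pm$ in class $\ba$ to be simple. Proposition \ref{prop:somewhere_injective} then guarantees that every $J^\pr_{R_n}$-holomorphic cylinder with one asymptote at $\ga^\pm_\mi$ (respectively $\ga^\pm_\ma$) is somewhere injective, and since $J^\pr_{R_n}\in\Jreg(J_-,J_+)$ the relevant moduli spaces are cut out transversally. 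Potential rigid holomorphic breakings through contractible orbits of vanishing reduced Conley--Zehnder index that could arise in case (H1) are ruled out by the argument from \cite[Proposition 4.2]{AM}.

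Because $\bbeta^\pr_-$ and $\bbeta^\pr_+$ differ only by the positive scalar $c_-$, the cobordism is essentially trivial, and a standard homotopy-of-cobordisms argument identifies $\Psi_{R_n}$ on homology with the tautological rescaling isomorphism; in particular $\Psi_{R_n}$ is an isomorphism in extremal degrees. In case (H1) the chain complex is lacunary, so $HC^{\ba,T}_{-n}(\bbeta^\pr_\pm)=CC^{\ba,T}_{-n}(\bbeta^\pr_\pm)=\Q\cdot\ga^\pm_\mi$, and the isomorphism forces $\Psi_{R_n}(\ga^+_\mi)=c\,\ga^-_\mi$ with $c\neq 0$; this coefficient equals the algebraic count of rigid $J^\pr_{R_n}$-holomorphic cylinders from $\ga^+_\mi$ to $\ga^-_\mi$, so at least one such cylinder exists and pulls back via $\Phi_{R_n}^{-1}$ to the desired $u_n$. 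The same argument in top degree $n$, using the generator $\ga^\pm_\ma$ of $HC^{\ba,T}_n(\bbeta^\pr_\pm)\cong H_{\dim N}(N)\cong\Q$ from \eqref{eq:HC-2}, produces $v_n$ in case (H2).

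The main obstacle is the rigorous construction of $\Psi_{R_n}$ at the chain level in case (H1): with the relaxed hypothesis on $\rs(\vr)$, the contact form $\bbeta^\pr$ can admit contractible orbits of vanishing reduced Conley--Zehnder index, which could in principle contribute rigid broken configurations to the SFT limit. Controlling these requires the specific no-rigid-curves property of perturbations of prequantization contact forms from \cite[Proposition 4.2]{AM}; once this input is granted, the remainder of the argument follows the standard SFT template.
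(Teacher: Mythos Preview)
Your argument has a genuine gap: you assert that $J^\pr_{R_n}\in\Jreg(J_-,J_+)$, but this is nowhere established and is in general false. The almost complex structure $J^\pr_{R_n}$ is the push-forward under $\Phi_{R_n}$ of the specific splitting structure $J_{R_n}=\bar J_1\circ_{R_n}\bar J_2$; it lies in $\J(J_-,J_+)$ by construction (see~\eqref{eq:jsplit}), but there is no reason it should lie in the residual subset $\Jreg(J_-,J_+)$. Without regularity you cannot conclude that the index-zero moduli spaces are cut out transversally, so the count defining your chain map $\Psi_{R_n}$ is not known to be finite, let alone to satisfy $\Psi_{R_n}\partial_+=\partial_-\Psi_{R_n}$ in case~(H2). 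Perturbing $J^\pr_{R_n}$ to a regular structure does not help either, since the cylinder you then obtain is for the wrong almost complex structure and need not survive the limit back to $J_{R_n}$.

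The paper circumvents this difficulty in a rather different way. It first proves (Lemma~\ref{lemma1}) that for any $J\in\Jreg(J_-,J_+)$ the moduli space $\M(\ga^+_\mi,\ga^-_\mi;J)$ is a finite set, so one obtains a well-defined $\Z_2$-valued count $N_\mi(J)$. Lemma~\ref{lemma2} exhibits an explicit regular $J$ (built from a rescaling diffeomorphism) for which $N_\mi(J)=1$, and Lemma~\ref{lemma3} shows $N_\mi$ is constant on $\Jreg(J_-,J_+)$ via a parametrized moduli space argument. The proposition then follows by contradiction: if no $J_{R_n}$-cylinder existed, the moduli space $\M(\ga^+_\mi,\ga^-_\mi;J^\pr_{R_n})$ would be empty, hence vacuously transverse, so $J^\pr_{R_n}\in\Jreg(\ga^+_\mi,\ga^-_\mi;J_-,J_+)$ and $N_\mi(J^\pr_{R_n})=0$, contradicting Lemmas~\ref{lemma2} and~\ref{lemma3}. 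This contradiction argument is precisely what lets one avoid ever needing regularity of $J^\pr_{R_n}$ directly. Your ``standard homotopy-of-cobordisms'' sentence is in spirit Lemma~\ref{lemma3}, but the SFT compactness analysis there (ruling out breakings through orbits of the wrong index, handling the index-$(-1)$ curves at irregular parameters in case~(H2), and using that $\ga^+_\ma$ is a cycle) is exactly the non-standard content that makes the argument rigorous.
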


\begin{remark}
\label{rmk:transversality2}
If we assume that all the relevant moduli spaces can be cut out transversally then the proof of the previous proposition is easier and does not need the hypothesis on the constant of monotonicity $\lambda$ in case (H1) of Theorem A. In fact, without this hypothesis we may have possibly several periodic orbits $\ga^{\pm,i}_\mi$ ($i=1,\dots,l_\pm$) of $\bbeta_\pm^\pr$ that generate $HC_{-n}^{\ba,T}(\bbeta_\pm)$. Given a periodic orbit $\ga^{+,i}_\mi$ there is, for every $n$, a $J_{R_n}$-holomorphic cylinder $u_n: \R \times \R/\Z \to \bW$ such that $u_n$ is positively asymptotic to $\ga^{+,i}_\mi$ and negatively asymptotic to $\ga^{-,j}_\mi$ for some $j \in \{1,\dots,l_-\}$. Indeed, if $u_n$ does not exist we would conclude that the injective maps $\psi^{\bbeta_+}_{\bbeta_-}$ from the commutative triangle \eqref{eq:triangle} satisfy $\psi^{\bbeta_+}_{\bbeta_-}([\ga^{+,i}_\mi]) = 0$, a contradiction. The argument for the existence of $v_n$ is analogous.
\end{remark}

Proposition \ref{cylinder} is a consequence of the next three lemmas.

\begin{lemma} \label{lemma1}
\ 

\noindent {\bf Case (H1).} Given $J \in \Jreg(J_-,J_+)$, the moduli space $\M(\ga^+_\mi,\ga^-_\mi;J)$ is finite.

\noindent {\bf Case (H2).} Given $J \in \Jreg(J_-,J_+)$, the moduli space $\M(\ga^+_\ma,\ga^-_\ma;J)$ is finite. 
\end{lemma}

\begin{proof}
Since $\ga^+_\mi$ (resp. $\ga^+_\ma$) is simple, every holomorphic cylinder in $\widehat\M(\ga^+_\mi,\ga^-_\mi;J)$ (resp. $\widehat\M(\ga^+_\ma,\ga^-_\ma;J)$) is somewhere injective and consequently $\M(\ga^+_\mi,\ga^-_\mi;J)$ (resp. $\M(\ga^+_\ma,\ga^-_\ma;J)$) is a smooth manifold of dimension zero. Consider a sequence $u_n$ in $\M(\ga^+_\mi,\ga^-_\mi;J)$ (resp. $\M(\ga^+_\ma,\ga^-_\ma;J)$). We will show that $u_n$ has a subsequence converging to an element $u_\infty$ in $\M(\ga^+_\mi,\ga^-_\mi;J)$ (resp. $\M(\ga^+_\ma,\ga^-_\ma;J)$) and therefore $\M(\ga^+_\mi,\ga^-_\mi;J)$ (resp. $\M(\ga^+_\ma,\ga^-_\ma;J)$) is a finite set.

Since the curves $u_n$ have fixed asymptotes, there exists $C>0$ such that
\[
E(u_n) < C
\]
for every $n$. Therefore, we can employ the SFT-compactness results from \cite{BEHWZ} to study the limit of $u_n$ in the compactification of the space of holomorphic curves. It follows from this that there are integer numbers $1\leq l \leq m$ and a subsequence $u_n$ converging in a suitable sense to a holomorphic building given by
\begin{itemize}
\item (possibly several) $J_+$-holomorphic curves $u^1,\dots,u^{l-1}$;
\item one $J$-holomorphic curve $u^l$;
\item (possibly several) $J_-$-holomorphic curves $u^{l+1},\dots,u^m$.
\end{itemize}
Moreover, these holomorphic curves satisfy the following properties:
\begin{itemize}
\item[(a)] Each connected component of $u^i$ has only one positive puncture. Moreover, $u^i$ has at least one connected component not given by a trivial vertical cylinder over a periodic orbit.
\item[(b)] $u^1$ is connected and positively asymptotic to $\ga^+_\mi$ (resp. $\ga^+_\ma$).
\item[(c)] $u^m$ has only one negative puncture and it is asymptotic to $\ga^-_\mi$ (resp. $\ga^-_\ma$).
\item[(d)] The boundary data of the curve $u^i$ at the negative end match the boundary data of $u^{i+1}$ on the positive end.
\item[(e)] The sum of the indexes of $u^i$ equals the index of $u_n$ and therefore vanishes.
\end{itemize}
Now we split the argument in cases (H1) and (H2). 

\noindent {\bf Case (H1).} Since $\ga^+_\mi$ has minimal action (that is, every periodic orbit of $\bbeta^\pr_+$ has action bigger than or equal to the action of $\ga^+_\mi$) the $J_+$-holomorphic curves $u^1,\dots,u^{l-1}$ can be only vertical cylinders over $\ga^+_\mi$. But,  by property (a), at least one connected component  of each level of the building is not given by a trivial cylinder and therefore $l=1$; see \cite{BEHWZ}. Thus, $u^l=u^1$ is positively asymptotic to $\ga^+_\mi$ and consequently is somewhere injective. Let $\ga^-_1,\dots,\ga^-_l$ be the negative asymptotes of $u^l$. We have that
\[
\text{index}(u^l) = \rcz(\ga^+_\mi) - \sum_{i=1}^l \rcz(\ga^-_i) \geq 0,
\]
where $\rcz$ is the reduced Conley-Zehnder index. It follows from the properties above that only one of the periodic orbits $\ga^-_1,\dots,\ga^-_l$ has free homotopy class $\ba$ and all the remaining orbits are contractible. But in case (H1) every contractible periodic orbit $\ga$ of $\bbeta^\pr_-$ with action less than $T$ has positive reduced Conley-Zehnder index with respect to the trivialization given by a capping disk. Indeed, these orbits must come from bifurcations of contractible periodic orbits of $\bbeta$ given by $\bvr^{km}$ for some $k \in \N$. But this implies that
\[
\rcz(\ga) = \cz(\ga) + n - 2 \geq \rs(\bvr^{km}) - n + n - 2 \geq 2,
\]
where the last inequality follows from our hypothesis that $\lambda>1 \implies \rs(\bvr^m)=\rs(\vr)>2 \implies \rs(\bvr^m)\geq4$, where the index is computed using a capping disk (note that $\rs(\bvr^{km})=k\rs(\bvr^m)$ because $R_\bbeta$ defines a circle action). Moreover, every periodic orbit of $\bbeta^\pr_-$ with free homotopy class $\ba$ and action less than $T$ has Conley-Zehnder index bigger than or equal to $\cz(\ga^+_\mi)$. We conclude then that $u^l$ has to be a cylinder negatively asymptotic to $\ga^-_\mi$ because $\ga^-_\mi$ is the only periodic orbit of $\bbeta^\pr_-$ with index equal to $\cz(\ga^+_\mi)$ and action less than $T$. Since $\ga^-_\mi$ has minimal action we conclude that the $J_-$-holomorphic curves $u^{l+1},\dots,u^m$ can be only vertical cylinders over $\ga^-_\mi$. Therefore, the only non-trivial component of the holomorphic building is the cylinder $u_\infty:=u^l=u^1$.

\noindent {\bf Case (H2).} Since every periodic orbit of $\bbeta_\pm^\pr$ with free homotopy class $\ba$ is simple and $\bbeta_\pm^\pr$ do not have contractible periodic orbits with action less than $T$, then every holomorphic curve in this building must be a somewhere injective cylinder. Consequently, every non-trivial cylinder $u^1,\dots,u^{l-1},u^{l+1},\dots,u^m$ has positive index. But
\[
\sum_{i=1}^m \text{index}(u^i)=0.
\]
Consequently we get only one non-trivial holomorphic cylinder $u_\infty:=u^l = u^1 \in \M(\ga^+_\ma,\ga^-_\ma;J)$, as desired.
\end{proof}

Based on the previous lemma, we can define the maps $N_\mi: \Jreg(\ga^+_\mi,\ga^-_\mi;J_-,J_+) \to \Z_2$ and $N_\ma: \Jreg(\ga^+_\ma,\ga^-_\ma;J_-,J_+) \to \Z_2$ given by
\[
N_\mi(J) = \#\M(\ga^+_\mi,\ga^-_\mi;J) \text{(mod 2)}\text{ and }N_\ma(J) = \#\M(\ga^+_\ma,\ga^-_\ma;J) \text{(mod 2)},
\]
in cases (H1) and (H2) respectively.

\begin{lemma}\label{lemma2}
\ 

\noindent {\bf Case (H1).} There exist $J_\pm \in \Jreg(\bbeta^\pr_\pm)$ and $J \in \Jreg(J_-,J_+)$ such that $N_\mi(J)=1$. 

\noindent {\bf Case (H2).} There exist $J_\pm \in \Jreg(\bbeta^\pr_\pm)$ and $J \in \Jreg(J_-,J_+)$ such that $N_\ma(J)=1$.
\end{lemma}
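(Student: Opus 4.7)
The crucial observation is that $\bbeta^\pr_+$ and $\bbeta^\pr_-$ differ only by a positive multiplicative constant $c_+/c_-$, so the exact symplectic cobordism $\overline{W}(\bbeta^\pr_-,\bbeta^\pr_+)$ sits inside the symplectization of $\bbeta^\pr$ as a piece of a product cobordism. Furthermore, the orbits $\ga^+_\mi$ and $\ga^-_\mi$ (respectively $\ga^+_\ma$ and $\ga^-_\ma$) both correspond to the same underlying simple closed Reeb orbit of $\bbeta^\pr$, namely the fiber over the minimum (respectively maximum) of $f$. The plan is to exploit this triviality of the cobordism to show that the chain-level map $\psi^{\bbeta^\pr_+}_{\bbeta^\pr_-}$ induced by counting rigid $J$-holomorphic cylinders is a quasi-isomorphism of filtered chain complexes, which then forces the mod-$2$ count of cylinders from $\ga^+_\mi$ to $\ga^-_\mi$ (respectively $\ga^+_\ma$ to $\ga^-_\ma$) to equal $1$.

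Concretely, I would first pick a regular cylindrical almost complex structure $J_0 \in \Jreg(\bbeta^\pr)$ on $\bW$. Because the notion of cylindrical almost complex structure is insensitive to rescaling the contact form, $J_0$ lies in $\Jreg(\bbeta^\pr_\pm)$, so we may take $J_\pm := J_0$. On the cobordism, the restriction of $J_0$ is cylindrical and typically non-regular (the trivial cylinder over $\ga_\mi$ sweeps out a $1$-parameter family by $\R$-translation), so we perturb it in the interior of $\overline{W}(\bbeta^\pr_-,\bbeta^\pr_+)$ to an almost complex structure $J \in \Jreg(J_-,J_+)$ agreeing with $J_0$ on the cylindrical ends. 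With these choices the moduli space $\M(\ga^+_\mi,\ga^-_\mi;J)$ is finite by Lemma~\ref{lemma1}, the (chain-level) cobordism map is defined by the usual count of rigid cylinders, and the coefficient of $\ga^-_\mi$ in $\psi^{\bbeta^\pr_+}_{\bbeta^\pr_-}(\ga^+_\mi)$ is $N_\mi(J)$ modulo $2$.

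Next, I would argue that $\psi^{\bbeta^\pr_+}_{\bbeta^\pr_-}$ is a quasi-isomorphism on filtered cylindrical contact homology. The argument is the standard sandwich trick: for any $\lambda > c_+/c_-$, the analogous cobordism $\overline{W}(\bbeta^\pr_+,\lambda\bbeta^\pr_+)$ defines a map in the opposite direction, and the composition of the two cobordisms is homotopic within the symplectization of $\bbeta^\pr$ to a trivial rescaling cobordism whose induced map on the filtered complex is (up to the canonical identification of orbits of $\bbeta^\pr_\pm$ with orbits of $\bbeta^\pr$) the natural inclusion on filtered chain complexes for increasing action windows. Since in our setting the moduli spaces entering the continuation homotopies are transverse—because in case (H1) the chain complex is lacunary (and by the index argument of Section \ref{sec:computationCH}, supplemented by the vanishing statement of \cite[Proposition 4.2]{AM}, rigid curves with one positive puncture are either trivial cylinders or somewhere injective), while in case (H2) every orbit with homotopy class $\ba$ is simple and there are no contractible orbits under action $T$, so every relevant curve is somewhere injective—the standard chain homotopy argument goes through rigorously and yields the claimed quasi-isomorphism.

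Finally, by the computations of Section \ref{sec:computationCH} we have $HC^{\ba,T}_{-n}(\bbeta^\pr_\pm) \cong \Q$ generated by $[\ga^\pm_\mi]$ in case (H1), and $HC^{\ba,T}_{n}(\bbeta^\pr_\pm) \cong \Q$ generated by $[\ga^\pm_\ma]$ in case (H2). Consequently the quasi-isomorphism $\psi^{\bbeta^\pr_+}_{\bbeta^\pr_-}$ must send $[\ga^+_\mi]$ (resp.\ $[\ga^+_\ma]$) to a nonzero multiple of $[\ga^-_\mi]$ (resp.\ $[\ga^-_\ma]$), which forces $N_\mi(J) = 1$ (resp.\ $N_\ma(J) = 1$) modulo $2$. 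The main obstacle is the third step: verifying the homotopy between the composition of the two trivial cobordisms and the rescaling identity map at the chain level, in the filtered setting, while staying within the class of moduli spaces that are transversely cut out. This is routine given the lacunary/simple structure of our chain complexes, but requires care because cylindrical contact homology is not yet invariant in full generality.
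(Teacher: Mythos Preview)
Your approach differs substantially from the paper's and contains a circularity at the crucial step. The paper, following \cite{HMS}, gives a direct geometric construction: pick any $J_+\in\Jreg(\bbeta^\pr_+)$, pull it back by the rescaling diffeomorphism $F(r,x)=(r/c_-,x)$ to obtain $J_-:=F^*J_+\in\Jreg(\bbeta^\pr_-)$, and then build an interpolating $J\in\J(J_-,J_+)$ by modifying only the $\partial_r$--$R_{\bbeta^\pr_+}$ plane via a monotone function $f$. The point is that this particular $J$ is \emph{globally biholomorphic}, via an explicit diffeomorphism $G(r,x)=(g(r),x)$ solving $g'(r)=1/f(g(r))$, to the cylindrical $J_+$. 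Hence $J$ is automatically regular (no perturbation needed), and $G$ identifies $\M(\ga^+_\mi,\ga^-_\mi;J)$ bijectively with $\M(\ga^+_\mi,\ga^+_\mi;J_+)$, which consists of the single trivial vertical cylinder. This yields $N_\mi(J)=1$ with no homological input and no one-parameter families.

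Your plan instead tries to deduce $N_\mi(J)=1$ from the statement that the cobordism chain map $\psi^{\bbeta^\pr_+}_{\bbeta^\pr_-}$ is a quasi-isomorphism, which you propose to prove by a sandwich. The gap is in your third paragraph: you assert that the composite cobordism from $\lambda\bbeta^\pr_+$ down to $\bbeta^\pr_-$ ``is homotopic \dots\ to a trivial rescaling cobordism whose induced map on the filtered complex is \dots\ the natural inclusion.'' But that composite cobordism is itself a rescaling cobordism between two constant multiples of $\bbeta^\pr$, so proving that its induced chain map is the inclusion is precisely the same assertion as the lemma, just with different constants $(c_-,\lambda c_+)$ in place of $(c_-,c_+)$. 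You have restated the problem rather than reduced it. The paper's explicit biholomorphism $G$ is exactly the device that breaks this circle: it exhibits, for one specific regular $J$ on the rescaling cobordism, a one-point moduli space, after which Lemma~\ref{lemma3} propagates this to all regular $J$.
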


\begin{proof}
We will follow an argument from \cite{HMS}. Let $J_+ \in \Jreg(\bbeta^\pr_+)$ (we know that this set is not empty). Identify the symplectization $\bW$ of $\bM$ with $\R \times \bM$ through the diffeomorphism  $\Psi_{\bbeta_+^\pr}$ defined in \eqref{eq:symp}. Consider the diffeomorphism $F: \bW \to \bW$ given by $F(r,x)=(r/c_-,x)$ and define $J_- := F^*J_+ \in \J(\bbeta^\pr_-)$ (recall that $c_+=1$). Since every $J_-$-holomorphic curve is of the form $F^{-1}\circ u$ for some $J_+$-holomorphic curve $u$, we infer that $J_- \in  \Jreg(\bbeta^\pr_-)$.

Let $f: \R \to \R$ be a smooth non-increasing function such that $f \equiv 1/c_-$ near $(-\infty,\ln (c_-)]$ and $f\equiv 1$ near $[0,\infty)$. Consider the almost complex structure $J$ defined by
\[
J(\partial_r) = fR_{\bbeta^\pr_+},\ J(R_{\bbeta^\pr_+}) = -\frac{1}{f}\partial_r \text{ and } J|_\xi=J_+|_\xi=J_-|_\xi.
\]
By construction, $J \in \J(J_-,J_+)$. We claim that there exists a diffeomorphism $G: \bW \to \bW$ such that $G^*J=J_+$. Indeed, consider the function $g: \R \to \R$ given by the unique solution of the initial value problem
\[
g^\pr(r)=1/f(g(r)),\ g(0)=0
\]
and define $G(r,x)=(g(r),x)$. We have that
\begin{align*}
G^*J|_{(r,x)}\partial_r & = dG^{-1}|_{(g(r),x)}J|_{(g(r),x)}g^\pr(r)\partial_r \\
& = dG^{-1}|_{(g(r),x)}f(g(r))g^\pr(r)R_{\bbeta^\pr_+} \\
& = dG^{-1}|_{(g(r),x)}R_{\bbeta^\pr_+} = R_{\bbeta^\pr_+},
\end{align*}
proving the claim. Therefore, since $J_+ \in \Jreg(\bbeta^\pr_+)$ we conclude that $J \in \Jreg(J_-,J_+)$. It is easy to see that the biholomorphism $G$ induces a bijective correspondence between the moduli spaces $\M(\ga^+_\mi,\ga^-_\mi;J)$ and $\M(\ga^+_\mi,\ga^+_\mi;J_+)$. But $\M(\ga^+_\mi,\ga^+_\mi;J_+)$ contains only the vertical cylinder over $\ga^+_\mi$. In particular, $N_\mi(J)=1$. The argument for $N_\ma$ is analogous and left to the reader.
\end{proof}

\begin{lemma}
\label{lemma3}
$N_\mi$ and $N_\ma$ are constant functions.
\end{lemma}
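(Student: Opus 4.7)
The plan is a standard cobordism-modulo-$2$ argument. Given $J^0, J^1 \in \Jreg(\ga^+_\mi,\ga^-_\mi;J_-,J_+)$, I would join them by a generic smooth path $\{J^t\}_{t\in[0,1]}$ in $\J(J_-,J_+)$ and study the parametric moduli space
\[
\mathcal{W} \;=\; \bigsqcup_{t\in[0,1]} \M(\ga^+_\mi,\ga^-_\mi;J^t)\times\{t\}.
\]
Since $\ga^+_\mi$ is simple, every cylinder appearing in $\mathcal{W}$ is somewhere injective by Proposition \ref{prop:somewhere_injective}, and a parametric version of Dragnev's theorem provides transversality, so $\mathcal{W}$ is a smooth $1$-manifold whose boundary at $t=0,1$ equals $\M(\ga^+_\mi,\ga^-_\mi;J^0) \sqcup \M(\ga^+_\mi,\ga^-_\mi;J^1)$. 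If $\mathcal{W}$ is compact then $|\partial \mathcal{W}|$ is even, forcing $N_\mi(J^0)=N_\mi(J^1)$. The entire argument therefore reduces to showing that no broken holomorphic building appears in the SFT compactification of $\mathcal{W}$ at an interior parameter $t_*\in(0,1)$.

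The simple codimension-one breakings are excluded by a pure Fredholm-dimension computation. A top breaking at $t_*$ produces a non-trivial $u \in \M(\ga^+_\mi,\gamma';J_+)/\R$ and $v \in \M(\gamma',\ga^-_\mi;J^{t_*})$, whose respective dimensions $\rcz(\ga^+_\mi)-\rcz(\gamma')-1$ and $\rcz(\gamma')-\rcz(\ga^-_\mi)$ must both be non-negative; summing gives $\rcz(\ga^+_\mi)-\rcz(\ga^-_\mi)\ge 1$, which contradicts $\rcz(\ga^+_\mi)=\rcz(\ga^-_\mi)=-2$. The bottom breaking and every breaking involving more than one symplectization level are ruled out the same way: each extra symplectization piece contributes a $-1$ from the $\R$-quotient, while the indices must still sum to zero. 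The analogous check with $\rcz(\ga^\pm_\ma)=2n-2$ disposes of $N_\ma$ in case (H2).

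The main obstacle is ruling out SFT limits whose symplectization levels contain non-cylindrical components, for instance a $J_+$-pair of pants with positive puncture at $\ga^+_\mi$ and two negative punctures, possibly at contractible orbits in case (H1). Here I would invoke the action minimality of $\ga^+_\mi$: by Lemma \ref{perturbation} we have $A_{\bbeta^\pr_+}(\ga^+_\mi) = c_+(1+\lambda f(p_{\min}))\approx 1$, which is the smallest action among class-$\ba$ orbits of $\bbeta^\pr_\pm$ with action less than $T$, while in case (H1) every contractible orbit of $\bbeta^\pr_\pm$ with action less than $T$ has action at least $c_\pm m\ge 2$. The action inequality $\sum_i A(\gamma_i)\le A(\ga^+_\mi)$ at the negative punctures of such a symplectization component then allows only a single puncture, which by the free-homotopy and action constraints must coincide with $\ga^+_\mi$; the component is therefore a trivial cylinder and does not constitute a genuine breaking. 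In case (H2) there are no contractible orbits of action less than $T$ and every class-$\ba$ orbit is simple, so the analogous argument for $\ga^+_\ma$ is immediate. Combining these observations shows $\mathcal{W}$ is compact, and the parity count completes the proof.
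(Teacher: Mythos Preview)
Your overall cobordism strategy is the same as the paper's, but two steps do not go through as written.

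First, in your Fredholm count for cylindrical breakings you assert that the cobordism piece $v\in\M(\gamma',\ga^-_\mi;J^{t_*})$ has non-negative index. This is not justified: $J^{t_*}$ lies only in a generic one-parameter family, so a somewhere injective $J^{t_*}$-curve may have index $-1$. The resulting inequality $\rcz(\ga^+_\mi)-\rcz(\ga^-_\mi)\ge 0$ gives no contradiction, and exactly such a configuration (a $J^{t_*}$-cylinder of index $-1$ glued to a $J_-$-cylinder of index $1$) is what must be excluded. In case (H1) the paper closes this gap with a parity argument: by Lemma~\ref{perturbation} and the evenness of $f$, every periodic orbit of $\bbeta^\pr_\pm$ with action below $T$ has \emph{even} reduced Conley--Zehnder index, so no index $-1$ cobordism cylinder can exist. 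Your action bound on contractible orbits does not substitute for this and in any case fails when $m=1$.

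Second, and more seriously, your treatment of case (H2) is incorrect. The orbit $\ga^+_\ma$ is the fibre over the \emph{maximum} of $f$ and therefore has \emph{maximal} (not minimal) action among class-$\ba$ orbits of $\bbeta^\pr_+$; the ``analogous argument'' you invoke does not apply, and top $J_+$-breakings cannot be excluded. In fact the paper shows that broken configurations consisting of a $J_+$-cylinder of index $1$ from $\ga^+_\ma$ to some $\gamma'$ followed by a $J_{s_j}$-cylinder of index $-1$ from $\gamma'$ to $\ga^-_\ma$ \emph{do} appear in $\partial\M_\ma$. (The reverse configuration, with the index $-1$ piece on top, is ruled out because every class-$\ba$ orbit of $\bbeta^\pr_-$ has $\cz\le\cz(\ga^+_\ma)$.) The essential point, which is missing from your sketch, is that these broken configurations are counted in pairs: since $HC^{\ba,T}_n(\bbeta^\pr_+)\cong\Q$ is generated by $\ga^+_\ma$ and every class-$\ba$ orbit is simple, the mod-$2$ count of $J_+$-cylinders from $\ga^+_\ma$ to any fixed index-$(n-1)$ orbit vanishes, and hence so does the total count of broken boundary components. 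Only after this homological cancellation does the parity identity $N_\ma(J_0)=N_\ma(J_1)$ follow.
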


\begin{proof}
Let $J_0$ and $J_1$ in $\Jreg(\ga^+_\mi,\ga^-_\mi;J_-,J_+)$ (resp. $\Jreg(\ga^+_\ma,\ga^-_\ma;J_-,J_+)$). It is well known that we can find a family of almost complex structures $J_t \in \J(\ga^+_\mi,\ga^-_\mi;J_-,J_+)$ (resp. $J_t \in \J(\ga^+_\ma,\ga^-_\ma;J_-,J_+)$)  joining $J_0$ and $J_1$ such that $J_t$ is regular for every $0\leq t \leq 1$ except at finitely many values $0<s_1<\dots<s_k<1$ (recall that regularity here means that the linearized Cauchy-Riemann operator is surjective only for somewhere injective curves). For these values, the somewhere injective $J_{s_j}$-holomorphic curves have index bigger than or equal to $-1$.

\noindent {\bf Case (H1).} Define the set
\[
\M_\mi := \{(t,u)\mid t \in [0,1]\text{ and }u \in \M(\ga^+_\mi,\ga^-_\mi;J_t)\}.
\]
This is a smooth 1-dimensional manifold admitting a suitable SFT-compactification. (Notice that every $u \in  \M(\ga^+_\mi,\ga^-_\mi;J_t)$ is somewhere injective.) Given a sequence $(t_i,u_i) \in \M_\mi$ such that $t_i \to s_j$ for some $1\leq j \leq k$, it follows from SFT-compactness (notice here that $E(u_i)$ is uniformly bounded because the asymptotes are fixed) that $u_i$ converges to a holomorphic building given by
\begin{itemize}
\item (possibly several) $J_+$-holomorphic curves $u^1,\dots,u^{l-1}$;
\item one $J_{s_j}$-holomorphic curve $u^l$;
\item (possibly several) $J_-$-holomorphic curves $u^{l+1},\dots,u^m$.
\end{itemize}
Since $\ga^+_\mi$ has minimal action, the $J_+$-holomorphic curves are only trivial vertical cylinders. Thus, $u^l=u^1$ is positively asymptotic to $\ga^+_\mi$ and consequently is somewhere injective. Let $\ga^-_1,\dots,\ga^-_l$ be the negative asymptotes of $u^l$. We have that $u^l$ cannot have index $-1$ because
\[
\text{index}(u^l) = \rcz(\ga^+_\mi) - \sum_{i=1}^l \rcz(\ga^-_i),
\]
and the reduced Conley-Zehnder index of every periodic orbit of $\bbeta^\pr_-$ with action less than $T$ is even. Consequently, arguing exactly as in the proof of Lemma \ref{lemma1}, we conclude that $u^l$ is a cylinder negatively asymptotic to $\ga^-_\mi$ (because $\text{index}(u^l)\geq 0$ and every contractible periodic orbit of $\bbeta^\pr_-$ with action less than $T$ has positive reduced Conley-Zehnder index). But $\ga^-_\mi$ has minimal action, and therefore the $J_-$-holomorphic curves are only trivial vertical cylinders. Consequently, $\M_\mi$ is a compact 1-dimensional manifold such that
\[
\partial \M_\mi =  \M(\ga^+_\mi,\ga^-_\mi;J_0) \cup \M(\ga^+_\mi,\ga^-_\mi;J_1).
\]
It follows from this that $N_\mi(J_0)=N_\mi(J_1)$.

\noindent {\bf Case (H2).} Define the set
\[
\M_\ma := \{(t,u)\mid t \in [0,1]\text{ and }u \in \M(\ga^+_\ma,\ga^-_\ma;J_t)\}
\]
As before, consider a sequence $(t_i,u_i) \in \M_\ma$ such that $t_i \to s_j$ for some $1\leq j \leq k$. From SFT-compactness, $u_i$ converges to a holomorphic building given by
\begin{itemize}
\item (possibly several) $J_+$-holomorphic curves $u^1,\dots,u^{l-1}$;
\item one $J_{s_j}$-holomorphic curve $u^l$;
\item (possibly several) $J_-$-holomorphic curves $u^{l+1},\dots,u^m$.
\end{itemize}
Since every periodic orbit of $\bbeta^\pr_\pm$ with free homotopy class $\ba$ and action less than $T$ is simple and there is no contractible periodic orbit with action less than $T$, every holomorphic curve in this building is a somewhere injective cylinder. Hence, every $J_\pm$-holomorphic curve $u^1,\dots,u^{l-1},\allowbreak u^{l+1},\dots,u^m$ has positive index and the index of $u^l$ is bigger than or equal to $-1$. We conclude then that if this building has more than one non-trivial cylinder then we have either
\begin{itemize}
\item  exactly one $J_+$-holomorphic cylinder (positively asymptotic to $\ga^+_\ma$) with index $1$ and one $J_{s_j}$-holomorphic cylinder (negatively asymptotic to $\ga^-_\ma$) with index $-1$;
\item  or exactly one $J_{s_j}$-holomorphic cylinder (positively asymptotic to $\ga^+_\ma$) with index $-1$ and one $J_-$-holomorphic cylinder (negatively asymptotic to $\ga^-_\ma$) with index $1$.
\end{itemize}
The last case is impossible because every periodic orbit of $\bbeta^\pr_-$ with free homotopy class $\ba$ and action less than $T$ has Conley-Zehnder index less than or equal to $\cz(\ga^+_\ma)$. Thus, by a gluing argument we conclude that
\[
\partial \M_\ma = \M(\ga^+_\ma,\ga^-_\ma;J_0) \cup \M(\ga^+_\ma,\ga^-_\ma;J_1) \cup \big( \bigcup_{j=1}^k \M_{\text{broken}}(\ga^+_\ma,\ga^-_\ma;J_+,J_{s_j}) \big)
\]
where $\M_{\text{broken}}(\ga^+_\ma,\ga^-_\ma;J_+,J_{s_j})$ is the moduli space of the broken cylinders given by one $J_+$-holomorphic cylinder with index $1$ positively asymptotic to $\ga^+_\ma$ and one $J_{s_j}$-holomorphic cylinder with index $-1$ negatively asymptotic to $\ga^-_\ma$. But by \eqref{eq:HC-2} we have that
\[
HC^{\ba,T}_n(\bbeta^\pr_+) \simeq \Q.
\]
Since $\ga^+_\ma$ is the unique periodic orbit $\bbeta^\pr_+$ with free homotopy class $\ba$, action less than $T$ and Conley-Zehnder index equal to $n$, it is necessarily a generator of $HC^{\ba,T}_n(\bbeta^\pr_+)$ and consequently the cardinality of the moduli space of $J_+$-holomorphic cylinders positively asymptotic to $\ga^+_\ma$ and negatively asymptotic to a periodic orbit with index $\cz(\ga^+_\ma)-1$ must be even. (Observe here that every periodic orbit of $\bbeta^\pr_+$ with free homotopy class $\ba$ and action less than $T$ is simple.) Thus, the cardinality of $\M_{\text{broken}}(\ga^+_\ma,\ga^-_\ma;J_{s_j},J_+)$ is even and we conclude that
\[
\#\M(\ga^+_\ma,\ga^-_\ma;J_0) = \#\M(\ga^+_\ma,\ga^-_\ma;J_1)\ \text{(mod 2)},
\]
as desired.
\end{proof}

\begin{proof}[Proof of Proposition \ref{cylinder}]
If $u_n$ does not exist, we would conclude that $J^\pr_{R_n} \in \Jreg(\ga^+_\mi,\ga^-_\mi;\allowbreak J_-,J_+)$ and $N_\mi(J^\pr_{R_n})=0$, contradicting Lemmas \ref{lemma2} and \ref{lemma3}. The same argument implies the existence of $v_n$.
\end{proof}

We will now finish the proof of Theorem \ref{Thm 1}. 

\noindent {\bf Case (H1).} Take a sequence $R_n \to \infty$ and a sequence of $J_{R_n}$-holomorphic cylinders $u_n: \R \times \R/\Z \to \bW$ positively asymptotic to $\ga^+_\mi$ and negatively asymptotic to $\ga^-_\mi$. The existence of such cylinders is ensured by Proposition \ref{cylinder}. Since the asymptotes are fixed, the energy of $u_n$ is uniformly bounded. Therefore, by SFT-compactness there are integer numbers $1\leq l < l^\pr \leq m$ and a subsequence $u_n$ converging in a suitable sense to a holomorphic building given by
\begin{itemize}
\item (possibly several) $J_+$-holomorphic curves $u^1,\dots,u^{l-1}$;
\item one $\bar J_2$-holomorphic curve $u^l$;
\item (possibly several) $J$-holomorphic curves $u^{l+1},\dots,u^{l^\pr-1}$;
\item one $\bar J_1$-holomorphic curve $u^{l^\pr}$;
\item (possibly several) $J_-$-holomorphic curves $u^{l^\pr+1},\dots,u^m$.
\end{itemize}
Moreover, these holomorphic curves satisfy the properties (a)-(e) mentioned in the proof of Lemma \ref{lemma1}. Since $\ga^+_\mi$ has minimal action, every $J_+$-holomorphic curve $u^1,\dots,u^{l-1}$ must contain only trivial vertical cylinders. Consequently, $u^l=u^1$ is positively asymptotic to $\ga^+_\mi$ which implies that it is somewhere injective. Let $\ga^-_1,\dots,\ga^-_k$ be the negative asymptotes of $u^l$. Reordering these orbits if necessary, we have that $\ga^\pr_\mi := \ga^-_1$ has free homotopy class $\ba$ and $\ga^-_2,\dots,\ga^-_k$ are contractible. Thus, $\balpha^\pr$ carries a periodic orbit $\ga^\pr_\mi$ with free homotopy class $\ba$; see Figure \ref{fig:SFT compactness}.

\begin{figure}[h]
\includegraphics[width=5.2in, height=3in]{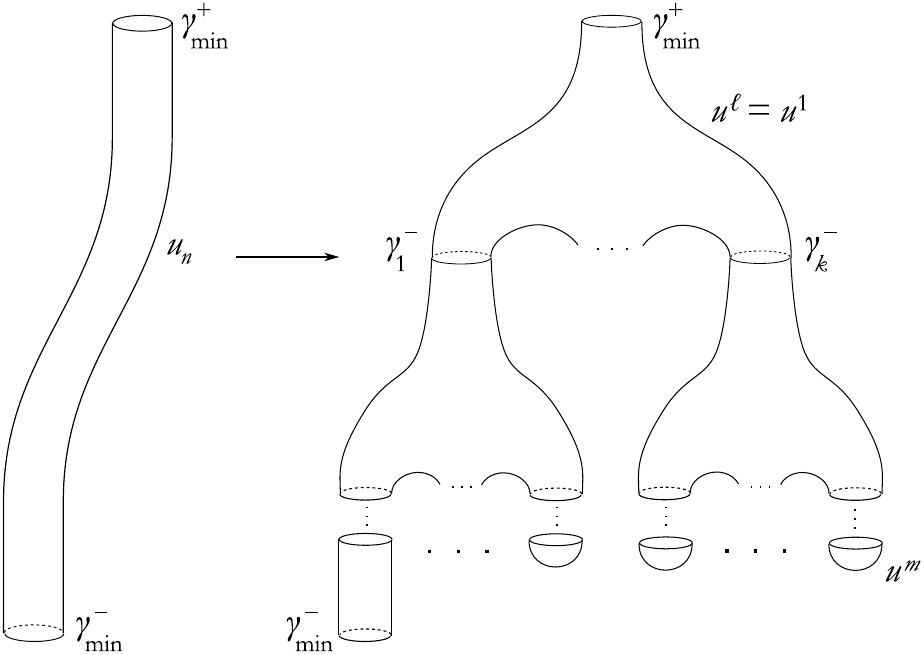}
\centering
\caption{SFT compactness applied to the sequence $u_n$.}
\label{fig:SFT compactness}
\end{figure}

Now, suppose that every contractible periodic orbit $\tilde\ga$ of $\alpha$ satisfies $\czl(\tilde\ga)\geq \rs(\vr)-n$. Then $\rcz(\ga^-_i)\geq 0$ for every $i\geq 2$, where the index is computed using a trivialization $\Upsilon$ given by a capping disk. Indeed, by our assumption $\cz(\ga^-_i,\Upsilon) \geq \rs(\vr,\Upsilon) - n$. Thus we arrive at
\[
\rcz(\ga^-_i) = \cz(\ga^-_i,\Upsilon) + n-2 \geq \rs(\vr,\Upsilon) - 2 \geq 2,
\]
where the last inequality follows from the assumption that $\lambda>1 \implies \rs(\vr,\Upsilon)>2$ which implies that $\rs(\vr,\Upsilon)\geq 4$ because $R_\beta$ defines a circle action and therefore $\rs(\vr,\Upsilon)$ is an even integer. Hence,
\[
\cz(\ga^+_\mi) - \cz(\ga^\pr_\mi) \geq \text{index}(u^l) \geq 0.
\]
This implies that $\cz(\ga^\pr_\mi) \leq -n$.

\begin{remark}
\label{rmk:transversality3}
Notice that to achieve the relation $\rcz(\ga_i)\geq 0$ we only need the weaker condition that $\lambda>0$. Thus, this is the only hypothesis we need in case (H1) of Theorem A if we assume that the relevant moduli space of holomorphic curves can be cut out transversally.
\end{remark}

\noindent {\bf Case (H2).} Consider a sequence of $J_{R_n}$-holomorphic cylinders $v_n: \R \times \R/\Z \to \bW$ positively asymptotic to $\ga^+_\ma$ and negatively asymptotic to $\ga^-_\ma$. There is a subsequence converging to a holomorphic building as before. Since every periodic orbit of $\bbeta^\pr_-$ with free homotopy class $\ba$ has action less than or equal to the action of $\ga^-_\ma$, the $J_-$-holomorphic curves $v^{l^\pr+1},\dots,v^m$ are trivial vertical cylinders over $\ga^-_\ma$. Thus, the first (non-trivial) level of this building $v^{l^\pr}$ must contain a $J_1$-holomorphic cylinder $v$ negatively asymptotic to $\ga^-_\ma$. Since $\ga^-_\ma$ is simple, this cylinder is somewhere injective. Let $\ga^\pr_\ma$ be the positive asymptote of $v$. We have that
\[
\text{index}(v) = \cz(\ga^\pr_\ma) - \cz(\ga^-_\ma) \geq 0
\]
so that $\cz(\ga^\pr_\ma) \geq n$.

Finally, as explained before, $\balpha^\pr$ is arbitrarily closed to $\balpha$ and the periodic orbits $\ga^\pr_\mi$ and $\ga^\pr_\ma$ have uniformly bounded action. Therefore, these periodic orbits must come from bifurcations of periodic orbits $\gamma_\mi$ and $\gamma_\ma$ of $\balpha$ such that $\czl(\ga_\mi) \leq -n$ and $\czu(\ga_\ma) \geq n$.

\subsection{Proof of Theorem \ref{Thm 2}}

Following \cite{Lon02,Lon99}, one can associate to $\ga$ Bott's index function $\Ga: S^1 \to \Z$ which satisfies the following properties:
\begin{itemize}
\item[(a)] $\czl(\ga^k;\Psi^k) = \sum_{z \in S^1;\ z^k=1} \Ga(z)$ for every $k \in \N$ (Bott's formula).
\item[(b)] The discontinuity points of $\Ga$ are contained in $\sigma(P_\ga) \cap S^1$, where $P_\ga$ is the linearized Poincar\'e map of $\ga$ and $\sigma(P_\ga)$ is its spectrum.
\item[(c)] $\Ga(z)=\Ga(\bar z)$ for every $z \in S^1$.
\item[(d)] The \emph{splitting numbers} $S^\pm(z) := \lim_{\ep\to 0^+} \Ga(e^{\pm i\ep}z)-\Ga(z)$ satisfy
\begin{itemize}
\item[(d1)] $0 \leq S^\pm(z) \leq \nu(z)$ for every $z \in \sigma(P_\ga) \cap S^1$, where $\nu(z)$ is the geometric multiplicity of $z$;
\item[(d2)] $S^+(z) + S^-(z) \leq m(z)$ for every $z \in \sigma(P_\ga) \cap S^1$, where $m(z)$ is the algebraic multiplicity of $z$;
\item[(d3)] $S^\pm(z)=S^\mp(\bar z)$ for every $z \in S^1$.
\end{itemize}
\end{itemize}
For a definition of $\Ga$ and a proof of these properties we refer to \cite{Lon02,Lon99}. Now, suppose that $\czl(\ga;\Psi) \leq -n$ and $\czl(\ga^j;\Psi^j) \geq -n$ for some $j>1$. By Bott's formula,
\begin{align*}
\czl(\ga^j;\Psi^j) & = \sum_{z^j=1} \Ga(z) = \Ga(1) + \sum_{z^j=1; z \neq 1} \Ga(z) \\
& = \czl(\ga;\Psi) + \sum_{z^j=1; z \neq 1} \Ga(z) \geq -n.
\end{align*}
But $\czl(\ga;\Psi) \leq -n$ and consequently $\sum_{z^j=1; z \neq 1} \Ga(z) \geq 0$. Together with property (c), this implies that there exists $z=e^{i\theta} \in S^1$,
with $0 < \theta \leq \pi$ such that
\begin{equation}
\label{index jump}
\Ga(z)-\Ga(1) \geq n.
\end{equation}

Let $\{e^{i\theta_1},\dots,e^{i\theta_l}\}$ be the eigenvalues of $P_\ga$ with modulus one and argument $0 < \theta_k < \theta$, 
such that $\theta_k < \theta_{k+1}$ for every $1\leq k \leq l-1$. The properties of $\Ga$ imply that
\[
\Gamma (z) - \Gamma(1) = (S^+ (1) - S^- (1)) + \sum_{k=1}^l (S^+ (e^{i\theta_k}) - S^- (e^{i\theta_k}))\,.
\]
Since the splitting numbers are non-negative (by property (d1)), it follows from \eqref{index jump} that
\[
S^+ (1) + \sum_{k=1}^l S^{+}(e^{i\theta_k}) \geq n,
\]
By property (d3), this implies that
\[
S^- (1) + \sum_{k=1}^l S^{-}(e^{-i\theta_k}) \geq n.
\]
Putting these two inequalities together we arrive at
\begin{align*}
2n \geq \sum_{z\in S^1} m(z) & \geq \sum_{z \in S^1} S^+(z)+S^-(z) \\
& \geq S^+ (1) + S^- (1) + \sum_{k=1}^l (S^{+}(e^{i\theta_k}) + S^{-}(e^{-i\theta_k})) \geq 2n,
\end{align*}
where the first inequality is trivial by a dimensional reason, the second inequality holds by property (d2) and the third inequality 
follows again from the fact that the splitting numbers are non-negative. This ensures that all of the above inequalities are in fact
equalities. Hence, $\ga$ is elliptic. Moreover, $\sum_{z^j=1; z \neq 1} \Ga(z)$ must vanish and therefore 
$\czl(\ga;\Psi)=\czl(\ga^j;\Psi^j)=-n$.

\begin{remark}
\label{rmk:strictly elliptic - proof}
The previous argument actually shows that $\ga$ is strictly elliptic. More precisely, let $J_0$ be the multiplication by $i$ in $\R^{2n} \simeq \C^n$ and $\om_0$ the canonical symplectic form on $\R^{2n}$. Consider their extensions $J$ and $\om$ to $\C^{2n} = \R^{2n} \otimes \C$ by complex linearity. The Krein form is defined as $\beta(v,w)=-i\om(v,\bar w)$. It turns out that $\beta$ is a non-degenerate Hermitian symmetric form; see \cite{Eke,Lon02}. Given $z \in \sigma(P_\ga) \cap S^1$ denote by $E_z \subset \C^{2n}$ the generalized eigenspace of $z$. The Krein type numbers $p_z$ and $q_z$ of $z$ are defined as the number of positive and negative eigenvalues of $\beta|_{E_z}$ respectively. We say that $\ga$ is strictly elliptic if $\beta|_{E_z}$ is definite for every $z \in \sigma(P_\ga) \cap (S^1\setminus\{1\})$. It is well known (c.f. \cite[Theorem 9.1.7]{Lon02}) that $p_z \geq S^+(z)$ and $q_z \geq S^-(z)$ for every $z \in \sigma(P_\ga) \cap S^1$. Thus, the previous analysis shows that
\[
2n \geq \sum_{z \in S^1} p_z + q_z \geq \sum_{z \in S^1} S^+(z)+S^-(z) \geq S^+ (1) + S^-(1) + \sum_{k=1}^l (S^{+}(e^{i\theta_k}) + S^{-}(e^{-i\theta_k})) \geq  2n.
\]
Since all these numbers are non-negative, we conclude that $p_z = S^+(z)$ and $q_z = S^-(z)$ for every $z \in \sigma(P_\ga) \cap S^1$. Moreover, $S^+(z)=0$ for every $z \notin \{1,e^{i\theta_1},\dots,e^{i\theta_l}\}$ and $S^-(z)=0$ for every $z \notin \{1,e^{-i\theta_1},\dots,e^{-i\theta_l}\}$. Therefore, we conclude that $\ga$ must be strictly elliptic.
\end{remark}

In the case that $\czu(\ga;\Psi) \geq n$ and $\czu(\ga^j;\Psi^j) \leq n$, consider the inverted Reeb flow of $\alpha$,  $-\ga(t):=\ga(-t)$ and the trivialization of $(\xi,d\alpha)$ along $-\ga$ given by $\Psi_{-t}$. By \eqref{eq:czl x czu},
\[
\czl(-\ga;\Psi) = -\czu(\ga;\Psi).
\]
Then the previous argument applies {\it mutatis mutandis} and shows that $-\ga$ is (strictly) elliptic, which is equivalent to saying that $\ga$ is (strictly) elliptic, 
and that $\czl(-\ga;\Psi)=\czl(-\ga^j;\Psi^j) = -n$, which is equivalent to saying that $\czu(\ga;\Psi)=\czu(\ga^j;\Psi^j)=n$.

\section{Proofs of the applications} 
\label{sec:proof applications}

Before we present the proofs of the applications, let us discuss some useful facts about trivializations of the contact structure along closed geodesics in Zoll manifolds that will be important in the next subsections.

\subsection{Trivializations of closed geodesics in Zoll manifolds}
\label{sec:trivializations}

Let $N^{n+1}$ be a closed Zoll manifold, that is, a closed Riemannian manifold all of whose geodesics are closed with the same minimal period. (Equivalently, the geodesic flow generates a free circle action in the unit sphere bundle $SN$.) Let $\pi: TN \to N$ and $\tau: SN \to SN/S^1$ be the corresponding projections. Let $\bar g$ be the metric on $N$ all of whose geodesics are closed and $g$ any metric on $N$. Given a closed geodesic $\ga$ of $g$ freely homotopic to a fiber of $S^1 \to SN \to SN/S^1$ (given by a simple closed geodesic of $\bar g$) we have two natural trivializations of the contact structure $\xi$ along $\ga$. The first one is the trivialization $\Psi$ induced by a trivialization on a fiber $\bar\ga$ over $p:=\tau(\bar\gamma) \in SN/S^1$ given by a constant symplectic frame of $T_p(SN/S^1)$. It is easy to see that $\czl(\bar\ga;\Psi)=-n$.

To define the second trivialization, suppose initially that $N$ is orientable. Consider the trivialization $\Phi$ that sends the intersection of the vertical distribution of $TTN$ with $\xi$ to a fixed Lagrangian subspace of $\R^{2n}$ (note that, since $N$ is orientable, the normal bundle of $x:=\pi\circ\ga$ is trivial). This trivialization is unique up to homotopy, see \cite[Lemma 1.2]{AS} (more precisely, the argument in \cite{AS} is for a trivialization of $\ga^*TT^*N$ but it can be readily adapted to a trivialization of the contact structure $\xi$). It turns out that $\czl(\ga;\Phi)$ coincides with the Morse index of $\ga$, see \cite{CF, Lon02, Web}. Since $\czl(\bar\ga;\Psi)=-n$, this implies that the difference of the indexes with respect to the trivializations $\Phi$ and $\Psi$ is given by
\begin{equation}
\label{eq:trivializations1}
\czl(\ga;\Phi) - \czl(\ga;\Psi) = \morse(\bar\ga) + n.
\end{equation}
If $N$ is not orientable, the normal bundle of $x$ does not need to be trivial and, following \cite{Web}, we have to suitably modify the construction of $\Phi$ in order to produce a trivialization $\Phi^\pr$ of $\xi$ over $\ga$ so that
\[
\czl(\ga;\Phi^\pr) = \morse(\ga) + 1,
\]
see \cite{Web} for details. (The relation above was proved in \cite{Web} for non-degenerate closed geodesics but it can be extended to degenerate closed geodesics, see \cite{CF}.) Thus,
\begin{equation}
\label{eq:trivializations2}
\czl(\ga;\Phi^\pr) - \czl(\ga;\Psi) = \morse(\bar\ga) + n + 1.
\end{equation}

\subsection{Proof of Theorem \ref{thm:elliptic geodesic S^2}}
\label{sec:proof elliptic geodesic S^2}

Recall that $F$ is a Finsler metric on $S^2$, with reversibility $r$ and flag curvature $K$ satisfying $(r/(r+1))^2 \leq K \leq 1$. As explained in Section \ref{sec:results}, if we have $(r/(r+1))^2 < K \leq 1$, then the lift of the geodesic flow of $F$ to $S^3$ is dynamically convex and $\Z_2$-invariant. Thus, we conclude from Theorems \ref{Thm 1} and \ref{Thm 2} that $F$ carries an elliptic closed geodesic $\ga$ such that $\ga$ is not contractible in $SS^2$ and $\czl(\ga;\Psi)=-1$, where the index is computed using the trivialization $\Psi$ discussed in Section \ref{sec:trivializations}. Since a prime closed geodesic of the round metric on $S^2$ has Morse index equal to one, we conclude from \eqref{eq:trivializations1} that $\morse(\ga)=\czl(\ga;\Phi)=1$.

Thus, it remains only to deal with the case where the inequality $(r/(r+1))^2 \leq K$ is not strict. In order to do it, we will need the following general result. Let $F$ be a Finsler metric on a closed manifold $N$ and denote by $N_0$ the zero section in $TN$. Given a point $(x,v) \in TN\setminus N_0$ and a plane $\sigma \subset T_xM$ denote by $K(\sigma,v)$ the corresponding flag curvature. Let $SN := F^{-1}(1)$ be the unit sphere bundle and consider the continuous functions $K_-: SN \to \R$ and $K_+: SN \to \R$ defined as $K_-(x,v) = \min_{\sigma \subset T_xN} K(\sigma,v)$ and $K_+(x,v) = \max_{\sigma \subset T_xN} K(\sigma,v)$, where $\sigma$ runs over all the planes in $T_xN$. Define $K_\mi = \min_{(x,v) \in SN} K_-(x,v)$ and  $K_\ma = \max_{(x,v) \in SN} K_+(x,v)$.

\begin{proposition}
\label{prop:perturbation}
Suppose that $K_\mi < K_\ma$ and $K_-(x,v)/K_+(x,v) > K_\mi/K_\ma$ for every $(x,v) \in SN$. Then $F$ can be $C^\infty$-perturbed to a Finsler metric $F^\pr$ whose flag curvature $K^\pr$ satisfies $K_\mi < K^\pr(\sigma,v) \leq K_\ma$ for every $(x,v) \in SN$ and $\sigma \subset T_xN$. In particular, such perturbation is always possible if $N$ has dimension two.
\end{proposition}

\begin{proof}
Let $f$ be a smooth function $C^0$-arbitrarily close to $K_-$ such that $\min f = K_\mi$ and define $S=\{(x,v) \in SN;\ K_-(x,v)=K_\mi\}$. Fix $\ep>0$ such that $K_\mi+\ep$ is a regular value of $f$ and $U_\ep:=\{(x,v) \in SN;\ f(x,v) < K_\mi+\ep\}$ is a neighborhood of $S$ satisfying $K_-(x,v) > K_\mi+2\delta$ for every $(x,v) \notin U$ and some $\delta>0$. Taking $\ep$ sufficiently small, we can assume, by our hypothesis, that $K_+(x,v) < K_\ma-\delta'$ for every $(x,v) \in U$ and some $\delta'>0$. Take $c<1$ such that $K_\mi < (1/c^2)K_-(x,v) \leq (1/c^2)K_+(x,v) < K_\ma$ for every $(x,v) \in U$. Let $\ep'<\ep$ such that $f$ has no critical values in $[K_\mi+\ep',K_\mi+\ep]$ and $K_-(x,v) > K_\mi + \delta$ for every $(x,v) \notin U_{\ep'}$. Consider a bump function $\rho: \R \to \R$ such that $\rho(r)=c$ for every $r\leq K_\mi+\ep'$ and $\rho(r)=1$ for every $r\geq K_\mi+\ep$. Notice that $c$ can be taken arbitrarily close to $1$ independently of the choice of $\ep'$. Consequently, $\rho$ can be chosen $C^\infty$-close to the constant function equal to one. Now it is clear from the choices of the constants that $F^\pr:=\rho(f)F$ is the desired metric.
\end{proof}

By this proposition, if our metric $F$ on $S^2$ does not satisfy the strict inequality $(r/(r+1))^2 < K \leq 1$ then we can perturb this to a metric $F^\pr$ whose flag curvature $K^\pr$ satisfies $(r/(r+1))^2 < K^\pr \leq 1$. Now, it is clear from the proof of Theorem \ref{Thm 1} that the period of the elliptic closed geodesic $\ga^\pr$ of $F^\pr$ is uniformly bounded from above. Passing to the limit we get an elliptic closed geodesic $\ga$ for $F$ such that $\czl(\ga;\Phi) \leq -1$.

Finally, we claim that if $(r/(r+1))^2 < K \leq 1$ then $\ga$ must be prime. As a matter of fact, arguing indirectly, suppose that $\ga=\psi^k$ for some prime closed geodesic $\psi$ and $k\geq 2$. Since $\ga$ is not contractible, we have that $k\geq 3$. Note that $\morse(\psi) \leq \morse(\ga) = 1$ (given a closed geodesic $c$ we have the relation $\morse(c^{k})\geq \morse(c)$ for every $k \in \N$). By \eqref{eq:mean index} we conclude that $k\Delta(\psi;\Phi)=\Delta(\ga;\Phi)\leq 2 \implies \Delta(\psi^2;\Phi^2) \leq 4/3 \implies \morse(\psi^2)<3$. But $\psi^2$ is contractible and therefore admits a lift to $S^3$ with Conley-Zehnder index less than $3$, contradicting the dynamical convexity. (Note here that a trivialization given by a capping disk coincides with $\Phi^2$ up to homotopy.)

\subsection{Proof of Theorem \ref{thm:elliptic geodesic RP^n}}

First of all, note that both $SS^{n+1}$ and $S\RP^{n+1}$ are Boothby-Wang contact manifolds with the circle action given by the geodesic flow of the Riemannian metric with constant curvature. Moreover, the induced $\Z_2$-action on $SS^{n+1}$ is generated by the lift of the antipodal map $\psi: S^{n+1} \to S^{n+1}$ to $SS^{n+1}$ given by $(x,v) \mapsto (\psi(x),d\psi(x)v)$. Consequently, $SS^{n+1}/\Z_2$ coincides with $S\RP^{n+1}$. It is easy to see that $SS^{n+1}/S^1=S\RP^{n+1}/S^1$ is diffeomorphic to the Grassmannian of oriented two-planes $G^+_2(\R^{n+2})$ (each oriented great circle is identified with the oriented two-plane in $\R^{n+2}$ that contains it; see \cite{Bes}).

The Morse index of the simple closed geodesic of the metric with constant curvature in $\RP^{n+1}$ vanishes. Thus, by the discussion in Section \ref{sec:trivializations}, it is enough to show that if $(r/(r+1))^2 < K \leq 1$ then $F$ has an elliptic closed geodesic $\ga$ such that $\czl(\ga;\Psi)=-n$. Indeed, by our assumption and Proposition \ref{prop:perturbation}, we can argue as in the proof of Theorem \ref{thm:elliptic geodesic S^2} and conclude that if $(r/(r+1))^2 \leq K \leq 1$ then $F$ has an elliptic closed geodesic $\ga$ such that $\czl(\ga;\Psi)\leq -n$. Clearly, $\ga$ can be chosen prime since $\morse(\ga)=0$ and $\morse(c^{k})\geq \morse(c)$ for every closed geodesic $c$ and $k \in \N$.

To prove the existence of $\ga$ when $(r/(r+1))^2 < K \leq 1$, suppose firstly that $n>1$ and consider the lift $\widetilde F$ of $F$ to $S^{n+1}$. By \cite[Theorem 1]{Rad04} the pinching condition $(r/(r+1))^2 < K \leq 1$ implies that the length of every closed geodesic $c$ of $\widetilde F$ satisfies $L(c) \geq \pi(1+1/r)$.  Using this and \cite[Lemma 3]{Rad04} we conclude that the Morse index of $c$ is bigger than or equal to $n$. By the discussion in Section \ref{sec:trivializations}, this means that $\czl(c;\Phi) \geq n$. On the other hand, it is easy to see that the (prime) geodesics of the round metric on $S^{n+1}$ have Robbin-Salamon index equal to $2n$ using the trivialization $\Phi$. Therefore, the pinching condition implies that the geodesic flow of $\widetilde F$ is dynamically convex (note here that, since $n>1$, $SS^{n+1}$ is simply connected). When $n=1$, consider the lift of the geodesic flow of $F$ to a $\Z_4$-invariant Reeb flow of a contact form $\alpha$ on $S^3$. By the discussion in the proof of Theorem \ref{thm:elliptic geodesic S^2}, $\alpha$ is dynamically convex.

Thus, in order to apply Theorems \ref{Thm 1} and \ref{Thm 2}  to obtain the desired closed geodesic $\ga$ of $F$, it remains only to show that $G^+_2(\R^{n+2})$ satisfies the hypothesis (H1) in Theorem A, namely, that $G^+_2(\R^{n+2})$ admits an even Morse function and is monotone with constant of monotonicity $\lambda>1$. This Grassmannian is diffeomorphic to the quadric
\[
Q_n = \{[z_0:\cdots:z_{n+1}] \in \CP^{n+1} \mid \sum_{j=0}^{n+1} z_j^2 = 0\}.
\]
The identification goes as follows: an oriented two-plane in $\R^{n+2}$ can be described by an orthonormal basis $(x,y)$ which can be mapped into $\CP^{n+1}$ via the map $(x,y) \mapsto [x+iy]$ defined on the Stiefel manifold $V_2(\R^{n+2})$, where $[\cdot]$ denotes the classes in $\CP^{n+1}$ of non-zero vectors in $\C^{n+2}$. The equations $\|x\|^2=\|y\|^2$ and $x\cdot y=0$ read as $\sum_{j=0}^{n+1} (x_j + iy_j)^2=0$. One can check that this map descends to the quotient and defines a diffeomorphism between $G^+_2(\R^{n+2})$ and $Q_n$. Moreover, the pullback $\om$ of the canonical symplectic form on $\CP^{n+1}$ to $Q_n$ is a representative of the Euler class of the circle bundle $S^1 \to SS^{n+1} \to Q_n$. Thus, $SS^{n+1}$ is the prequantization of $Q_n$ with the symplectic form $\om$.

Let us show that $Q_n$ is monotone. Consider first the cases $n=1$ and $n=2$. We have that $Q_1 \simeq S^2$ and $Q_2 \simeq S^2 \times S^2$ which one can check that are monotone with constant of monotonicity $\lambda=2$. Thus, suppose that $n\geq 3$. A computation shows that $H^2(Q_n;\Z) \simeq H_2(Q_n;\Z) \simeq \pi_2(Q_n) \simeq \Z$. Moreover, $[\om]$ is a generator of $H^2(Q_n;\Z)$. The first Chern class is given by
\[
c_1(TQ_n) = n[\om],
\]
see \cite[Theorem 1.5]{Li}, \cite{Aud} and \cite[pages 429-430]{MS}. Consequently, $Q_n$ is monotone with constant of monotonicity bigger than one.

To prove that $G^+_2(\R^{n+2})$ admits an even Morse function, we proceed by induction on $n$. Suppose that $G^+_2(\R^{k+2})$ admits an even Morse function for every $k\leq n$. It is clearly true for $n=2$ since $G^+_2(\R^{3}) \simeq S^2$ and $G^+_2(\R^{4}) \simeq S^2 \times S^2$. So suppose that $n\geq 3$. It is well known that $G^+_2(\R^{n+2})$ admits a Hamiltonian circle action whose momentum map $f$ is a perfect Morse-Bott function such that its critical set has three connected components given by
\[
X = \PP(z,0,\dots,0) = \text{pt},\ \ Y = G^+_2(0\times \R^{n}),\ \ Z = \PP(z,0,\dots,0) = \text{pt},
\]
where $X$ and $Z$ correspond to the two orientations on the real two-plane $(z,0,\dots,0)$, see \cite[Example 1.2]{Li}. The points $X$ and $Z$ correspond to the minimum and maximum of $f$ respectively. Moreover, since $H_1(G^+_2(\R^{n+2});\Z) = 0$, $H_2(G^+_2(\R^{n+2});\Z) \neq 0$ and $f$ is perfect, we have that the index of $Y$ (given by the index of the Hessian of $f$ restricted to the normal fibers of $Y$) is equal to two.

By our induction hypothesis, $Y$ admits an even Morse function $g$. Fix a Riemannian metric on $G^+_2(\R^{n+2})$ and consider a tubular neighborhood $V=\exp(N_\ep Y)$ of $Y$, where $N_\ep Y$ is the normal bundle of $Y$ with radius $\ep$. Let $\pi: N_\ep Y \to Y$ be the bundle projection and $\beta: [0,\ep] \to \R$ be a bump function such that $\beta(r)=1$ if $0 \leq r \leq \ep/3$, $\beta(r)=0$ if $2\ep/3 \leq r \leq \ep$ and $\beta^\pr(r)\leq 0$ for every $r$. Consider the function $h: V \to \R$ given by
\[
h(x) = \beta(r(\exp^{-1}(x)))g(\pi(\exp^{-1}(x))),
\]
where $r(\exp^{-1}(x))$ is the radius of $\exp^{-1}(x)$. By construction, $h$ extends to a smooth function defined on $G^+_2(\R^{n+2})$ which vanishes identically outside $V$. Define $\tilde f: G^+_2(\R^{n+2}) \to \R$ as
\[
\tilde f(x) = f(x) + \delta h(x),
\]
where $\delta>0$ is a constant. One can check that choosing $\delta$ sufficiently small then $\tilde f$ is Morse and its critical points are given by $X$, $Z$ and the critical points of $g$ on $Y$. Moreover, the indexes of these last points equal the indexes of the critical points of $g$ plus the index of $Y$, which is equal to two. Consequently, $\tilde f$ is an even Morse function, as desired.

\subsection{Proof of Theorem \ref{thm:elliptic geodesic S^n}}

By Theorem \ref{thm:elliptic geodesic S^2}, we can suppose that $n>1$. The Morse index of the simple closed geodesic of the round metric on $S^{n+1}$ is $n$. Therefore, by the discussion in Section \ref{sec:trivializations}, we have to show that if $\frac{9}{4}(r/(r+1))^2 < K \leq 1$ then $F$ has an elliptic closed geodesic $\ga$ such that $\czl(\ga;\Psi) = -n$. The case that the pinching condition is not strict can be dealt with using a perturbation argument as in the previous sections.

By \cite[Theorem 1 and Lemma 3]{Rad04}, the condition $\frac{9}{4}(r/(r+1))^2 < K \leq 1$ implies that every closed geodesic $c$ of $F$ satisfies $\czl(c;\Phi) \geq n$ and $\czl(c^2;\Phi) \geq 3n$, where $\Phi$ is the trivialization discussed in Section \ref{sec:trivializations}. The Robbin-Salamon index of the (prime) geodesics of the round metric on $S^{n+1}$ is equal to $2n$ with respect to this trivialization. Moreover, as explained in the previous section, $SS^{n+1}$ satisfies the hypothesis (H1) of Theorem A. Thus, we can apply Theorems \ref{Thm 1} and \ref{Thm 2} and conclude that $F$ carries an elliptic closed geodesic $\ga$ satisfying $\czl(\ga;\Psi) = -n$. Moreover, $\ga$ can be chosen prime: if $\ga=\psi^k$ for some prime closed geodesic $\psi$ then $\psi$ is elliptic and $\morse(\psi)=n$. Indeed, since $\ga=\psi^k$ we have that $\morse(\psi) \leq \morse(\ga)=n$. On the other hand, $\morse(\psi)\geq n$ by dynamical convexity.

\subsection{Proof of Theorem \ref{thm:magnetic}}
\label{sec:proof magnetic}

Consider pairs $(g,\Om)$ given by a Riemannian metric $g$ and a non-degenerate magnetic field $\Om$ on $N$. As explained in Section \ref{sec:results}, the magnetic flow generated by $(g,\Om)$ is the Hamiltonian flow of $H_g(x,p):=(1/2)\|p\|^2$ with respect to the twisted symplectic form $\om=\om_0+\pi^*\Om$. One can check that if $g$ has constant sectional curvature and $\Om$ is the corresponding area form (normalized to have total area equal to one) then $H_g^{-1}(k)$ is of contact type and the magnetic flow restricted to $H_g^{-1}(k)$ generates a free circle action such that $H_g^{-1}(k)/S^1$ is diffeomorphic to $N$ for every $0<k<1/2$ (see, for instance, \cite{Mac}). Denote by $\xi_0$ the corresponding contact structure. 

It is well known that given a pair $(g,\Om)$ there exists $\ep>0$ such that the energy level $H_g^{-1}(k)$ is of contact type for every $k<\ep$ \cite{Ben,GGM}. Fix $S:=H^{-1}(k)$ and let $\xi$ be the contact structure on $S$. We claim that $\xi$ is equivalent to $\xi_0$. As a matter of fact, let $(g_s,\Om_s)$ be a smooth family of pairs given by Riemannian metrics and symplectic forms on $N$, with $0\leq s\leq 1$, such that $g_0$ has constant sectional curvature, $\Om_0$ is the area form (corresponding to $g_0$) and $(g_1,\Om_1)=(g,\Om)$. (The existence of this family follows from the fact that every Riemannian metric on a connected surface is conformally equivalent to a metric with constant sectional curvature.) Define
\[
\ep(s)=\sup\{k_0 \in \R \mid H_{g_s}^{-1}(k)\text{ is of contact type for every }k \in (0,k_0)\}.
\]
By the aforementioned result, $\ep(s)>0$ for every $s \in [0,1]$ and one can check that  $\ep_0:=(1/2)\inf_{s\in [0,1]} \ep(s)$ is positive. Thus, it follows from Gray's stability theorem that the contact structure on $H_g^{-1}(\ep_0)$ is equivalent to the contact structure on $H_{g_0}^{-1}(\ep_0)$. Applying Gray's theorem again we conclude that the contact structure on $H_g^{-1}(\ep_0)$ is equivalent to the contact structure on $S$. Therefore, $S$ is Boothby-Wang and we will use this circle bundle structure in what follows.

Let us split the proof in the cases that $\g=0$ and $\g>1$. In the first case, it was proved in \cite{Ben} that if $k$ is sufficiently small then every periodic orbit $\ga$ of the lift of the magnetic flow to $S^3$ satisfies $\czl(\ga) \geq 3$ (with a trivialization given by a capping disk). Moreover, the lifted contact structure is tight. Thus, the lifted contact form is positively dynamically convex and we can apply Theorems \ref{Thm 1} and \ref{Thm 2} to conclude that the magnetic flow carries an elliptic closed orbit $\ga$ such that $\ga$ is not contractible in $S$ and $\czl(\ga;\Psi)=-1$, where the index is computed using a trivialization $\Psi$ as in Section \ref{sec:trivializations}. The contact structure $\xi$ on $S$ admits a global trivialization $\Phi$ (see \cite[Lemma 6.7]{Ben}) and an easy computation shows that
\[
\czl(\ga;\Phi) = \czl(\ga;\Psi) + 2 = 1.
\]
We claim that $\ga$ is prime. As a matter of fact, arguing by contradiction, suppose that $\ga=\psi^k$ for some $k\geq 2$, where $\psi$ is a prime periodic orbit. Since $\ga$ is not contractible, we have that $k\geq 3$. The fact that $\czl(\psi^2) \geq 3$ implies, by \eqref{eq:mean index}, that the mean index satisfies $\Delta(\psi^2) \geq 2 \implies \Delta(\psi) \geq 1 \implies \Delta(\ga) \geq 3$. But this contradicts the fact that $\czl(\ga)=1$.

Now, let us consider the case that $\g>1$. Denote by $\beta$ the contact form on $S$ that generates our free circle action and let $\vr$ be a simple orbit of $\beta$. As before, $\xi$ admits a global trivialization $\Phi$ and a simple computation shows that $\rs(\vr;\Phi)=-2$. One can check that $[\vr^l] \neq 0$ for every $l \in \N$, $(\vr^l) \neq 0$ for every $1 \leq l < |\chi(N)|$ and $(\vr^{|\chi(N)|})=0$, where $[\vr] \in \pi_1(S)$ and $(\vr) \in H_1(S,\Z)$ denote the homotopy and homology classes respectively. Moreover, there is a $|\chi(N)|$-covering $\tau: \wtl S \to S$ such that $\wtl\beta = \tau^*\beta$ generates a free circle action and the deck transformations are given by the induced action of $\Z_{|\chi(N)|} \subset S^1$. Let $\wtl\vr$ be a (simple) orbit of the Reeb flow of  $\wtl\beta$ and $\wtl a$ its free homotopy class. The contact structure $\wtl\xi$ of $\wtl\beta$ admits a global trivialization $\wtl\Phi$ (given by the pullback of $\Phi$) and $\rs(\wtl\vr;\Phi)=2\chi(N)$.

Let $\alpha$ be the contact form on $S$ whose Reeb flow is the magnetic flow. It follows from the computations in \cite{Ben} that
\begin{equation}
\label{eq:gabriele1}
\czu(\ga;\Phi) \leq 2\chi(N)+1
\end{equation}
for every periodic orbit $\ga$ of $\alpha$ homologous to zero. Indeed, it is proved in \cite[Equation (1) and Remark 6.10]{Ben} that every periodic orbit $\ga$ of the magnetic flow homologous to zero satisfies 
\begin{equation}
\label{eq:gabriele2}
\czl(\ga;\Phi) \leq 2\chi(N)+1.
\end{equation}
It turns out that if we take the inverted magnetic flow $-R_\alpha$ then the same argument yields the inequality $\czl(-\ga;\Phi) \geq 2|\chi(N)|-1$. Indeed, inequality \eqref{eq:gabriele2} is proved showing that the winding interval $I(\ga)$ associated to the linearized flow along $\ga$ satisfies $I(\ga) \subset (-\infty,\chi(N)+1)$; see \cite{Ben} for details. On the other hand, when we take the inverted magnetic flow (with the same trivialization of the contact structure) the corresponding winding interval $I(-\ga)$ satisfies $I(-\ga)=-I(\ga) \subset (|\chi(N)|-1,+\infty)$ implying that $\czl(-\ga;\Phi) \geq 2|\chi(N)|-1$. Consequently, inequality \eqref{eq:gabriele1} follows from \eqref{eq:czl x czu}.

Define $\wtl\alpha=\tau^*\alpha$. By the previous discussion, every periodic orbit $\wtl\ga$ of $\wtl\alpha$ freely homotopic to $\wtl\vr$ has a projection $\ga$ on $S$ homologous to zero. Therefore, $\czu(\wtl\ga)=\czu(\ga) \leq 2\chi(N)+1$ and we conclude that $\wtl\alpha$ is negatively $\wtl a$-dynamically convex. Thus, we can apply Theorems \ref{Thm 1} and \ref{Thm 2} to conclude that $\alpha$ carries an elliptic closed geodesic $\ga$ with free homotopy class $a$ such that $\czu(\ga;\Psi)=1$, where the index is computed using the trivialization $\Psi$ of the contact structure $\xi$ over $\ga$ induced by a trivialization on a fiber of $S$ over $p \in N$ given by a constant symplectic frame of $T_pN$, as discussed in Section \ref{sec:trivializations}. Now, a computation gives the relation
\[
\czu(\ga;\Phi) = \czu(\ga;\Psi) - 2 = -1.
\]
Finally, notice that $\ga$ must be prime because every closed curve with free homotopy class $a$ is simple (observe that $\pi_1(N)$ is torsion free).

\section{Good toric contact manifolds} 
\label{sec:toric}

In this section we provide the necessary information on toric contact manifolds, briefly introduced
in Section~\ref{sec:results} for the statement of Theorem~\ref{thm:convex}. As in that theorem,
we will restrict ourselves to good toric contact manifolds, i.e. those that are determined by a
strictly convex moment cone. For further details we refer the interested reader to~\cite{Le1}
and~\cite{AM}.

\subsection{Toric symplectic cones}

Via symplectization, there is a $1$-$1$ correspondence between co-oriented contact manifolds
and symplectic cones, i.e. triples $(W,\om,X)$ where $(W,\om)$ is a connected symplectic manifold
and $X$ is a vector field, the Liouville vector field, generating a proper $\R$-action
$\rho_t:W\to W$, $t\in\R$, such that $\rho_t^\ast (\om) = e^{t} \om$. A closed symplectic cone is a 
symplectic cone $(W,\om,X)$ for which the corresponding contact manifold $M = W/\R$ is closed.

A toric contact manifold is a contact manifold of dimension $2n+1$ equipped with an effective Hamiltonian
action of the standard torus of dimension $n+1$: $\T^{n+1} = \R^{n+1} / 2\pi\Z^{n+1}$. Also via symplectization,
toric contact manifolds are in $1$-$1$ correspondence with toric symplectic cones, i.e. symplectic cones
$(W,\om,X)$ of dimension $2(n+1)$ equipped with an effective $X$-preserving Hamiltonian $\T^{n+1}$-action,
with moment map $\mu : W \to \R^{n+1}$ such that $\mu (\rho_t (w)) = e^{t} \mu (w)$, for all $w\in W$ and $t\in\R$.
Its moment cone is defined to be $C:= \mu(W) \cup \{ 0\} \subset \R^{n+1}$.

A toric contact manifold is {\it good} if its toric symplectic cone has a moment cone with the following properties.
\begin{definition} \label{def:good}
A cone $C\subset\R^{n+1}$ is \emph{good} if it is strictly convex and there exists a minimal set 
of primitive vectors $\nu_1, \ldots, \nu_d \in \Z^{n+1}$, with 
$d\geq n+1$, such that
\begin{itemize}
\item[(i)] $C = \bigcap_{j=1}^d \{x\in\R^{n+1}\mid 
\ell_j (x) := \langle x, \nu_j \rangle \geq 0\}$.
\item[(ii)] Any codimension-$k$ face of $C$, $1\leq k\leq n$, 
is the intersection of exactly $k$ facets whose set of normals can be 
completed to an integral base of $\Z^{n+1}$.
\end{itemize}
\end{definition}
The analogue for good toric contact manifolds of Delzant's classification theorem for closed toric
symplectic manifolds is the following (see~\cite{Le1})
\begin{theorem} \label{thm:good}
For each good cone $C\subset\R^{n+1}$ there exists a unique closed toric symplectic cone
$(W_C, \om_C, X_C, \mu_C)$ with moment cone $C$.
\end{theorem}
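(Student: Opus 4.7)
The plan is to mimic Delzant's classification of closed toric symplectic manifolds adapted to the conic setting, following Lerman \cite{Le1}.

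For existence, given a good cone $C$ with primitive inward normals $\nu_1,\ldots,\nu_d \in \Z^{n+1}$, I would define the $\Z$-linear map $\beta: \Z^d \to \Z^{n+1}$ by $e_j \mapsto \nu_j$, which is surjective by condition (ii), and let $K$ be the kernel of the induced torus homomorphism $\T^d \to \T^{n+1}$, with Lie algebra $\fk$. Then I would perform symplectic reduction of $(\C^d \setminus \{0\}, \om_{\mathrm{st}})$ by $K$ at level zero: with the standard $\T^d$ moment map $\mu_{\T^d}(z) = \tfrac12(|z_1|^2,\ldots,|z_d|^2)$ and $K$-moment map $\mu_K = \iota^\ast \mu_{\T^d}$ (where $\iota:\fk \hookrightarrow \R^d$ is the inclusion), set $W_C := (\mu_K^{-1}(0) \setminus \{0\})/K$. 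The residual $\T^{n+1} = \T^d/K$-action is Hamiltonian with reduced moment map whose image identifies with $C$ via condition (i), and the radial Liouville vector field $X_{\mathrm{st}} = \tfrac12 \sum_j (z_j \partial_{z_j} + \bar z_j \partial_{\bar z_j})$ descends to a Liouville vector field $X_C$ on $W_C$ because it is $\T^d$-invariant and preserves $\mu_K^{-1}(0)$. Condition (ii) guarantees that for each $z \in \mu_K^{-1}(0)\setminus\{0\}$, the $\T^d$-stabilizer (generated by the $\nu_j$ with $z_j = 0$) is a sub-torus whose integral basis extends to one of $\Z^{n+1}$, hence intersects $K$ trivially; thus $K$ acts freely and $W_C$ is smooth. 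Strict convexity of $C$ then forces the $\R$-orbits of $X_C$ to be properly embedded, yielding a closed toric symplectic cone.

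For uniqueness, suppose $(W, \om, X, \mu)$ is another closed toric symplectic cone with moment cone $C$. Over the interior $C^\circ$ the $\T^{n+1}$-action is free, so $\mu : \mu^{-1}(C^\circ) \to C^\circ$ is a principal $\T^{n+1}$-bundle that is trivial since $C^\circ$ is contractible; a standard action-angle argument produces an equivariant symplectomorphism with the corresponding open subset of $W_C$ intertwining both the moment maps and the Liouville vector fields. The harder step is extending across the singular strata $\mu^{-1}(\partial C)$: for each codimension-$k$ face $F$, the equivariant Marle--Guillemin--Sternberg local normal form shows that a neighborhood of $\mu^{-1}(F)$ is equivariantly symplectomorphic to a model determined solely by the normals $\nu_{j_1},\ldots,\nu_{j_k}$ of the facets containing $F$; by condition (ii) this model agrees with the one appearing in $W_C$ from the reduction description. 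Inductively patching these local equivariant symplectomorphisms with the one on $\mu^{-1}(C^\circ)$, proceeding over faces of increasing codimension, produces the desired global equivariant symplectomorphism intertwining $X$ and $X_C$.

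The main obstacle is making the patching globally consistent: local equivariant symplectomorphisms near adjacent faces differ on overlaps by elements of the sheaf of equivariant symplectic automorphisms commuting with $X$, and one must show the resulting \v{C}ech cocycles are coboundaries. Contractibility of every face of a strictly convex good cone kills the relevant cohomology, so the induction over codimension goes through exactly as in Delzant's original argument. The additional requirement that the patching intertwine the Liouville vector fields is essentially automatic: in every local model, the conic structure on the moment image, together with the requirement $\mathcal{L}_X \om = \om$, pins $X$ down uniquely modulo the already-fixed $\T^{n+1}$-action, so any equivariant symplectomorphism preserving $\mu$ necessarily intertwines the Liouville fields.
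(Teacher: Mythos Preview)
The paper does not prove this theorem: it is stated with a citation to Lerman~\cite{Le1}, and the only argument the paper supplies is the explicit symplectic-reduction construction of $W_C$ given immediately afterwards in Section~\ref{ssection:models}, which establishes existence. Your existence argument is exactly that construction (same $\beta$, same $K$, same reduction of $(\C^d\setminus\{0\},\om_{\rm st},X_{\rm st})$ at level zero of $\phi_K$), so on that half you agree with the paper verbatim.

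For uniqueness the paper offers nothing beyond the reference, so there is no in-paper proof to compare against. Your outline---trivialize over the interior via action-angle, use the Marle--Guillemin--Sternberg normal form near each face, and patch inductively over the face stratification, with the \v Cech obstruction killed by contractibility of the faces---is the standard Delzant-type argument and is the approach Lerman takes in~\cite{Le1}. One small caution: your claim that $\beta:\Z^d\to\Z^{n+1}$ is surjective ``by condition (ii)'' needs a word of justification, since condition (ii) only gives that the $n$ normals at each edge extend to an integral basis; surjectivity onto the full lattice $\Z^{n+1}$ (needed so that $\T^d/K\cong\T^{n+1}$) uses this together with the existence of at least one edge, which follows from strict convexity. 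Also, your final remark that any equivariant symplectomorphism preserving $\mu$ automatically intertwines the Liouville fields is slightly too quick: what pins down $X$ is the homogeneity condition $\mu(\rho_t(w))=e^t\mu(w)$, so you should phrase the patching as preserving both $\mu$ and the $\R$-action, not just $\mu$. These are minor points; the overall strategy is correct and matches the cited source.
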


One source for examples of good toric contact manifolds is the Boothby-Wang (prequantization)
construction over integral closed toric symplectic manifolds. The corresponding good cones have 
the form
\[
C:= \left\{z(x,1)\in\R^{n}\times\R\mid x\in P\,,\ z\geq 0\right\}
\subset\R^{n+1}
\]
where $P\subset\R^n$ is a Delzant polytope with vertices in the integer lattice $\Z^n\subset\R^n$.

\subsection{Toric contact forms and Reeb vectors}

Let $(W,\omega, X)$ be a good toric symplectic cone of dimension $2(n+1)$, with
corresponding closed toric contact manifold $(M,\xi)$. Denote by $\Xx_X (W, \omega)$
the set of $X$-preserving symplectic vector fields on $W$ and by $\Xx (M,\xi)$
the corresponding set of contact vector fields on $M$. The $\T^{n+1}$-action
associates to every vector $\nu \in \R^{n+1}$ a vector field
$R_\nu \in \Xx_X (W,\omega) \cong \Xx (M, \xi)$. We will say that a
contact form $\alpha_\nu \in \Omega^1 (M,\xi)$ is \emph{toric} if
its Reeb vector field $R_{\alpha_\nu}$ satisfies
\[
R_{\alpha_\nu} = R_\nu \quad\text{for some $\nu\in\R^{n+1}$.}
\]
In this case we will say that $\nu\in\R^{n+1}$ is a \emph{Reeb vector}
and that $R_\nu$ is a \emph{toric Reeb vector field}.
The following proposition characterizes which $\nu\in\R^{n+1}$ are Reeb 
vectors of a toric contact form on $(M,\xi)$.

\begin{prop}[{\cite{MSY} or \cite[Proposition 2.19]{AM}}] \label{prop:sasaki}
Let $\nu_1, \ldots, \nu_d \in \R^{n+1}$ be the defining integral normals
of the moment cone $C\in\R^{n+1}$ associated with $(W,\omega,X)$ and 
$(M,\xi)$. The vector field $R_\nu \in \Xx_X (W,\omega) \cong \Xx(M,\xi)$
is the Reeb vector field of a toric contact form 
$\alpha_\nu \in \Omega^1 (M,\xi)$ if and only if
\[
\nu = \sum_{j=1}^d a_j \nu_j \quad\text{with $a_j\in\R^+$ for all
$j=1, \ldots, d$.}
\]
\end{prop}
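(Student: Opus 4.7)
The plan is to translate the condition on $\nu$ into a positivity condition for the Hamiltonian associated with $R_\nu$ on the symplectic cone, and then into a convex-geometric statement about $C$ and its dual.

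Since the $\T^{n+1}$-action is Hamiltonian with an $X$-homogeneous moment map $\mu$ (so $\mathcal{L}_X \mu = \mu$), the vector field $R_\nu$ is the Hamiltonian vector field of the linear function $H_\nu := \langle \mu, \nu \rangle$, and a short calculation using this homogeneity gives $\lambda(R_\nu) = H_\nu$, where $\lambda := \iota_X \omega$ is the Liouville one-form. I would first show that $R_\nu$ is the Reeb vector field of a toric contact form on $(M,\xi)$ if and only if the hypersurface $M_\nu := H_\nu^{-1}(1) \subset W$ is a smooth compact cross-section of the Liouville $\R$-action. Indeed, on such a cross-section $\alpha_\nu := \lambda|_{M_\nu}$ is automatically a contact form, and since $\alpha_\nu(R_\nu) = H_\nu \equiv 1$ and $\iota_{R_\nu} d\alpha_\nu = \iota_{R_\nu} \omega|_{M_\nu} = -dH_\nu|_{M_\nu} = 0$, its Reeb field is exactly $R_\nu$. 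Conversely, any toric contact form $\alpha$ with Reeb field $R_\nu$ has a section $\Sigma \subset W$ on which $H_\nu = \lambda(R_\nu) = \alpha(R_\nu) \equiv 1$, so $\Sigma = M_\nu$.

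The next observation is that, by the homogeneity $H_\nu(\rho_t(w)) = e^t H_\nu(w)$, the hypersurface $M_\nu$ is a cross-section of the $\R$-action if and only if $H_\nu > 0$ on all of $W$. Since $\mu(W) = C \setminus \{0\}$, this is equivalent to $\langle x, \nu \rangle > 0$ for every $x \in C \setminus \{0\}$, that is, to $\nu$ lying in the interior of the dual cone $C^\vee := \{\eta \in \R^{n+1} \mid \langle x, \eta \rangle \geq 0 \text{ for all } x \in C\}$. The proof is then reduced to the convex-geometric identification $\mathrm{int}(C^\vee) = \{\sum_{j=1}^d a_j \nu_j : a_j > 0\}$. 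The inclusion $\supset$ is immediate from strict convexity: for $\nu = \sum a_j \nu_j$ with all $a_j > 0$ and $x \in C \setminus \{0\}$, we have $\langle x, \nu\rangle = \sum a_j \ell_j(x) > 0$, since all $\ell_j(x) \geq 0$ and not all can vanish (otherwise $x$ would be the apex of the strictly convex cone $C$). For $\subset$ I would use that strict convexity of $C$ forces $C^\vee$ to be a full-dimensional polyhedral cone whose extreme rays are exactly the $\nu_j$, so every element of $C^\vee$ admits a nonnegative expansion $\eta = \sum b_j \nu_j$; given $\nu \in \mathrm{int}(C^\vee)$, I would pick $\varepsilon > 0$ small enough that $\nu - \varepsilon \sum_j \nu_j$ still lies in $C^\vee$ (possible because $\nu$ is interior), apply the nonnegative-expansion result to this difference, and add $\varepsilon \sum_j \nu_j$ back to obtain a strictly positive expansion of $\nu$.

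The step requiring the most care is this last convex-geometric identification, specifically the claim that the minimal defining normals of a strictly convex good cone coincide with the extreme rays of its dual; this is standard polyhedral duality but genuinely needs the strict convexity of $C$ (without it some $\nu_j$ could fail to be extremal and the strict-positive expansion could break down). The analytic content of the argument, by contrast, amounts to the short homogeneity calculation $\mathcal{L}_X H_\nu = H_\nu$ and the verification of the two Reeb equations on the cross-section $M_\nu$.
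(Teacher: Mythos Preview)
The paper does not give its own proof of this proposition: it is stated with attribution to \cite{MSY} and used as a black box, so there is no in-paper argument to compare against. Your proof is correct and is essentially the standard one. The analytic reduction---$R_\nu$ is Reeb for some toric contact form iff $H_\nu=\langle\mu,\nu\rangle>0$ on $W$, which by $\mu(W)=C\setminus\{0\}$ means $\nu\in\mathrm{int}(C^\vee)$---is exactly the mechanism behind the cited result, and your $\varepsilon$-trick for upgrading a nonnegative expansion in $C^\vee=\mathrm{cone}(\nu_1,\dots,\nu_d)$ to a strictly positive one is clean. One minor remark: you do not actually need the stronger claim that the $\nu_j$ are the extreme rays of $C^\vee$; the identity $C^\vee=\mathrm{cone}(\nu_j)$ (which follows from biduality $(D^\vee)^\vee=D$ applied to $D=\mathrm{cone}(\nu_j)$) together with full-dimensionality of $C^\vee$ (equivalent to strict convexity of $C$) already suffices for your argument.
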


\subsection{Explicit models and fundamental group}
\label{ssection:models}

The existence part of Theorem~\ref{thm:good} is given by an explicit symplectic reduction
construction, which we now briefly describe (as we did in~\cite{AM}). For complete details
the interested reader may look, for example, at~\cite{Le1}.

Let $C\subset(\R^{n+1})^\ast$ be a good cone defined by
\begin{equation} \label{eq:cone}
C = \bigcap_{j=1}^d \{x\in(\R^{n+1})^\ast\mid
\ell_j (x) := \langle x, \nu_j \rangle \geq 0\}\,
\end{equation}
where $d\geq n+1$ is the number of facets and each $\nu_j$ is a primitive 
element of the lattice $\Z^{n+1} \subset \R^{n+1}$ (the inward-pointing 
normal to the $j$-th facet of $C$).

Let $(e_1, \ldots, e_d)$ denote the standard basis of $\R^d$, and define
a linear map $\beta : \R^d \to \R^{n+1}$ by 
\begin{equation} \label{def:beta}
\beta(e_j) = \nu_j\,,\ j=1,\ldots,d\,. 
\end{equation}
The conditions of Definition~\ref{def:good} imply that
$\beta$ is surjective. Denoting by $\fk$ its kernel, we have short
exact sequences
\[
0 \to \fk \stackrel{\iota}{\to} \R^d \stackrel{\beta}{\to}
\R^{n+1} \to 0
\ \ \ \mbox{and its dual}\ \ \ 
0 \to (\R^{n+1})^\ast \stackrel{\beta^\ast}{\to} (\R^d)^\ast 
\stackrel{\iota^\ast}{\to}\fk^\ast \to 0\ .
\]
Let $K$ denote the kernel of the map from $\T^d = \R^d/2\pi\Z^d$ to
$\T^{n+1} = \R^{n+1}/2\pi\Z^{n+1}$ induced by $\beta$. More precisely,
\begin{equation} \label{eq:K}
K = \left\{ [y]\in \T^d\mid \sum_{j=1}^{d} y_j \nu_j
\in 2\pi\Z^n\right\}\,.
\end{equation}
It is a compact abelian subgroup of $\T^d$ with Lie algebra 
$\fk = \ker (\beta)$. 

Consider $\R^{2d}$ with its standard symplectic form
\[
\om_{\rm st} = du\wedge dv = \sum_{j=1}^d du_j\wedge dv_j
\]
and identify $\R^{2d}$ with $\C^d$ via $z_j = u_j + i v_j\,,\ 
j=1,\ldots,d$. The standard action of $\T^d$ on $\R^{2d}\cong
\C^d$ is given by
\[
y \cdot z = \left( e^{i y_1} z_1, \ldots, e^{i y_d} z_d\right)
\]
and has a moment map given by
\[
\phi_{\T^d} (z_1,\ldots,z_d) = \sum_{j=1}^d \frac{|z_j|^2}{2}\, e_j^\ast 
\in (\R^d)^\ast\,.
\]
Since $K$ is a subgroup of $\T^d$, $K$ acts on $\C^d$ with moment map
\begin{equation}\label{def:phiK}
\phi_K = \iota^\ast \circ \phi_{\T^d} =
\sum_{j=1}^d \frac{|z_j|^2}{2} \iota^\ast(e_j^\ast)\in \fk^\ast\ .
\end{equation}

The toric symplectic cone $(W_C,\om_C, X_C)$ associated to the good
cone $C$ is the symplectic reduction of 
$(\R^{2d}\setminus\{0\}, \om_{\rm st} = du \wedge dv, X_{\rm st} =  u\, \partial / \partial u 
+ v\, \partial / \partial v)$ with respect to the $K$-action, i.e.
\[
W_C = Z / K\ \ \mbox{where}\ \ Z=\phi_K^{-1}(0) \setminus\{0\}
\equiv\ \mbox{zero level set of the moment map in $\R^{2d}\setminus\{0\}$,}
\]
the symplectic form $\om_C$ comes from $\om_{\rm st}$ via symplectic 
reduction, while the $\R$-action of the Liouville vector field $X_C$ and 
the action of $\T^{n+1} \cong \T^d/K$ are induced by the actions of 
$X_{\rm st}$ and $\T^d$ on $Z$.

Lerman showed in~\cite{Le2} how to compute the fundamental group of
a good toric symplectic cone, which is canonically isomorphic to the
fundamental group of the associated good toric contact manifold.
\begin{prop} \label{prop:pi1} 
Let $W_C$ be the good toric symplectic cone determined by a good
cone $C\subset\R^{n+1}$. Let $\Nn := \Nn\{\nu_1, \ldots, \nu_d\}$
denote the sublattice of $\Z^{n+1}$ generated by the primitive integral 
normal vectors to the facets of $C$. The fundamental group of $W_C$ 
is the finite abelian group
\[
\Z^{n+1}/\Nn\,.
\]
\end{prop}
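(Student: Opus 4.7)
The plan is to exploit the symplectic reduction presentation $W_C = Z/K$ from Section~\ref{ssection:models}. A crucial fact, which follows directly from condition~(ii) of Definition~\ref{def:good}, is that the compact abelian group $K$ acts \emph{freely} on $Z = \phi_K^{-1}(0)\setminus\{0\}$, so $Z \to W_C$ is a principal $K$-bundle. The long exact sequence of homotopy groups for this bundle reads
\[
\pi_1(K)\to \pi_1(Z) \longrightarrow \pi_1(W_C) \longrightarrow \pi_0(K) \longrightarrow \pi_0(Z),
\]
so once we show that $Z$ is connected and simply connected, this collapses to an isomorphism $\pi_1(W_C) \cong \pi_0(K)$ and the problem reduces to computing $\pi_0(K)$.

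The component group $\pi_0(K)$ is purely algebraic. The map $\beta : \R^d \to \R^{n+1}$ of~\eqref{def:beta} is surjective (the $\nu_j$ span $\R^{n+1}$ since $C$ is $(n+1)$-dimensional), so it descends to a surjection $\bar\beta : \T^d \to \T^{n+1}$ with kernel $K$. The long exact sequence of homotopy groups for the fibration $K \to \T^d \stackrel{\bar\beta}{\to} \T^{n+1}$ gives
\[
\Z^d = \pi_1(\T^d) \stackrel{\beta|_{\Z^d}}{\longrightarrow} \pi_1(\T^{n+1}) = \Z^{n+1} \longrightarrow \pi_0(K) \longrightarrow 0.
\]
Since $\beta(e_j) = \nu_j$, the image of $\beta|_{\Z^d}$ is precisely the sublattice $\Nn$, so $\pi_0(K) \cong \Z^{n+1}/\Nn$; surjectivity of $\beta$ forces $\Nn$ to have full rank in $\Z^{n+1}$, making the quotient finite abelian.

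The main obstacle is therefore the simple connectedness of $Z$, and this is the step I expect to require the most care. The approach I would take is to exploit the residual $\T^d$-action on $Z$ together with its moment map $\phi_{\T^d}$, which sends $Z$ onto $\beta^\ast(C)\setminus\{0\}\cong C\setminus\{0\}$. The target is contractible along the radial direction, and over the relative interior of a codimension-$k$ face of $C$ the fiber is a $\T^d$-orbit of dimension $d-k$. Condition~(ii) of Definition~\ref{def:good} ensures that over each facet the collapsing isotropy circle is exactly one of the coordinate circles $S^1_j \subset \T^d$, and that the full collection of these $d$ circles generates $\pi_1(\T^d)$. Consequently, any loop in the generic stratum of $Z$ can be homotoped, via the radial retraction of $C\setminus\{0\}$, into a fiber over the relative interior of a facet and then contracted there; a van Kampen argument applied to the face stratification of $C$ then yields $\pi_1(Z)=0$, and connectedness of $Z$ follows at once from the same picture. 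Alternatively, one may invoke the classical toric-topology fact that the moment-angle manifold associated to a good (simple) cone is $2$-connected. Combining the two computations, the bundle exact sequence collapses to $0\to\pi_1(W_C)\to\pi_0(K)\to 0$, identifying $\pi_1(W_C)$ with $\Z^{n+1}/\Nn$, as claimed.
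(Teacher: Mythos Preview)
The paper does not actually prove this proposition; it simply attributes the result to Lerman~\cite{Le2} without reproducing the argument. Your outline is essentially the standard approach and is correct in its overall architecture: the presentation $W_C = Z/K$ with $K$ acting freely on $Z$ (a consequence of condition~(ii) in Definition~\ref{def:good}), the computation $\pi_0(K)\cong\Z^{n+1}/\Nn$ via the homotopy sequence of $K\hookrightarrow\T^d\twoheadrightarrow\T^{n+1}$, and the identification $\pi_1(W_C)\cong\pi_0(K)$ once $Z$ is known to be connected and simply connected.

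The one place that needs tightening is your sketch of $\pi_1(Z)=0$. The sentence ``any loop in the generic stratum of $Z$ can be homotoped \ldots\ into a fiber over the relative interior of a facet and then contracted there'' oversimplifies: a loop in a generic $\T^d$-fiber representing a class $\sum_j m_j e_j$ is not killed at a single facet, which collapses only one coordinate circle $S^1_j$. The van Kampen argument you allude to does go through, but it has to run over the full face poset of $C$, not just the facets, so that all $d$ generators of $\pi_1(\T^d)$ get killed. Your stated alternative---invoking the classical fact that the moment-angle manifold associated to a simple polytope (equivalently, to the link of a good cone) is $2$-connected---is the cleanest and most standard route, and is how Lerman's argument is typically packaged; note that $Z$ deformation retracts onto this moment-angle manifold via the radial Liouville $\R$-action, so the citation is legitimate.
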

Hence, for simply connected good toric contact manifolds we have that
the $\Z$-span of the set of integral normals $\{\nu_1, \ldots, \nu_d\}$ is
the full integer lattice $\Z^{n+1}\subset\R^{n+1}$

\subsection{Cylindrical contact homology of good toric contact manifolds}
\label{ssection:homology}

In this section we provide the necessary information, taken from~\cite{AM}
and needed for the proof of Theorem \ref{thm:convex}, on how to compute the 
cylindrical contact homology of a good toric contact manifold $(M,\xi)$. We will
also show that $k_-$ is always finite and can be used to provide a lower bound 
for the Robbin-Salamon index of the generic orbit of any periodic Reeb flow on 
$(M,\xi)$ that comes from an $S^1$-subaction of the toric $\T^{n+1}$-action.

Let $(W,\omega,X)$ be the good toric symplectic cone associated with $(M,\xi)$
and $C\subset (\R^{n+1})^\ast$ its good moment cone defined by~(\ref{eq:cone}).
Consider a toric Reeb vector field $R_\nu \in \Xx_X (W,\omega) \cong \Xx (M,\xi)$
determined by
\[
\nu = \sum_{j=1}^d a_j \nu_j \quad\text{with $a_j\in\R^+$ for all
$j=1, \ldots, d$,}
\]
as in Proposition~\ref{prop:sasaki}. By a small abuse of notation, we will also write
\[
R_\nu = \sum_{j=1}^d a_j \nu_j \,.
\]
Moreover, we will assume that
\[
\text{the $1$-parameter subgroup generated by $R_\nu$ is dense in $\T^{n+1}$,}
\]
which is equivalent to the corresponding toric contact form being non-degenerate. 
In fact, the toric Reeb flow of $R_\nu$ on $(M,\xi)$ has exactly $m$ simple closed
orbits $\gamma_1, \ldots,\gamma_m$, all non-degenerate, corresponding to the $m$ edges
$E_1,\ldots,E_m$ of the cone $C$. For each $\ell =1, \ldots, m$, we can use the
symplectic reduction construction of $(W, \omega, X)$ to lift the $X$-invariant
Hamiltonian flow of $R_\nu$ to a linear flow on $\C^d = \R^{2d}$ that has a periodic
orbit $\tgamma_\ell$ as a lift of $\gamma_\ell$. It then follows from Lemma 3.4 
in~\cite{AM} that
\[
\rs (\gamma_\ell^N) = \rs (\tgamma_\ell^N)\,,\ 
\text{for all $\ell =1,\ldots,m$, and all iterates $N\in\N$,}
\]
where $\rs$ is the Robbin-Salamon extension of the Conley-Zehnder index.

To compute $\rs (\tgamma_\ell^N)$ one can use the global trivialization of 
$T\R^{2d}$ and the fact that, since the lifted flow is given by the standard action
on $\R^{2d}$ of a $1$-parameter subgroup of $\T^d$, the index can be directly
computed from the corresponding Lie algebra vector $\tilde{R}_\nu^\ell \in \R^d$,
which necessarily satisfies
\[
\beta (\tilde{R}_\nu^\ell) = R_\nu\,,\ 
\text{where $\beta:\Z^d \to \Z^{n+1}$ is the linear surjection defined by~(\ref{def:beta}).}
\]
In fact, denoting by $F_{\ell_1},\ldots, F_{\ell_n}$, the $n$ facets of $C$ that meet at
the edge $E_\ell$, we have that
\[
\tilde{R}_\nu^\ell = \sum_{j=1}^n b_j^\ell e_{\ell_j}  + b^\ell \tilde{\eta}_\ell\,,\ 
\text{with $\tilde{\eta}_\ell\in\Z^d$ and $b_1^\ell, \ldots, b_n^\ell, b^\ell \in \R$.}
\]
Since we are assuming that $R_\nu$ generates a dense $1$-parameter subgroup of
$\T^{n+1}$, we have that the $n+1$ real numbers $\{b_1^\ell , \ldots, b_n^\ell, b^\ell\}$
are $\Q$-independent, which implies that
\[
\rs (\tgamma_\ell^N) = \text{even}\, + \, n\,, \ \forall N\in\N\, .
\]
Hence, the parity of the Robbin-Salamon index is the same for all periodic orbits
of $R_\nu$.

We conclude that the cylindrical contact homology of every toric non-degenerate contact form $\alpha_\nu$ is well defined since its differential vanishes identically. Consequently, $HC_\ast (\alpha_\nu)$ is isomorphic to the chain complex generated by the closed orbits of $\alpha_\nu$ (note that every periodic orbit of $\alpha_\nu$ is good). In particular, all of its periodic orbits are homologically essential, i.e. non-zero in $HC_\ast (\alpha_\nu)$.

\begin{proposition} \label{prop:finite}
If $(M,\xi)$ is a good toric contact manifold, then there exists a toric non-degenerate contact form $\alpha_\nu$ such that
\[
k_- := \inf\{k \in \Z\mid HC_k(\alpha_\nu) \neq 0\} 
\]
is finite.
\end{proposition}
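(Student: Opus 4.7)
The plan is to exhibit a toric non-degenerate contact form $\alpha_\nu$ every simple closed Reeb orbit of which has strictly positive mean Robbin--Salamon index. Once this is done, the indices of all iterates are forced to grow to $+\infty$, so the degrees of the generators of $HC_\ast(\alpha_\nu)$ are bounded below and $k_-$ is finite.

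Pick $\nu=\sum_{j=1}^{d} a_j \nu_j$ with $a_j>0$ and generic enough that the one-parameter subgroup of $\T^{n+1}$ generated by $\nu$ is dense. By Proposition~\ref{prop:sasaki}, this yields a toric contact form $\alpha_\nu$ supporting $\xi$, and the density hypothesis makes $\alpha_\nu$ non-degenerate with closed Reeb orbits exactly the iterates $\gamma_\ell^N$ ($\ell=1,\dots,m$, $N\in\N$) of the $m$ simple orbits associated to the edges of the moment cone $C$. The parity computation of Section~\ref{ssection:homology} shows that the differential of $CC_\ast(\alpha_\nu)$ vanishes identically, so $HC_\ast(\alpha_\nu)$ is a free $\Q$-vector space on the $\gamma_\ell^N$ with $\gamma_\ell^N$ in degree $\rs(\gamma_\ell^N)$.

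To compute the mean Robbin--Salamon index of $\gamma_\ell$, lift it to the orbit $\tilde\gamma_\ell\subset\R^{2d}$ and write
\[
\tilde R_\nu^\ell \;=\; \sum_{j=1}^{n} b_j^\ell\, e_{\ell_j} \;+\; b^\ell\, \tilde\eta_\ell
\]
as in Section~\ref{ssection:homology}. The transverse linearized Reeb flow decomposes as a product of rotations at rates $b_1^\ell,\dots,b_n^\ell$, and the signature axiom of $\rs$ applied coordinate-wise yields
\[
\Delta(\gamma_\ell) \;=\; \frac{T_\ell}{\pi}\sum_{j=1}^{n} b_j^\ell \;=\; \frac{T_\ell}{\pi}\,\langle \nu, v_\ell\rangle,\qquad v_\ell := \sum_{j=1}^n \nu_{\ell_j}^\ast,
\]
where $T_\ell$ is the minimal period of $\gamma_\ell$.

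The main obstacle is the convex-geometric claim that $v_\ell \in C\setminus\{0\}$ for every edge $E_\ell$. By its defining conditions $\langle v_\ell,\nu_{\ell_k}\rangle = 1$ and $\langle v_\ell,\eta_\ell\rangle = 0$, the vector $v_\ell$ lies on the affine line $L_\ell\subset\R^{n+1}$ parallel to $\eta_\ell$ at height one over each adjacent facet $F_{\ell_j}$; thus $v_\ell\in C$ reduces to checking $\langle v_\ell,\nu_k\rangle \geq 0$ for each of the finitely many $k\notin\{\ell_1,\dots,\ell_n\}$, which I plan to derive from the good-cone hypothesis together with the positivity $\langle \eta_\ell,\nu_k\rangle > 0$ (which holds since $E_\ell$ is not contained in $F_k$). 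Granting this, the strict convexity of $C$ forces $\langle \nu, v_\ell\rangle > 0$ for every $\nu$ in the interior of $C^\vee$, hence $\Delta(\gamma_\ell)>0$ for every $\ell$. The inequality $|\rs(\gamma_\ell^N) - N\Delta(\gamma_\ell)| \leq n$ from~\eqref{eq:mean index} then yields $\rs(\gamma_\ell^N) \to +\infty$, so $\inf_N \rs(\gamma_\ell^N)$ is finite for each of the finitely many edges $E_\ell$, and taking the minimum over $\ell$ gives the required finite integer $k_-$.
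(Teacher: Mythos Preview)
Your overall strategy---find a toric non-degenerate $\alpha_\nu$ whose simple closed Reeb orbits all have strictly positive mean index, then invoke $|\rs(\gamma^N)-N\Delta(\gamma)|\leq n$---is exactly the paper's. The execution, however, breaks down.

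The mean index formula $\Delta(\gamma_\ell)=\tfrac{T_\ell}{\pi}\sum_{j=1}^n b_j^\ell$ is not correct in the trivialization the paper uses. The index of $\gamma_\ell$ equals that of the lifted orbit $\tilde\gamma_\ell\subset\R^{2d}$, and the latter is computed from \emph{all} $d$ components of $\tilde R_\nu^\ell$, not just the $n$ ``transverse'' ones; the contribution of $b^\ell\tilde\eta_\ell$ does not drop out. Moreover, the identification $\sum_j b_j^\ell=\langle\nu,v_\ell\rangle$ conflates the decomposition of $\tilde R_\nu^\ell$ in $\R^d$ with a decomposition of $\nu$ in $\R^{n+1}$: these coincide only when $\beta(\tilde\eta_\ell)=\eta_\ell$, which you have not arranged. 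Already for $S^3$ (normals $\nu_1=e_1$, $\nu_2=e_2-e_1$, edge $E_1$ with the natural choice $\eta_1=e_2$) your formula gives $\langle\nu,v_1\rangle=a_1-a_2$, which is negative for $a_1<a_2$, whereas the actual mean index $2a_1/a_2+2$ is always positive. The same example shows $v_1=e_1^\ast\notin C$, since $\langle e_1^\ast,\nu_2\rangle=-1$; so the convex-geometric claim you flagged as still-to-prove is in fact false as stated, and the difficulty is genuine rather than merely deferred.

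The paper sidesteps all of this by first considering the \emph{degenerate} Reeb vector $R_1=\sum_{j=1}^d\nu_j$, whose obvious lift $\tilde R_1=\sum_j e_j$ generates the diagonal $S^1$-action on $\C^d$. Every $2\pi$-periodic orbit of this lift has Robbin--Salamon index $2d$, hence every simple closed orbit of $R_1$ has mean index $2d/N>0$ for the appropriate $N\in\N$. Continuity of the mean index then passes this positivity to any sufficiently small non-degenerate perturbation $R_\nu$, avoiding any edge-by-edge computation.
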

\begin{proof}
It is enough to show that there exists a toric non-degenerate contact form $\alpha_\nu$ such that every periodic of its Reeb flow $R_\nu$ has positive mean index (cf. Section~\ref{sec:index_orbits} for the definition and continuity property of the mean index).

Let us consider the (degenerate) toric Reeb vector field $R_1$ given by
\[
R_1 = \sum_{j=1}^d \nu_j \in \R^{n+1}\,.
\]
A possible lift to $\R^d$ is given by
\[
\tilde{R}_1 = \sum_{j=1}^d e_j \,,
\]
which generates the standard diagonal $2\pi$-periodic $S^1$-action on $\C^d$. Its
$2\pi$-periodic orbits have Robbin-Salamon index $2d$, which implies that any
$2\pi$-periodic orbit of $R_1$ has Robbin-Salamon index $2d$.
Since $R_1$ generates an $S^1$-action on $(M,\xi)$, we conclude that any such
$2\pi$-periodic orbit has mean index $2d$. Moreover,
\[
\forall\  \text{simple periodic orbit}\ \gamma\text{ of } R_1\,,\  \ \exists\  N\in\N \ \text{such that}\ \gamma^N \ \text{is $2\pi$-periodic.} 
\]
The mean index of such $\gamma$ is then equal to $2d/N$. Hence, we conclude
that all periodic orbits of $R_1$ have positive mean index. 

By continuity of mean index, the same is true
for any sufficiently small perturbation of $R_1$, which can then be chosen to give
a non-degenerate $R_\nu$ with the required property.
\end{proof}

In the proof of Theorem \ref{thm:convex} we will need the following estimate.

\begin{proposition} \label{prop:estimate2}
Let $\nu\in\Z^{n+1}$ be a primitive integral Reeb vector and $R_\nu$ the corresponding toric Reeb
vector field. Denote by $\gamma_\nu$ any $2\pi$-periodic Reeb orbit of the $2\pi$-periodic Reeb
flow on $(M,\xi)$ generated by $R_\nu$. Then there exists a toric non-degenerate contact form $\alpha_{\nu_\ep}$,
with $\nu_\ep \in \R^{n+1}$ arbitrarily close to $\nu$, such that
\[
\rs (\gamma_\nu) - n \geq k_-\,,
\] 
where $k_- :=  \inf\{k \in \Z\mid HC_k(\alpha_{\nu_\ep}) \neq 0\}$.
\end{proposition}
\begin{proof}
Since $\nu\in\Z^{n+1}$ we can express $R_\nu$ as
\[
R_\nu = \sum_{j=1}^d b_j \nu_j \ \text{with}\ b_j \in \Z^{n+1}\,,
\]
and a possible lift to $\R^d$ is given by
\[
\tilde{R}_\nu = \sum_{j=1}^d b_j e_j \,.
\]
Such $\tilde{R}_\nu$ generates a $2\pi$-periodic flow on $\C^d$ with
\[
\rs (\tgamma_\nu) = \sum_{j=1}^d 2b_j\,,\ \text{where $\tgamma_\nu$ is any $2\pi$-periodic orbit of $\tilde{R}_\nu$.}
\]

Choose any edge $E$ of the good moment cone $C$ of the symplectization
$(W,\omega,X)$ of $(M,\xi)$. As pointed out above, the edge $E$ determines a simple closed orbit 
$\gamma_\nu^E$ of the toric Reeb vector $R_\nu$ on $(M,\xi)$. Since the flow of $R_\nu$ is $2\pi$-periodic, 
by taking a multiple of $\gamma_\nu^E$ if necessary, we can assume that $\gamma_\nu^E$ is a
(possibly non-simple) $2\pi$-periodic orbit of $R_\nu$. Denote by $\tgamma_\nu^E$ its lift as a $2\pi$-periodic
orbit of $\tilde{R}_\nu$ on $\C^d$. We then have that
\[
\rs (\gamma_\nu) = \rs (\gamma_\nu^E) = \rs (\tgamma_\nu^E)  = \sum_{j=1}^d 2b_j\,.
\]

Assume, without any loss of generality, that the normals to the $n$ facets of $C$ that meet 
at $E$ are  $\nu_1, \ldots,\nu_n \in \Z^{n+1}$. It then follows from the explicit form of the
symplectic reduction construction described above that
\[
\tgamma_\nu^E \subset Z \cap \{ z\in\C^d\mid \ z_1 = \cdots = z_n = 0 \}\,.
\]
This implies that any small perturbation of $\tilde{R}_\nu$ of the form
\[
\tilde{R}_{\nu_\ep} = \sum_{j=1}^n (b_j -\ep_j) e_j + \sum_{j=n+1}^d b_j e_j\,,\ 
\ep = (\ep_1, \ldots, \ep_n)\in (\R^+)^n\,,
\]
has $\tgamma_{\nu_\ep}^E := \tgamma_{\nu}^E$ as one of its $2\pi$-periodic orbits.
For sufficiently small $\ep$, the Robbin-Salamon index of $\tgamma_{\nu_\ep}^E$ is
given by
\[
\rs (\tgamma_{\nu_\ep}^E) = \sum_{j=1}^n (2 \lfloor b_j -\ep_j\rfloor +1)
+ \sum_{j=n+1}^d 2 b_j = \sum_{j=1}^n (2(b_j -1) +1) + \sum_{j=n+1}^d 2b_j =
\rs (\tgamma_{\nu}^E) - n\,,
\]
where $\lfloor b \rfloor := \max \{m\in\Z\mid  m\leq b \}$. For sufficiently small and generic
$\ep = (\ep_1, \ldots, \ep_n)\in (\R^+)^n$, where here generic means 
$\ep_1,\ldots,\ep_n \in \R^+ \setminus \Q^+$ are rationally independent, the vector
\[
R_{\nu_\ep} = 
\sum_{j=1}^n (b_j - \ep_j) \nu_j + \sum_{j=n+1}^d b_j \nu_j
\]
is still a toric Reeb vector, which is non-degenerate and has a $2\pi$-periodic orbit
$\gamma_{\nu_\ep}^E$ with a lift to $\C^d$ given by $\tgamma_{\nu_\ep}^E$.
We then have that
\[
\rs (\tgamma_\nu^E) - n = \rs (\tgamma_{\nu_\ep}^E) =
\rs (\gamma_{\nu_\ep}^E) \geq k_-\,,
\]
where the last inequality follows from the already pointed out fact that any closed orbit of a 
non-degenerate toric Reeb vector, such as $R_{\nu_\ep}$, is non-zero in $HC_\ast (\alpha_{\nu_\ep})$.
\end{proof}


Note that $k_-$ is an invariant of the contact structure once the foundational transversality issues in contact homology are resolved. However, this is not essential here and under suitable conditions one can use, for instance, equivariant symplectic homology to bypass these transversality issues.

\subsection{Examples}

The standard contact sphere $(S^{2n+1}, \xi_{\rm st})$ is the most basic example of
a good toric contact manifold. Its symplectization is 
$(\R^{2(n+1)}\setminus \{ 0\}, \omega_{\rm st}, X_{\rm st})$ and its moment cone has
$d=n+1$ facets determined by the normals
\[
\nu_j = e_j\,,\ j=1,\ldots,n\,,\ \text{and}\ \nu_{n+1} = e_{n+1} - \sum_{j=1}^n e_j \,,
\]
where $\{e_1, \ldots, e_{n+1}\}$ is the standard basis of $\R^{n+1}$. The corresponding
symplectic reduction construction is trivial, i.e. the map $\beta$ defined by~(\ref{def:beta})
is an isomorphism and $K\subset\T^d$ is trivial.

In~\cite{AM} we discussed a family of good toric contact structures
$\xi_k$ on $S^2 \times S^3$, originally considered in~\cite{GW1}. Their good moment 
cones $C(k)\subset\R^3$, $k \in \{0, 1, 2, \ldots\}$, have four facets determined by the normals
\[
\nu_1 = (1,0,1)\,,\ \nu_2 = (0,-1,1)\,,\ ,\nu_3 = (0,k,1)\ \text{and}\ \nu_4 = (-1,2k-1,1)\,.
\]
The corresponding  symplectic reduction is non-trivial, with $K$ a circle subgroup 
of $\T^4$. Their cylindrical contact homology can be explicitly computed, using the 
method described in the previous section, with the following result:
\[
\rank HC_\ast (S^2  \times S^3, \xi_k ) =
\begin{cases}
k & \text{if $\ast = 0$;} \\
2k+1  & \text{if $\ast = 2$;} \\
2k+2 & \text{if $\ast > 2$ and even;} \\
0 & \text{otherwise.}
\end{cases} 
\]
This shows that
\[
k_- (S^2 \times S^3, \xi_0) = 2 \quad\text{while}\quad k_- (S^2 \times S^3, \xi_k) = 0\,,
\ \forall\, k>0\,.
\]
Note that $(S^2 \times S^3, \xi_0)$ can be identified with the standard contact structure
on the unit cosphere bundle of $S^3$, which is also the Boothby-Wang contact manifold
over the monotone symplectic manifold $(S^2 \times S^2, \omega =\sigma \times \sigma)$,
where $\sigma (S^2) = 2\pi$.

\section{Proof of Theorem \ref{thm:convex}} 
\label{sec:proof convex}

As before, we let $(M,\xi)$ denote a good closed toric contact manifold and
$(W,\om,X)$ its associated good toric symplectic cone, obtained via symplectic
reduction of $(\C^d\setminus\{0\}\cong \R^{2d}\setminus\{0\}, \om_{\rm st}, X_{\rm st})$ by the linear
action of a subtorus $K\subset\T^d$ with moment map 
$\phi_K: \C^{d}\setminus\{0\} \to \fk^\ast$. To simplify notation, let us define
$F:= \phi_K$. Then
\[
W = Z/K \quad\text{where}\quad Z := F^{-1} (0)\,.
\]
If $K =\{1\}$ is the trivial subgroup of $\T^d$, we will take $F\equiv 0$ so that
$(W,\om, X) = (\R^{2d}\setminus\{0\}, \om_{\rm st}, X_{\rm st})$ and
$(M,\xi) = (S^{2d-1}, \xi_{\rm st})$.

Let $\alpha$ be an arbitrary, i.e. not necessarily toric, contact form on $(M, \xi)$,
and denote by $R_\alpha$ its Reeb vector field. We can always find a $K$-invariant
function $H_\alpha: \C^{d}\setminus\{0\} \to \R$ such that its reduced Hamiltonian flow 
is the flow of $R_\alpha$. Notice that $H_\alpha$ is not unique, since it is only determined
on $Z$ (up to a constant). In Theorem \ref{thm:convex} we assume that $M$ is simply connected and that 
$\alpha$ is a convex contact form, i.e. $H_\alpha$ can be chosen to be convex on $Z$:
\[
d^2_p (H_\alpha) > 0\,,\ \forall\ p\in Z\,.
\]
Our goal in this section is to prove that any such contact form $\alpha$ is positively dynamically 
convex. Based on the discussion in the previous section, it suffices to prove that every periodic orbit 
$\ga$ of $\alpha$ satisfies $\czl(\ga) \geq k_- :=  \inf\{k \in \Z\mid HC_k(\alpha_\nu) \neq 0\}$ for some 
toric non-degenerate contact form $\alpha_\nu$. We will do that according to the following main steps and
using the following notational convention: a subscript $\alpha$ indicates something not necessarily
torus invariant, like the contact form $\alpha$ itself, while a subscript $\lambda$ indicates something
torus invariant.
\begin{itemize}
\item[(1)] Given a periodic orbit $\gamma$ of $\alpha$ we change the convex Hamiltonian $H_\alpha$ by an appropriate component $F_\lambda$ of
the moment map $F$ for the linear $K$-action on $\C^d$, so that $\ga$ has a lift $\tgamma$ in $Z\subset \C^d$ which is a periodic orbit of the Hamiltonian flow of
\[
H_{\alpha,\lambda} := H_\alpha - F_\lambda.
\]
Note that such an $F_\lambda : \C^d \to \R$ is quadratic, being a component of the moment map for
the standard action on $\C^d$ of the subtorus $K\subset \T^d$. Moreover, it follows from  Lemma 3.4 
in~\cite{AM} that
\[
\rs (\gamma) = \rs (\tgamma)\,.
\]
(Here we are using the assumption that the linearized Hamiltonian flow of $H_\alpha$ along $\gamma$ satisfies $d\Phi^t_{H_\alpha}(\nabla F_\kappa)=\nabla F_\kappa$ for all $t \in \R$ and every component $F_\kappa$ of the moment map: we need this hypothesis to apply \cite[Lemma 3.4]{AM}.)
\item[(2)] Find an appropriate convex quadratic Hamiltonian $H_\lambda$ on $\C^d$ such that, for any
simple periodic orbit $\tgamma_\lambda$ of the linear flow on $\C^d$ generated by the quadratic Hamiltonian
\[
Q_\lambda := H_\lambda - F_\lambda\,,
\]
we have 
\[
\rs (\tgamma) \geq \rs (\tgamma_\lambda) - n
\]
whenever $\gamma$ is non-degenerate.
\item[(3)] Combining the previous two steps we have that (cf. Proposition~\ref{prop:estimate} below)
\[
\czl (\gamma) \geq \rs (\tgamma_\lambda) - n
\]
and Theorem \ref{thm:convex} follows by applying Proposition~\ref{prop:estimate2} to the periodic flow induced
on $(M,\xi)$ by reduction of $Q_\lambda$.
\end{itemize}

So fix from now on a periodic orbit $\ga$ of $\alpha$. Suppose, without loss of generality, that the period of $\ga$ is one. Denote by $\gamma'$ one of the possible lifts of $\ga$ to $Z\subset \C^{d}\setminus\{0\}$ as an orbit of the
Hamiltonian flow of $H_\alpha$. Such a lift $\gamma'$ is not necessarily a closed orbit but we know
that there exists $\lambda\in K$ such that
\[
\gamma' (1) = \lambda\cdot\gamma' (0)\,.
\]
Moreover, for a suitably normalized $\eta_\lambda\in\fk$, the Hamiltonian $F_\lambda :\C^d \to \R$ 
defined by
\[
F_\lambda (z) = \langle F(z) , \eta_\lambda \rangle\,,
\]
where $\langle \cdot,\cdot \rangle$ denotes the natural pairing between $\fk$ and $\fk^\ast$,
has a time-$1$ Hamiltonian flow $\Phi_{F_\lambda}:\C^d \to \C^d$ given by
$\Phi_{F_\lambda} (z ) = \lambda \cdot z\,, \ \forall\ z\in\C^d$. We can then consider the
Hamiltonian
\[
H_{\alpha,\lambda} := H_\alpha - F_\lambda \,.
\]
Since $H_\alpha$ is $K$-invariant, we have that $H_\alpha$ and $F_\lambda$ Poisson
commute, i.e. $\{H_\alpha, F_\lambda\} = 0$, and the time-$1$ flow of $H_{\alpha,\lambda}$
is given by
\[
\Phi _{H_{\alpha,\lambda}} = \left(\Phi_{F_\lambda}\right)^{-1} \circ  \Phi_{H_\alpha}\,.
\]
This means that
\[
\Phi _{H_{\alpha,\lambda}} (\gamma'(0)) = 
\left(\Phi_{F_\lambda}\right)^{-1} (\Phi_{H_\alpha} (\gamma'(0))) =
\left(\Phi_{F_\lambda}\right)^{-1} (\lambda \cdot \gamma'(0)) =
\lambda^{-1} \cdot \lambda \gamma'(0) = \gamma'(0) \,.
\]
Hence, the Hamiltonian flow $\Phi _{H_{\alpha,\lambda}}^t$ of $H_{\alpha,\lambda}$ has a 
periodic orbit which is a lift of $\gamma\in\Pp_\alpha$. We will denote by $\tgamma$ this closed 
lift of $\gamma$. 

To prove Theorem \ref{thm:convex} we need to obtain an appropriate estimate
for the Robbin-Salamon index of $\tgamma$, which is equal to the index of the path of
linear symplectic matrices given by
\[
d (\Phi_{-F_\lambda}^t)\circ d (\Phi_{H_\alpha}^t )\,. 
\]
This path is homotopic with fixed end points to the juxtaposition $d (\Phi_{H_\alpha}^t)
\ast d (\Phi_{-F_\lambda}^t)$, i.e. $d (\Phi_{-F_\lambda}^t)$ followed by $d (\Phi_{H_\alpha}^t)$.
An explicit homotopy, parametrized by $s\in [0,1]$, is given by the following formula:
\[
\begin{cases}
d (\Phi_{-F_\lambda}^{(1+s)t})\circ d (\Phi_{H_\alpha}^{(1-s)t} )\,,\ t\in \left[0,1/2\right] \\
d (\Phi_{H_\alpha}^{(1+s)t-s})  \circ d (\Phi_{-F_\lambda}^{(1-s)t + s}) \,,\ t\in \left[1/2, 1\right]\,.
\end{cases}
\]
This means that the Robbin-Salamon index of $\tgamma$ is given by
\begin{equation} \label{eq:index1}
\begin{split}
\rs(\tgamma) & = 
\rs (d (\Phi_{H_\alpha}^t) \ast d (\Phi_{-F_\lambda}^t)) \\
& = \czl(d (\Phi_{H_\alpha}^t) \ast d (\Phi_{-F_\lambda}^t)) + (\dim K) + 1 + \frac{\nu(\gamma)}{2}
\end{split}
\end{equation}
where the second equality follows from convexity of $H_\alpha$ and
\begin{itemize}
\item $ \czl$ is defined and discussed in Section~\ref{sec:CZ}. It is the lower 
semicontinuous extension of the Conley-Zehnder index and coincides with the index 
defined by Hofer-Wysocki-Zehnder~\cite{HWZ} and Long~\cite{Lon02} (cf.~\eqref{eq:hwz=czl}).
Since $H_\alpha$ is convex, it follows from~\eqref{eq:index_monotonicity2} 
that $\czl (d (\Phi_{H_\alpha}^t) \ast d (\Phi_{-F_\lambda}^t))$
is equal to the Robbin-Salamon index of this juxtaposed path of linear symplectic matrices right before
it reaches its endpoint.
\item $(\dim K) + 1 =$ half the dimension of the $1$-eigenspace due to $K$-equivariance and the
autonomous Hamiltonian vector field.
\item $\nu(\gamma) =$ dimension of the $1$-eigenspace due to possible degeneracy of $\gamma$.
\end{itemize}

To obtain the appropriate index estimate, we will compare the above juxtaposed path with the following
one, given by quadratic Hamiltonians. Write $\lambda\in K \subset\T^d$ as $\lambda = (\lambda_1, \ldots,\lambda_d)$, with
$0<\lambda_i\leq 2\pi\,,\ i=1,\ldots,d$, and consider the \emph{convex} quadratic Hamiltonian
$H_\lambda :\C^d \to \R$ given by
\[
H_\lambda (z_1, \ldots, z_d) = \frac{1}{2} \sum_{i=1}^d \lambda_i  |z_i|^2\,.
\]
Denote its time-$1$ Hamiltonian flow by $\Phi_{H_\lambda}:\C^d \to \C^d$.  We have
that $\Phi_{H_\lambda} (z ) = \lambda \cdot z\,, \ \forall\ z\in\C^d$. The Hamiltonian flow of 
$H_\lambda$ obviously commutes with the action of $K$ and induces a flow on the contact 
manifold $M$, which is the Reeb flow of the toric Reeb vector field
\[
R_\lambda := \sum_{i=1}^d \lambda_i \nu_i \,.
\]
Note that, since $\lambda\in K$, it follows from~(\ref{eq:K}) that $R_\lambda/2\pi$ is an integral
toric Reeb vector, i.e. $R_\lambda/2\pi \in\Z^{n+1}$. We can now consider the quadratic Hamiltonian
\[
Q_{\lambda} := H_\lambda - F_\lambda \,.
\]
Its flow $\Phi_{Q_\lambda}^t = \Phi_{H_\lambda}^t \circ \Phi_{-F_\lambda}^t$ is a lift to $\C^d$ of 
the Reeb flow of $R_\lambda$ and satisfies
\[
\Phi_{Q_\lambda}^1 (z) = \Phi_{H_\lambda}^1 \left( \Phi_{-F_\lambda}^{1} (z) \right)
= \lambda \cdot \frac{1}{\lambda} \cdot z = z \,.
\]
The associated path $d (\Phi_{-F_\lambda}^t)\circ d (\Phi_{H_\lambda}^t )$ of linear symplectic
matrices is also homotopic with fixed end points to the juxtaposition $d (\Phi_{H_\lambda}^t) \ast 
d (\Phi_{-F_\lambda}^t)$ and its Robbin-Salamon index, which is also the Robbin-Salamon index
of any $1$-periodic orbit $\tgamma_\lambda$ of $\Phi_{Q_\lambda}^t$, is given by
\begin{equation} \label{eq:index2}
\rs (\tgamma_\lambda) = \czl (d (\Phi_{H_\lambda}^t) \ast d (\Phi_{-F_\lambda}^t)) +
(\dim K) + 1 + n \,.
\end{equation}

\begin{lemma} \label{lem:index3}
\[
\czl (d (\Phi_{H_\lambda}^t) \ast d (\Phi_{-F_\lambda}^t)) =
\czl (d (\Phi_{\delta H_\lambda}^t) \ast d (\Phi_{-F_\lambda}^t))\,, \ 
\text{for any $0<\delta\leq 1$}.
\]
\end{lemma}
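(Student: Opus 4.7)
The plan is to use the analytical characterization $\czl = \hwz$ from~\eqref{eq:hwz=czl}, combined with the spectral monotonicity~\eqref{eq:index_monotonicity1}--\eqref{eq:index_monotonicity2}. Write $\Gamma_\delta := d(\Phi_{\delta H_\lambda}^t) \ast d(\Phi_{-F_\lambda}^t)$ and let $A_\delta$ denote the path of symmetric matrices determined by $\dot\Gamma_\delta = J_0 A_\delta \Gamma_\delta$. A direct reparametrization computation shows that $A_\delta$ is piecewise constant with
\[
A_\delta(t) = -2\,\Hess(F_\lambda)\ \text{on}\ [0,1/2] \quad\text{and}\quad A_\delta(t) = 2\delta\,\Hess(H_\lambda)\ \text{on}\ [1/2,1].
\]

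Since $H_\lambda$ is strictly convex, $\Hess(H_\lambda)$ is positive definite, and for any $\delta \in (0,1)$ the difference $A_1(t) - A_\delta(t)$ vanishes on $[0,1/2]$ and equals $2(1-\delta)\Hess(H_\lambda)$ on $[1/2,1]$; it is nonnegative everywhere and strictly positive on a subinterval of positive measure. By~\eqref{eq:index_monotonicity2} this yields the strict inequality $\lambda_k(A_\delta) > \lambda_k(A_1)$ for every $k \in \Z$.

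The central step is to verify that $\Gamma_\delta(1)$ has no eigenvalue equal to $1$ whenever $\delta \in (0,1)$. Both $H_\lambda$ and $F_\lambda$ are simultaneously diagonalized in the standard $\T^d$-eigenbasis of $\C^d$ and hence commute, so $\Gamma_\delta(1) = d(\Phi_{\delta H_\lambda}^1) \circ d(\Phi_{-F_\lambda}^1)$ acts on the $j$-th complex line by multiplication by $e^{i(\delta\lambda_j - \eta_{\lambda,j})}$. Because $e^{i\eta_{\lambda,j}} = \lambda_j$ with $\lambda_j \in (0,2\pi]$, the phase reduces modulo $2\pi$ to $(\delta-1)\lambda_j$, which for $\delta \in (0,1)$ lies in $(-2\pi,0)$ and thus avoids $2\pi\Z$ entirely. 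Consequently $\Gamma_\delta(1)$ has no $1$-eigenvalue, and since $\lambda_k(A_\delta) = 0$ is equivalent to $1$ being an eigenvalue of $\Gamma_\delta(1)$, this forces $\lambda_k(A_\delta) \neq 0$ for all $k \in \Z$ and all $\delta \in (0,1)$.

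To conclude, fix any $k \in \Z$. The function $\delta \mapsto \lambda_k(A_\delta)$ is continuous on $(0,1]$ and does not vanish on $(0,1)$. If $\lambda_k(A_1) < 0$, then $\lambda_k(A_\delta) < 0$ throughout $(0,1)$ since by continuity the sign cannot change; if $\lambda_k(A_1) = 0$, then $\lambda_k(A_\delta) > 0$ on $(0,1)$ by the strict inequality; and if $\lambda_k(A_1) > 0$, the same strict inequality keeps $\lambda_k(A_\delta)$ positive. Hence $\{k : \lambda_k(A_\delta) < 0\} = \{k : \lambda_k(A_1) < 0\}$, which gives $\hwz(\Gamma_\delta) = \hwz(\Gamma_1)$ and therefore $\czl(\Gamma_\delta) = \czl(\Gamma_1)$. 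The main obstacle is the endpoint computation establishing non-degeneracy of $\Gamma_\delta$ for $\delta \in (0,1)$; once that is in hand, the rest follows formally from strict spectral monotonicity driven by convexity of $H_\lambda$.
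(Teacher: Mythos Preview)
Your proof is correct and rests on the same key observation as the paper's: the endpoint $\Gamma_\delta(1)$ is non-degenerate for every $\delta\in(0,1)$, which you verify via the phase computation $(\delta-1)\lambda_j\in(-2\pi,0)$. The paper phrases this same fact as ``the path $d(\Phi_{H_\lambda}^{2t-1})\circ d(\Phi_{-F_\lambda}^1)$ has no crossing for $1/2<t<1$'', since $\Gamma_\delta$ is just $\Gamma_1$ truncated at $t=\tfrac{1}{2}(1+\delta)$.

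Where the two arguments diverge is in drawing the conclusion. The paper reasons directly with crossings: no crossings on $(1/2,1)$ and only positive crossings at $t=1$ (by convexity of $H_\lambda$) mean, via lower semicontinuity of $\czl$, that cutting the path short anywhere in the second half does not change $\czl$. You instead translate everything into the analytical picture $\czl=\hwz$ and argue with the eigenvalues $\lambda_k(A_\delta)$: strict spectral monotonicity~\eqref{eq:index_monotonicity2} from convexity gives $\lambda_k(A_\delta)>\lambda_k(A_1)$, non-degeneracy forces $\lambda_k(A_\delta)\ne 0$, and continuity in $\delta$ then pins down the sign of each $\lambda_k(A_\delta)$. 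This is a more explicit and self-contained realization of the same mechanism; the paper's proof is terser but leaves the reader to unpack what ``lower semicontinuity plus positive terminal crossings'' really delivers, which is precisely the spectral argument you wrote out.
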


\begin{proof}
The fact that
\[
\Phi_ {H_\lambda}^t (z) = d (\Phi_{H_\lambda}^t) (z) =
(e^{it\lambda_1 /2\pi} z_1, \ldots, e^{it\lambda_d /2\pi} z_d)\,,\
\text{with $0 < \lambda_i \leq 2\pi$, $i=1,\ldots,d$,}
\]
implies that the path
\[
d (\Phi_{H_\lambda}^{2t-1}) \circ d (\Phi_{-F_\lambda}^1) =
(e^{i(t-1) \lambda_1 /\pi} z_1, \ldots, e^{i(t-1) \lambda_d /\pi} z_d)
\]
has no crossing, i.e. no eigenvalue $1$, when $1/2 < t < 1$. This, together with
the convexity of $H_\lambda$ (which implies that all crossings at $t=1$ are positive)
and the lower semicontinuity property of $\mu_{CZ}^-$, implies the result.
\end{proof}

\begin{proposition} \label{prop:estimate}
We have that
\[
\czl (\gamma) \geq \rs (\tgamma_\lambda) - n \,,
\]
where $\tgamma_\lambda$ denotes any $1$-periodic orbit  of the
linear flow $\Phi_{Q_\lambda}^t$ on $\C^d$ generated by the
quadratic Hamiltonian $Q_\lambda = H_\lambda - F_\lambda$.
\end{proposition}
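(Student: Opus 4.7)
The plan is to combine the convexity of $H_\alpha$ with the monotonicity of the Hofer--Wysocki--Zehnder lower semicontinuous index $\czl$ established in Section \ref{sec:CZ}, and then to feed the result into the two explicit index formulas \eqref{eq:index1} and \eqref{eq:index2} along with Lemma \ref{lem:index3}.

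First, I would set up a Hessian comparison. Since $H_\alpha$ is convex on a neighborhood $U$ of $Z$ and the lifted orbit $\gamma'$ is compact and contained in $Z$, positive definiteness of $d^2H_\alpha$ along $\gamma'$ together with the boundedness of $d^2H_\lambda = \mathrm{diag}(\lambda_1,\ldots,\lambda_d)$ produces a constant $\delta_0\in(0,1]$ such that
\begin{equation*}
d^2 H_\alpha(\gamma'(t)) \;\geq\; \delta_0\, d^2 H_\lambda \qquad \text{for all } t\in[0,1],
\end{equation*}
in the sense of symmetric matrices. Translating this into the paths $A_\alpha(t), A_{\delta_0\lambda}(t)$ of symmetric matrices associated to $d\Phi_{H_\alpha}^t$ and $d\Phi_{\delta_0 H_\lambda}^t$ via \eqref{eq:symm_path} yields $A_\alpha(t)\geq A_{\delta_0\lambda}(t)$ pointwise. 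Since the second halves of the juxtaposed paths are identical (both equal to $d\Phi_{-F_\lambda}^t$), the monotonicity property \eqref{eq:index_monotonicity1} of the eigenvalue sections $\lambda_k$, coupled with the identification $\czl=\hwz$ from \eqref{eq:hwz=czl}, gives
\begin{equation*}
\czl\bigl(d\Phi_{H_\alpha}^t \ast d\Phi_{-F_\lambda}^t\bigr) \;\geq\; \czl\bigl(d\Phi_{\delta_0 H_\lambda}^t \ast d\Phi_{-F_\lambda}^t\bigr).
\end{equation*}
By Lemma \ref{lem:index3}, the right-hand side equals $\czl(d\Phi_{H_\lambda}^t \ast d\Phi_{-F_\lambda}^t)$.

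Next, I would rewrite both sides in terms of Robbin--Salamon indices of the lifts. Formula \eqref{eq:index2} gives $\czl(d\Phi_{H_\lambda}^t\ast d\Phi_{-F_\lambda}^t) = \rs(\tgamma_\lambda)-(\dim K+1)-n$, while \eqref{eq:index1} gives $\czl(d\Phi_{H_\alpha}^t\ast d\Phi_{-F_\lambda}^t) = \rs(\tgamma)-(\dim K+1)-v(\gamma)/2$. The remaining step is to identify $\czl(\gamma)$ with $\czl(d\Phi_{H_\alpha}^t\ast d\Phi_{-F_\lambda}^t)+(\dim K+1)$; equivalently, to establish the Robbin--Salamon identity
\begin{equation*}
\rs(\gamma) \;=\; \rs(\tgamma)
\end{equation*}
for the closed lift $\tgamma\subset \C^d$ of $\gamma$. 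This follows, as in \cite[Lemma 3.4]{AM}, from the symplectic reduction $W_C = Z/K$: along $\tgamma$, the tangent bundle decomposes $K$-equivariantly into the subspace projecting isomorphically onto the contact distribution $\xi_\gamma$ (on which the composed flow equals the linearized Reeb flow) and a $2(\dim K+1)$-dimensional complement spanned by the tangent directions of the $K$-orbit, its symplectic dual $\nabla F$-directions, the Reeb direction and the Liouville direction. On the complement the composed path is a loop at the identity of Maslov index zero, so under passage to $\rs$ all the contribution is concentrated on $\xi_\gamma$. Combining $\czl(\gamma)=\rs(\gamma)-v(\gamma)/2$ with the two identities above yields the claimed relation, and stringing together the inequalities gives $\czl(\gamma)\geq \rs(\tgamma_\lambda)-n$.

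The main obstacle will be the last identification: verifying carefully that the $2(\dim K+1)$ extra trivial degenerate directions contribute exactly $(\dim K+1)$ to the Robbin--Salamon index difference between the lifted and contact pictures, which amounts to a coherent choice of trivializations respecting the reduction. The Hessian comparison and monotonicity step is more routine given the machinery set up in Section \ref{sec:CZ}, though one should check that the comparison extends from the orbit to the tubular neighborhood needed to apply \eqref{eq:index_monotonicity1} (which follows by shrinking $\delta_0$ if necessary).
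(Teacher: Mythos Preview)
Your proof is correct and follows essentially the same route as the paper: Hessian comparison via convexity, monotonicity of $\czl$ from \eqref{eq:index_monotonicity1}--\eqref{eq:index_monotonicity2}, Lemma~\ref{lem:index3}, formulas \eqref{eq:index1}--\eqref{eq:index2}, and the reduction identity $\rs(\gamma)=\rs(\tgamma)$. The only minor difference is that the paper first reduces to the non-degenerate case (so $v(\gamma)=0$), whereas you carry $v(\gamma)$ along via the relation $\czl(\gamma)=\rs(\gamma)-v(\gamma)/2$; note that this relation is not automatic but also relies on convexity (positive-definite crossing form at the endpoint of $\Gamma_\gamma$), a point you should make explicit.
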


\begin{proof}
Suppose initially that $\gamma$ is non-degenerate. We have from~(\ref{eq:index1}) that
\[
\czl(\ga) = \rs(\ga) = \rs (\tgamma) = \czl (d (\Phi_{H_\alpha}^t) \ast d (\Phi_{-F_\lambda}^t)) +
(\dim K) + 1\,.
\]
Convexity of $H_\alpha$ implies that there exists a sufficiently small $\delta >0$ such that
\[
d^2_p (H_\alpha) > d^2_p (\delta H_\lambda)\,,\ \text{for any $p\in\tgamma$.}
\]
It then follows from~(\ref{eq:index_monotonicity2}) that
\[
\czl (d (\Phi_{H_\alpha}^t) \ast d (\Phi_{-F_\lambda}^t)) \geq 
\czl (d (\Phi_{\delta H_\lambda}^t) \ast d (\Phi_{-F_\lambda}^t)) \,.
\]
The result now follows from Lemma~\ref{lem:index3} and~(\ref{eq:index2}).

Finally, let us consider the case that $\ga$ is degenerate. Given a symplectic path $\Gamma \in \P(2n)$ one can always make a small $C^\infty$-perturbation of $\Gamma$ such that the new path $\Gamma'$ is non-degenerate and satisfies $\rs(\Gamma') = \czl(\Gamma)$; see \cite{Lon02}. Therefore, one can make a small perturbation of the path $d (\Phi_{H_\alpha}^t)$ to a path $\tilde\Gamma$ (commuting with the action of $K$) such that $\rs(\tilde\Gamma \ast d (\Phi_{-F_\lambda}^t)) = \czl(\gamma)$ and $\nu(\tilde\Gamma(1) \circ d (\Phi_{-F_\lambda}^1)) =  \dim K + 1$ (which means that the reduced symplectic path in $\R^{2n}$ is non-degenerate). Then it follows from \eqref{eq:index1} and the previous argument that
\[
\czl(\gamma) = \rs(\tilde\Gamma \ast d (\Phi_{-F_\lambda}^t)) = \czl(\tilde\Gamma \ast d (\Phi_{-F_\lambda}^t)) + (\dim K) + 1
\]
and
\[
\czl (\tilde\Gamma \ast d (\Phi_{-F_\lambda}^t)) \geq 
\czl (d (\Phi_{\delta H_\lambda}^t) \ast d (\Phi_{-F_\lambda}^t)) \,.
\]
This, together with Lemma~\ref{lem:index3} and~(\ref{eq:index2}), proves the result in the degenerate case.
\end{proof}

To complete the proof of Theorem \ref{thm:convex}, we can now apply the Robbin-Salamon index estimate 
given in Proposition~\ref{prop:estimate2} to the periodic flow induced on $(M,\xi)$ by the reduction of $\Phi_{Q_\lambda}^t$.


\begin{thebibliography}{aa}

\bibitem{AS} A. Abbondandolo, M. Schwarz, {\it On the Floer homology of cotangent bundles.} Comm. Pure Appl. Math. {\bf 59} (2006), 254--316.

\bibitem{AbMar} R. Abraham, J. Marsden, {Foundations of mechanics.} Second edition, revised and enlarged. With the assistance of Tudor Ratiu and Richard Cushman. Benjamin/Cummings Publishing Co., Inc., Advanced Book Program, Reading, Mass., 1978.

\bibitem{AM} M. Abreu, L. Macarini, {\em Contact homology of good toric contact manifolds.} Compositio Mathematica {\bf 148} (2012), 304-334.

\bibitem{Arn} M. Arnaud, {\it Existence d'orbites p\'eriodiques compl\`etement elliptiques des Hamiltoniens convexes pr\'esentant certaines sym\'etries.} C. R. Acad. Sci. Paris. {\bf 328} (1999), 1035--1038.

\bibitem{Aud} M. Audin, {\it Lagrangian skeletons, periodic geodesic flows and symplectic cuttings.} Manuscripta Math. {\bf 124} (2007), no. 4, 533--550. 

\bibitem{BTZ}  W.~Ballmann, G.~Thorbergsson, W.~Ziller, {\it Closed geodesics on positively curved manifolds.} Ann. of Math. {\bf 116} (1982), no. 2, 213--247.

\bibitem{Ben} G. Benedetti, {\it The contact property for nowhere vanishing magnetic fields on the two-sphere.} Ergodic Theory and Dynamical Systems {\bf 36}, no. 3, 682--713.

\bibitem{Bes} A. Besse, {Manifolds all of whose geodesics are closed.} Ergebnisse der Mathematik und ihrer Grenzgebiete, 93. Springer-Verlag, Berlin-New York, 1978.

\bibitem{BPS} P. Biran, L. Polterovich, D. Salamon, {\it Propagation in Hamiltonian dynamics and relative symplectic homology.} Duke Math. J. {\bf 119} (2003), no. 1, 65--118.

\bibitem{Bo1} F.~Bourgeois, \textit{A Morse-Bott approach to contact homology}, in ``Symplectic and Contact Topology: Interactions and Perspective'' 
(eds. Y.Eliashberg, B.Khesin and F.Lalonde), Fields Institute Communications {\bf 35}, American Mathematical Society (2003), 55--77.

\bibitem{Bo2} F.~Bourgeois, \textit{Introduction to contact homology.} Lecture notes of mini-course at Summer School in Berder on Holomorphic Curves
and Contact Topology, June 2003. Available at http://homepages.ulb.ac.be/fbourgeo/pspdf/Berder.pdf.

\bibitem{BEHWZ}F. Bourgeois, Y. Eliashberg, H. Hofer, K. Wysocki and E. Zehnder. {\it Compactness results in Symplectic Field Theory.} Geometry and Topology {\bf 7} (2004), 799--888.

\bibitem{BM} F. Bourgeois, K. Mohnke, {\it Coherent orientations in symplectic field theory.} Math. Z. {\bf 248} (2004), no. 1, 123--146. 

\bibitem{BO1} F.~Bourgeois, A.~Oancea, {\em An exact sequence for contact and symplectic homology.} Invent. Math. {\bf 175} (2009), 611---680. Erratum available at http://link.springer.com/article/10.1007/s00222-015-0587-7.

\bibitem{BO2} F. Bourgeois, A. Oancea, {\it $S^1$-equivariant symplectic homology and linearized contact homology.} Int. Math. Res. Notices,  doi:10.1093/imrn/rnw029.

\bibitem{CF} K. Cieliebak, U. Frauenfelder, {\it A Floer homology for exact contact embeddings.} Pacific J. Math. {\bf 239} (2009), no. 2, 251--316.

\bibitem{CO} G.~Contreras, F.~Oliveira, {\it $C^2$-densely the $2$-sphere has an elliptic closed geodesic.} Ergodic Theory Dynam. Systems {\bf 24} (2004), no. 5, 1395--1423.

\bibitem{CW} C. Croke, A. Weinstein, {\it Closed curves on convex hypersurfaces and periods of nonlinear oscillations.} Invent. Math. {\bf 64} (1981), no. 2, 199--202. 

\bibitem{DDE}  G.~Dell'Antonio, B.~D'Onofrio, I.~Ekeland, {\it Periodic solutions of elliptic type for strongly nonlinear Hamiltonian systems.} The Floer memorial volume, 327--333, Progr. Math., 133, BirkhŠuser, Basel, 1995.

\bibitem{Dra} D. Dragnev. \emph{Fredholm theory and transversality for noncompact pseudoholomorphic maps in symplectizations.} Comm. Pure Appl. Math. \textbf{57} (2004), no. 6, 726--763.

\bibitem{Eke86} I. Ekeland, {\it An index theory for periodic solutions of convex Hamiltonian systems.} Nonlinear functional analysis and its applications, Part 1 (Berkeley, Calif., 1983), 395--423, Proc. Sympos. Pure Math., {\bf 45}, Part 1, Amer. Math. Soc., Providence, RI, 1986.

\bibitem{Eke} I. Ekeland, {Convexity methods in Hamiltonian mechanics.} Ergebnisse der Mathematik und ihrer Grenzgebiete (3), 19. Springer-Verlag, Berlin, 1990.

\bibitem{EGH} Y.~Eliashberg, A.~Givental and H.~Hofer, {\it Introduction to Symplectic Field Theory.} Geom. Funct. Anal., Special volume, Part II (2000), 560--673.

\bibitem{GW1} J.~Gauntlett, D.~Martelli, J.~Sparks and D.~Waldram, \textit{Sasaki-Einstein metrics on $S\sp 2\times S\sp 3$.} Adv. Theor. Math. Phys. {\bf 8} (2004), 711--734. 

\bibitem{GGM} V. Ginzburg, B. Gurel, L. Macarini, {\it On the Conley Conjecture for Reeb Flows.} Internat. J. Math. {\bf 26} (2015), 1550047 (22 pages); doi: 10.1142/S0129167X15500470.

\bibitem{HP} A.~Harris, G.~Paternain, {\it Dynamically convex Finsler metrics and $J$-holomorphic embedding of asymptotic cylinders.}  Ann. Global Anal. Geom. {\bf 34} (2008), no. 2, 115--134.

\bibitem{Hat} A. Hatcher, {Algebraic topology.} Cambridge University Press, Cambridge, 2002.

\bibitem{HWZ} H. Hofer, K. Wysocki and E. Zehnder. \textit{The dynamics of strictly convex energy surfaces in $\R^4$.} Ann. of Math. {\bf 148} (1998), 197--289.

\bibitem{HWZ10} H. Hofer, K. Wysocki, E. Zehnder, {\it SC-smoothness, retractions and new models for smooth spaces.} Discrete Contin. Dyn. Syst., \textbf{28} (2010), 665--788.

\bibitem{HWZ11} H. Hofer, K. Wysocki, E. Zehnder, {\it Applications of polyfold theory I: The Polyfolds of Gromov--WittenTheory.} Preprint arXiv:1107.2097.

\bibitem{HMS} U. Hryniewicz, A. Momin, P. Salom\~ao. {\it A PoincarŽ-Birkhoff theorem for tight Reeb flows on $S^3$.} Invent. Math. {\bf 199} (2015), no. 2, 333--422.

\bibitem{HN} M. Hutchings, J. Nelson, {\it Cylindrical contact homology for dynamically convex contact forms in three dimensions.} Preprint arXiv:1407.2898. To appear in Journal of Symplectic Geometry.

\bibitem{Le1} E. Lerman, {\it Contact toric manifolds.} J. Symplectic Geom. {\bf 1} (2003), 785--828.

\bibitem{Le2} E.~Lerman, \textit{Homotopy groups of $K$-contact toric manifolds.} Trans. Amer. Math. Soc. {\bf 356} (2004), 4075--4083.

\bibitem{Li} H. Li, {\it Certain Circle Actions on K\"ahler Manifolds.} Int. Math. Res. Notices {\bf 18} (2014). 

\bibitem{Lon99} Y. Long. \textit{Bott formula of the {M}aslov-type index theory.} Pacific J. Math. {\bf 187} (1999), 113--149.

\bibitem{Lon00} Y. Long. {\it Precise iteration formulae of the Maslov-type index theory and ellipticity of closed characteristics.} Adv. Math. {\bf 154} (2000), no. 1, 76--131.

\bibitem{Lon02} Y. Long. {Index theory for symplectic paths with applications.} Birkh\"auser, Basel, 2002.

\bibitem{LZ} Y. Long, C. Zhu, {\it Closed characteristics on compact convex hypersurfaces in $\R^{2n}$}. Ann. of Math. {\bf 155} (2002), no. 2, 317--368.

\bibitem{Mac} L. Macarini, {\it Hofer-Zehnder capacity and Hamiltonian circle actions.} Commun. Contemp. Math. {\bf 6} (2004), no. 6, 913--945.

\bibitem{MSY} D.~Martelli, J.~Sparks and S.-T.~Yau, \textit{The geometric dual of $a$-maximisation for toric Sasaki-Einstein manifolds.} Comm. Math. Phys. {\bf 268} (2006), 39--65. 

\bibitem{MS} D. McDuff, D. Salamon, {J-holomorphic curves and symplectic topology.} American Mathematical Society Colloquium Publications, 52. American Mathematical Society, Providence, RI, 2012.

\bibitem{Mor} M. Morse, {\it The existence of polar non-degenerate functions on differentiable manifolds.} Ann. of Math. {\bf 71} (1960), no. 2, 352--383.
 
\bibitem{Po} H. Poincar\'e, {\it Sur les lignes g\'eod\'esiques des surfaces convexes.} Trans. Amer. Math. Soc. {\bf 6} (1905), 237--274.

\bibitem{Rad04} H.~Rademacher, {\it A sphere theorem for non-reversible Finsler metric.} Math. Ann. {\bf 328} (2004), 373--387.

\bibitem{Rad07} H.~Rademacher, {\it Existence of closed geodesics on positively curved Finsler manifolds.} Ergodic Theory Dynam. Systems {\bf 27} (2007), no. 3, 957--969.

\bibitem{RS} J.~Robbin, D.~Salamon, {\it The Maslov index for paths.} Topology {\bf 32} (1993), 827--844. 

\bibitem{SZ} D. Salamon, E. Zehnder, {\it Morse theory for periodic solutions of Hamiltonian systems and the Maslov index.} Comm. Pure Appl. Math. {\bf 45} (1992), no. 10, 1303--1360.

\bibitem{Wan13} W. Wang, {\it Stability of closed characteristics on symmetric compact convex hypersurfaces in $\R^{2n}$.} J. Math. Pures Appl. {\bf 99} (2013), no. 3, 297--308.

\bibitem{Wan14} W. Wang, {\it Irrationally elliptic closed characteristics on compact convex hypersurfaces in $\R^6$.} J. Funct. Anal. {\bf 267} (2014), no. 3, 799--841.

\bibitem{Web} J. Weber, {\it Perturbed closed geodesics are periodic orbits: index and transversality.} Math. Z. {\bf 241} (2002), no. 1, 45--82.

\end{thebibliography}
\end{document}